\documentclass[11pt,reqno,oneside]{amsart}
\usepackage{amsmath, amsthm, amssymb}

\usepackage{tensor}
\usepackage{bbm}
\usepackage{geometry}
\usepackage{soul}
\usepackage[pdftex]{graphicx}
\usepackage{float}
\usepackage{subfig}
\usepackage[english]{babel}
\usepackage{tikz} 
\usepackage{setspace}
\usetikzlibrary{arrows,decorations.pathreplacing,decorations.markings,shapes.geometric}
\tikzset{
    bt/.style={draw=blue,thick},
    ns/.style={circle,draw=blue,fill=blue, inner sep=0pt, minimum size=2mm},
    string/.style={draw=#1, postaction={decorate}, decoration={markings,mark=at position .45 with {\arrow[blue]{triangle 60}}}},
    doublestring/.style={draw=#1, postaction={decorate}, decoration={markings, mark=at position .7 with {\arrow[blue]{triangle 60}}, 
    mark=at position .3 with {\arrowreversed[blue]{triangle 60}}}},
    costring/.style={draw=#1, postaction={decorate}, decoration={markings,mark=at position .55 with {\arrow[draw=#1]{<}}}},
    arr/.style={string=blue, thick},
    doublearr/.style={doublestring=blue, thick},
    lin/.style={blue},
    dlin/.style = {blue, dashed, thick},
    dot/.style={circle,draw=#1,fill=#1,inner sep=1pt},
}

\usepackage{dsfont}
\usepackage{color}
\usepackage[color,matrix,arrow]{xy}
\graphicspath{{graphics/}}
\geometry{a4paper, textwidth=15cm, textheight=25cm}
\numberwithin{equation}{section}
\usepackage{fancyhdr}
\pagestyle{fancy} 
\fancyhf{} 
\fancyhead[L]{} 
\fancyhead[C]{} 
\fancyhead[R]{\thepage} 

\theoremstyle{definition}

\newtheorem{Thm}{Theorem}[section]
\newtheorem{Cor}{Corollary}[section]
\newtheorem{Prop}{Proposition}[section]
\newtheorem{Lemma}{Lemma}[section]
\newtheorem{Rem}{Remark}[section]
\newtheorem{Def}{Definition}[section]

\newtheorem{Not}{Notation}[section]

\newcommand{\sh}[1]{\mathcal{#1}}

\DeclareMathOperator{\id}{id}

\DeclareMathOperator{\wt}{wt}

\DeclareMathOperator{\Hol}{Hol}

\DeclareMathOperator{\Uea}{\mathcal{U}}

\DeclareMathOperator{\Sym}{S}

\DeclareMathOperator{\SL}{SL}

\DeclareMathOperator{\C}{\mathbb{C}}
\DeclareMathOperator{\R}{\mathbb{R}}
\DeclareMathOperator{\Z}{\mathbb{Z}}

\DeclareMathOperator{\End}{End}

\DeclareMathOperator{\Lie}{Lie}

\DeclareMathOperator{\Proj}{\mathbb{P}}

\DeclareMathOperator{\Hom}{Hom}
\DeclareMathOperator{\length}{\ell}
\DeclareMathOperator{\codim}{codim}

\DeclareMathOperator{\Gm}{\mathbb{G}_m}


\DeclareMathOperator{\Spec}{Spec}

\DeclareMathOperator{\Ho}{H}

\DeclareMathOperator{\charv}{char}
\DeclareMathOperator{\supp}{supp}
\DeclareMathOperator{\Dual}{\mathbb{D}}

\DeclareMathOperator{\Aff}{\mathbb{A}}

\DeclareMathOperator{\MVerma}{M}

\DeclareMathOperator{\RVerma}{R}
\DeclareMathOperator{\Rderived}{R}
\DeclareMathOperator{\modulecat}{mod}

\DeclareMathOperator{\thin}{thin}

\DeclareMathOperator{\veesl2}{\vee \mathfrak{sl}_2}
\DeclareMathOperator{\Loop}{L}
\DeclareMathOperator{\SU}{SU}

\usepackage[pdftex,
bookmarks=true,
bookmarksnumbered=true,
hypertexnames=false,
colorlinks = true,
linkcolor = black,
citecolor = black,
urlcolor = black]{hyperref}

\title{Relaxed highest weight modules from $\sh{D}$-modules on the Kashiwara flag scheme}

\begin{document}

\author{Claude Eicher}
\address{SWITZERLAND}
\email{claudeeicher@gmail.com}
\date{\today}
\setcounter{tocdepth}{3}
\maketitle

\begin{abstract}
The relaxed highest weight representations introduced by Feigin et al. are
a class of representations of the affine Kac-Moody algebra $\widehat{\mathfrak{sl}_2}$,
which do not have a highest (or lowest) weight. We formulate a generalization of
this notion for an arbitrary affine Kac-Moody algebra $\mathfrak{g}$ with respect to
an arbitrarily chosen simple root. We then realize induced $\mathfrak{g}$-modules
of this type and their duals as global sections of twisted $\sh{D}$-modules on the Kashiwara flag scheme
$X$ associated to $\mathfrak{g}$. The $\sh{D}$-modules that appear in our construction are
direct images from subschemes of $X$ given by the intersection of finite dimensional
Schubert cells with their translate by a simple reflection. They are objects of
the category $\Hol(\lambda)$ of twisted holonomic $\sh{D}$-modules on $X$ introduced by
Kashiwara-Tanisaki. Besides the twist $\lambda$, they depend on a complex number
describing the monodromy of the local systems we construct on these intersections. 
We describe the global sections of the $*$-direct images as a module over the Cartan subalgebra
of $\mathfrak{g}$ and show that the higher cohomology vanishes. We obtain a complete
description of the cohomology groups of the direct images as $\mathfrak{g}$-modules in the following two cases.  
First, we address the case when the intersection is isomorphic to $\Gm$. This case can be reduced to considerations on the projective line. 
Second, we address the case of the $*$-direct image from an arbitrary intersection when the twist is regular antidominant
and the monodromy is trivial. For the proof of this case we introduce an auto-equivalence of $\Hol(\lambda)$ induced by the
automorphism of $X$ defined by a lift of a simple reflection. 
These results describe for the first time explicit non-highest weight
$\mathfrak{g}$-modules as global sections on the Kashiwara flag scheme and extend several results of Kashiwara-Tanisaki to the case of relaxed highest weight representations.   
\end{abstract}

\tableofcontents

\section{Introduction and Organization}

\setstretch{1.2}

The main result of this article\footnote{This article is based on the author's PhD thesis \cite{Eic16} written
under the supervision of G. Felder at ETH Zurich.} is to establish a geometric realization of 
certain relaxed highest weight $\mathfrak{g}$-modules on the Kashiwara flag scheme
in the setup of the seminal work \cite{KT95}. Here $\mathfrak{g}$ is an 
affine Kac-Moody algebra. 
In \autoref{sec:repthofaffinekmas}-\autoref{sec:relatedresults} we provide a 
thematic introduction. The layout of the article is described in \autoref{sec:matching}.  

\subsection{Relaxed highest weight representations}\label{sec:repthofaffinekmas}
Let $\overset{\circ}{\mathfrak{g}}$ be a finite dimensional semisimple Lie algebra over $\C$.
Affine Kac-Moody (Lie) algebras \cite{Kac90} are central extensions of the \emph{loop Lie algebra}
$\overset{\circ}{\mathfrak{g}}\otimes_{\C}\C[t,t^{-1}]$ of $\overset{\circ}{\mathfrak{g}}$ (plus extension
by a derivation) and certain twisted versions of this construction. 
Similar to the case of $\overset{\circ}{\mathfrak{g}}$, a class of representations of $\mathfrak{g}$ considered in the literature are
the highest weight representations. The highest weight representations
satisfying some finiteness conditions form the \emph{category $\sh{O}$} of $\mathfrak{g}$
of Bernstein-Gelfand-Gelfand (see \cite{BGG76} for
a definition in the finite dimensional case and \cite{Kum02} in the Kac-Moody case). 
We mention in passing that \cite{Fre07} defines the much larger category of smooth $\mathfrak{g}$-representations
inspired by the same-named notion for representations of $\overset{\circ}{G}(\mathbb{F}_q((t)))$, where
$\overset{\circ}{G}$ is a linear algebraic group with Lie algebra $\overset{\circ}{\mathfrak{g}}$. 
However, a description of it as explicit as in the case of category $\mathcal{O}$ is not available. 
The works \cite{SS97}, \cite{FST98},
\cite{FSST98} introduce a new type of representations of the simplest affine Kac-Moody algebra $\widehat{\mathfrak{sl_2}}$ satisfying a 
so-called \emph{relaxed highest weight condition}, requiring the subalgebra $\mathfrak{sl}_2\otimes_{\C} t\C[t]$
to act locally nilpotently on the representation in conjunction with additional finiteness conditions. This is therefore only slightly weaker than the usual highest weight condition. The first relaxed highest weight representations to look at
are the Verma-type (induced)
representations called \emph{relaxed Verma modules}. Their structure was shown, by determining the so-called embedding
diagrams \cite{SS97}[Section 3], to be reminiscent of the usual Verma modules for $\widehat{\mathfrak{sl_2}}$, albeit the relaxed Verma modules depend on one more complex parameter.
\par
A possible motivation for the study of relaxed highest weight representations comes from two-dimensional
conformal field theory, precisely the \emph{(chiral) WZW model for $\SU(2)$ and $\SL(2,\R)$}. 
The classical case is the case of $\SU(2)$ and nonnegative integer level $k$, where the spectrum of the theory is given
by the finitely many integrable representations of $\widehat{\mathfrak{sl}_2}$ at this level. 
In the more sophisticated case of rational level, this model is formulated algebraically and was investigated in
\cite{Gab01}, \cite{Rid10}, \cite{Rid11}, \cite{Fje11}, \cite{RW15} (for specific values of the level).
Namely, one considers certain representations
of a \emph{vertex operator algebra} defined as the simple quotient of
the vacuum representation of $\widehat{\mathfrak{sl}_2}$ at level $k$. These representations
are in particular $\widehat{\mathfrak{sl}_2}$-representations. 
That they are of highest weight is not required from the outset. What is however
required is that the category of these representations closes under the so-called fusion
and conjugation operation. In this way the above works arrive at the
conclusion that it is natural to take relaxed highest weight representations into account. Indeed,
\cite{RW15}[p. 629] write ``[...] the relaxed category $\mathcal{R}$ is a rich source of modules for
affine Kac-Moody (super)algebras that appears to be even more relevant to conformal field theory than the much more familiar category $\mathcal{O}$.''
\par
\emph{Going from $\widehat{\mathfrak{sl}_2}$ to a general affine Kac-Moody algebra} $\mathfrak{g}$ 
it is natural to relax the highest weight condition on its representations for an \emph{arbitrarily chosen simple root} $i$ of $\mathfrak{g}$.
Then the corresponding root vector (in the positive part of $\mathfrak{g}$) may act not locally nilpotently on
such a representation. This is our reason for calling the $\mathfrak{g}$-modules appearing in this
article relaxed highest weight modules. Our contribution on the side of representation theory is to provide a set of definitions of such $\mathfrak{g}$-modules.  
In \cite{Eic15} we define a class of $\mathfrak{sl}_2$-modules by explicit relations and based on this we
define induced $\mathfrak{g}$-modules from the minimal parabolic subalgebra of $\mathfrak{g}$ associated to $i$ in \autoref{sec:gmodulesRVermalambdalpha(veesl2)}. 
The latter are denoted by $\RVerma(\lambda,\mu)$ and $\RVerma(\lambda,\mu)^{\veesl2}$ and
depend on an element $\lambda$ in the weight lattice of $\mathfrak{g}$ and on a complex parameter $\mu$. The $\RVerma(\lambda,\mu)$ are analogues of the relaxed Verma modules of \cite{FST98}. 
These definitions suffice to describe the cohomology groups of sheaves we construct as $\mathfrak{g}$-modules.\par
All the above cited works deal with explicit constructions of relaxed highest weight
representations, but the \emph{categories} formed by relaxed highest weight representations await to be systematically explored
as it has been achieved for category $\sh{O}$ of $\mathfrak{g}$.  
Indeed, these categories are expected to have a similarly explicit description as category $\sh{O}$,
but their combinatorics should be slightly more involved, and this is one of the reasons why we advocate the
study of these categories. Some of the concrete tasks in this study
are to parametrize the simple objects, check the existence of a duality functor, determine when a relaxed Verma module
is irreducible, determine the Hom-spaces between relaxed Verma modules and the composition series of 
relaxed Verma modules. To try to solve them with the help of algebraic geometry is certainly a standard
approach and the results of this article can provide first steps towards this goal.

\subsection{Matching representation theory with geometry}\label{sec:matching}
Let us now describe our geometric realization of relaxed highest weight representations
on the Kashiwara flag scheme associated to an affine Kac-Moody algebra. Such a realization has been given for highest weight 
representations in the seminal work \cite{KT95}, which is the main
reference for this article.
It translates questions in representation theory to questions concerning sheaves of $\sh{D}$-modules,
 this translation working at the categorical level and not just for single objects.
An important difference
to the classical finite dimensional case is the absence of the Beilinson-Bernstein theorem \cite{BB81} (the subsequent discussion will however reveal that partial results are still valid in the affine case). We recall the setup of \cite{KT95} in \autoref{sec:setup}, in particular the notion of $\sh{D}$-modules on the Kashiwara flag scheme,
and introduce notation related to affine Kac-Moody algebras. 
\cite{KT95} consider twisted $\sh{D}$-modules constructed as direct images from the 
(finite dimensional) \emph{Schubert cells} $X_w$ in the Kashiwara flag scheme $X$ associated
to $\mathfrak{g}$. These are affine spaces indexed by the elements $w$ of the Weyl group $W$ of $\mathfrak{g}$. The basic reason why the Schubert cells enter the picture is
the fact that the positive part $\mathfrak{n}$ of $\mathfrak{g}$ acts locally nilpotently on highest
weight representations, which are to be constructed as global sections of these $\sh{D}$-modules. 
The twist is an arbitrary integral weight $\lambda$ of $\mathfrak{g}$, which determines a line bundle on $X$
and is a basic parameter in what follows. 
$\sh{D}$-module theory provides a $*$-direct image $\sh{B}_w(\lambda)$ from the subscheme $X_w$ as well as its holonomic dual, the $!$-direct image $\sh{M}_w(\lambda)$. 
The construction of these direct images crucially depends on the fact that the $X_w$ are affinely embedded into
 smooth ambient varieties.  
There is also a $!*$-direct image $\sh{L}_w(\lambda)$, which is by construction a simple object. 
$\sh{B}_w(\lambda)$ and $\sh{M}_w(\lambda)$ belong to the Serre subcategory
$\Hol_0(\lambda)$ generated by the $\sh{L}_v(\lambda)$, $v \in W$, inside the \emph{category $\Hol(\lambda)$
of holonomic $\lambda$-twisted (right) $\sh{D}$-modules on $X$} with support in some $\overline{X_w}$
introduced in \cite{KT95}. 
\par
We instead study in \autoref{ssec:intersection} the intersection $X_w \cap s_i X_w$ for $w$ such that $s_i w < w$,
where $s_i$ denotes the simple reflection associated to $i$ and $<$ denotes the Bruhat order.  It is isomorphic to $\Gm \times \Aff^{\length(w)-1}$ and the explicit form of the isomorphism given in Lemma \autoref{Lemma:intersectionofXwandXsiW} is relevant to our arguments.
In fact,  in the same way as the $X_w$ arise as orbits for the Iwahori group scheme $\Loop^+ I$ on the \emph{thin flag variety} $X^{\thin}=\Loop \overset{\circ}{G}/\Loop^+ I$ of \cite{BD91},
these intersections of Schubert cells arise
as \emph{orbits for the subgroup $\Loop^+ I \cap \dot{s_i}(\Loop^+ I)\dot{s_i}^{-1}$ of the Iwahori group scheme $\Loop^+ I$}. Here $\dot{s_i} \in \Loop \overset{\circ}{G}$ is a lift of $s_i$
and we assume $\mathfrak{g}$ untwisted. 
Thus, this stratification refines the stratification of $X^{\thin}$ by Schubert cells.
In the simplest case of $\mathfrak{g}=\widehat{\mathfrak{sl}_2}$ and $i=1$ this
orbit stratification can be illustrated by the diagram
\vskip 1mm
\begin{align}\label{eq:diagram}
\xymatrix{X_1 & \ar[l] X_{s_0} & \ar[l] \ar[dl] X_{s_0s_1} & \ar[l] \ar[dl] X_{s_0s_1s_0} \\
& \ar[dl] \ar[ul] X_{s_1}\cap s_1 X_{s_1}  & \ar[ul] \ar[l] \ar[dl] X_{s_1s_0}\cap s_1X_{s_1s_0} & \ar[ul] \ar[l] \ar[dl]
X_{s_1s_0s_1}\cap s_1X_{s_1s_0s_1} & \dots \\
s_1 X_1 & \ar[l] s_1 X_{s_0} & \ar[ul] \ar[l]  s_1 X_{s_0s_1} & \ar[l] \ar[ul] s_1 X_{s_0s_1s_0}} 
\end{align}
\vskip 3mm

continuing infinitely to the right. 
In this diagram, the dimension of the orbits is $0,1,2,\dots$ from left to right and there is a sequence of arrows connecting
the orbit $A$ with the orbit $B$ if and only if $B \subseteq \partial A$ (with respect to the Zariski topology). We note that $s_1$ is obviously a symmetry of this diagram,
interchanging the orbit $X_w$ with $s_1 X_w$ and inducing an automorphism of $X_w \cap s_1 X_w$. The $\Loop^+I$-orbit $X_w$ splits into the two $\Loop^+ I \cap \dot{s_1}(\Loop^+ I)\dot{s_1}^{-1}$-orbits
$X_w \cap s_1 X_w$ and $s_1 X_{s_1 w}$ for $s_1 w < w$, while it is also a $\Loop^+ I \cap \dot{s_1}(\Loop^+ I)\dot{s_1}^{-1}$-orbit in
the complementary case. 
\par

We then describe the cohomology of the $\sh{D}$-modules
constructed as direct images from the subscheme $X_w \cap s_i X_w$, where again \emph{$i$ is arbitrary but fixed}. The above description of $X_w \cap s_i X_w$ as an orbit makes us expect that the Lie subalgebra
$\mathfrak{n} \cap s_i \mathfrak{n} = \mathfrak{n}_i$ of $\mathfrak{n}$, which consists
of all positive root spaces of $\mathfrak{g}$ except the one associated to the simple root $i$,
 will act locally nilpotently on these cohomology groups. They should thus yield relaxed highest
weight representations of $\mathfrak{g}$ in the sense of \autoref{sec:repthofaffinekmas} and their
explicit description as $\mathfrak{g}$-modules in \autoref{sec:identificationofHjR!*wlambdaalphaasgmoduleinsomecases}
 confirms this in interesting cases. 
In contrast to the Schubert cell $X_w$, the intersection $X_w \cap s_i X_w \cong \Gm \times \Aff^{\length(w)-1}$
is \emph{not simply connected}. The $\Gm$-factor supports nontrivial local systems, which we consider in their $\sh{O}$-module version
(since we are working in the $\sh{D}$-module setting). Namely, we consider the local
systems of rank one with monodromy $e^{2\pi i \mu}$, where $\mu$ is a complex parameter; this is
defined relative to a choice of coordinate on $\Gm$ (also, parameters differing by an integer amount to isomorphic
local systems). The holonomic dual of such a local system has parameter $-\mu$. 
These local systems on $\Gm$ yield \emph{local systems on $X_w \cap s_i X_w$}, still parametrized by
$\mu$, of which we then
form the $*$-direct image $\sh{R}_{*w}(\lambda,\mu)$ and its holonomic dual, the $!$-direct image
$\sh{R}_{!w}(\lambda,\mu)$, in \autoref{sec:constructionsforXwandsXw}. These
objects of $\Hol(\lambda)$ with support $\overline{X_w}$ can thus be compared with the
objects $\sh{B}_w(\lambda)$ and $\sh{M}_w(\lambda)$ of \cite{KT95}. Here it is crucial that $X_w \cap s_i X_w$, like
the Schubert cell, is affinely embedded into the smooth ambient varieties. 
We first describe the structure of the global sections of $\sh{R}_{*w}(\lambda,\mu)$
as a module over the Cartan subalgebra of $\mathfrak{g}$ in 
\autoref{sec:H0ofR*wlambdaalphaashmodule} and show in \autoref{sec:cohomologyvanishing} that $\sh{R}_{*w}(\lambda,\mu)$ has no higher cohomology. 
Later we prove in Corollary \autoref{Cor:cohvanishingforR!w(lambda)}(1) that the higher cohomology 
of $\sh{R}_{!w}(\lambda,\mu)$ vanishes for $\mu \in \Z$, general $w$ and $\lambda$ regular antidominant. 
\par

We introduce in \autoref{sec:exactautoequivwidetildesi*ofHol(lambda)} 
a functor $\widetilde{s_i}_*$. We need it in the course of the proof of one of the main theorems of this article, Theorem \autoref{Thm:Ho0R*wlambdaasgmod}, but in addition it has a nice geometric meaning. 
The functor $\widetilde{s_i}_*$ can, as the notation suggests, be thought of as
a $\sh{D}$-module direct image functor induced by the automorphism of $X$ given by the
action of the generator $\widetilde{s_i}$ of the \emph{Tits extension of $W$}.
$\widetilde{s_i}_*$ defines an auto-equivalence
of the category $\Hol(\lambda)$. The effect of $\widetilde{s_i}_*$ can already be seen on the
level of the support of the $\sh{D}$-module $\sh{M} \in \Hol(\lambda)$: $\supp \sh{M} \subseteq \overline{X_v}$ is translated to  $\supp(\widetilde{s_i}_*\sh{M}) \subseteq s_i\overline{X_v}$. This in particular shows that $\widetilde{s_i}_*$ does not preserve the subcategory $\Hol_0(\lambda)$ and that $\widetilde{s_i}_*$ realizes the symmetry of the diagram \eqref{eq:diagram} at the level of $\sh{D}$-modules. 
The $\mathfrak{g}$-action on the cohomology groups of $\widetilde{s_i}_*\sh{M}$ is
the twist $(\cdot)^{\widetilde{s_i}}$ by the automorphism of $\mathfrak{g}$ induced by $\widetilde{s_i}$ of the
$\mathfrak{g}$-action on the cohomology groups of $\sh{M}$ as we prove in Theorem \autoref{Thm:Hojofs_*M}. 
In accord with the fact that $\widetilde{s_i}_*$ does not preserve $\Hol_0(\lambda)$, this twist of a highest weight representation of $\mathfrak{g}$ is in general, e.g. in the
case of a Verma module, no longer of highest weight. 
It is also natural to introduce, with the help of the functor $\widetilde{s_i}_*$, 
the Serre subcategories $\Hol_0^{\RVerma}(\lambda)$ and $\Hol_0^{\RVerma \Z}(\lambda)$
of $\Hol(\lambda)$ similar to $\Hol_0(\lambda)$ and containing the latter. We
do this in \autoref{ssec:categoryHol0R(lambda)}.\par

In \autoref{sec:identificationofHjR!*wlambdaalphaasgmoduleinsomecases}
we identify the $\mathfrak{g}$-module structure of the cohomology of 
$\sh{R}_{*w}(\lambda,\mu)$ and $\sh{R}_{!w}(\lambda,\mu)$ in interesting cases. 
This includes a relaxed version of parts of the main theorem \cite{KT95}[Theorem 3.4.1]
(this theorem requires $\lambda$ to be regular antidominant). 
We describe in \autoref{ssec:thecasew=si} the cohomology of $\sh{R}_{*w}(\lambda,\mu)$
and $\sh{R}_{!w}(\lambda,\mu)$ in case $w=s_i$ and for all values of $\lambda$ and $\mu$. 
This description includes the identification of the parameter $\mu$, which is of geometric origin, with the same-named parameter of the analogue of the relaxed Verma module mentioned in \autoref{sec:repthofaffinekmas}. 
Based on the fact that $X_{s_i}\cap s_i X_{s_i}$, isomorphic to $\Gm$, sits inside the
Schubert curve $\overline{X_{s_i}}$, which is isomorphic to $\Proj^1$ and closed in $X$, this description 
can be reduced to the description of the cohomology of twisted $\sh{D}$-modules on $\Proj^1$
arising as extensions from $\Gm$. The solution of the latter task is the main result of \cite{Eic15}. 
In this way, the analysis of \autoref{ssec:thecasew=si} can be thought of as an elaborate version of
the identification of the cohomology of the skyscraper $\sh{D}$-module $\sh{B}_1(\lambda)=\sh{M}_1(\lambda)$, where the distinguished point $1$ in $X$ is replaced by the Schubert curve $\overline{X_{s_i}}$. We identify in Theorem \autoref{Thm:Ho0R*wlambdaasgmod} the global sections of $\sh{R}_{*w}(\lambda,\mu)$ for $w \in W$ general, $\mu \in \Z$ and $\lambda$ regular antidominant as a $\mathfrak{g}$-module
belonging to the class introduced in \autoref{sec:gmodulesRVermalambdalpha(veesl2)}. The proof is based on a (geometrically constructed) short exact sequence
\begin{align*}
0 \rightarrow \widetilde{s_i}_*\sh{B}_w(\lambda) \rightarrow \sh{R}_{*w}(\lambda,\mu) \rightarrow \sh{B}_{s_iw}(\lambda) \rightarrow 0
\end{align*}
in $\Hol(\lambda)$. Applying the above mentioned Theorem \autoref{Thm:Hojofs_*M} and \cite{KT95}[Theorem 3.4.1(i) and (ii)] yields injective $\mathfrak{g}$-maps from $\MVerma(s_i w \cdot \lambda)$ and its $\widetilde{s_i}$-twist 
$\MVerma(s_iw\cdot \lambda)^{\widetilde{s_i}}$ to the $\mathfrak{g}$-module dual to the global sections of $\sh{R}_{*w}(\lambda,\mu)$. 
Here $\cdot$ denotes a shifted action of $W$ on the weight lattice. An algebraic
statement (Proposition \autoref{Lemma:nonzerosubofRwdotlabmda=intersects}) about the $\mathfrak{g}$-modules defined in \autoref{sec:gmodulesRVermalambdalpha(veesl2)} then allows the
identification of the global sections as such a $\mathfrak{g}$-module. 

\subsection{Related results}\label{sec:relatedresults}
When $\mathfrak{g}$ is  instead a finite dimensional semisimple Lie algebra and the Schubert cells $X_w$ are considered, results analogous to
the identification of global sections as $\mathfrak{g}$-modules in \cite{KT95}[Theorem 3.4.1] appear in
\cite{Kem76}[Theorem 7], \cite{BK81}[equation (5.1.2) and Corollary 5.8] and \cite{Gai05}[Theorem 10.6], while the vanishing of
higher cohomology is part of the Beilinson-Bernstein theorem. 
Let us come back to the case of affine Kac-Moody algebras. 
While \cite{KT95}[Theorem 3.4.1] is related to $\mathfrak{g}$ at the negative level since $\lambda$ was assumed
regular antidominant, it is important to mention that its
positive level analogue exists in the form of \cite{Kas900}[Theorem 5.2.1]. There the twist $\lambda$ is required 
to be \emph{dominant} and direct images from the \emph{finite codimensional Schubert cells}  $X^w$ in the Kashiwara flag scheme $X$ are considered. 
It should be possible to carry over the constructions of this article to this case. 
In the case of the finite dimensional Schubert cells
$X_w$ in $X^{\thin}$, the more recent work \cite{FG09} gives the following reformulations. 
\cite{FG09}[Theorem 5.2] states a cohomology vanishing result following
\cite{BD91}, see also \cite{Sha10}[Proposition 5.3.1].  In \cite{FG09}[Theorem 5.5] an \emph{equivalence}
between the category of Iwahori-equivariant twisted $\sh{D}$-modules on $X^{\thin}$ (more precisely, its monodromic
version) and a \emph{block} of (a variant of) category $\sh{O}$ of $\mathfrak{g}$ is shown, combining the results of \cite{BD91} with those of \cite{KT95}. It seems reasonable to try to extend this equivalence to an equivalence between a category of $\Loop^+ I \cap \dot{s_i}\Loop^+ I\dot{s_i}^{-1}$-equivariant twisted $\sh{D}$-modules 
on $X^{\thin}$ and a category of relaxed highest weight modules
and the results of this article might find an application in such an endeavor. 

\subsection{Notation}
We indicate by a superscript in brackets, e.g. $\mathfrak{n}^{(-)}$, that both variants are considered
separately, with and without a superscript, in the example $\mathfrak{n}$ and $\mathfrak{n}^-$.
The comma is used similarly, see e.g Definition \autoref{Def:DmodR*!wlambdaalphal}. 
The term variety and scheme means algebraic variety and scheme over $\C$ respectively. 
Notation used in the main text:\\
\vskip 2mm
\begin{tabular}{ll}
$\sqcup$ & disjoint union\\
$\backslash$ & difference of sets or quotient by a group acting on the left\\
$e,h,f$ & standard basis of the Lie algebra $\mathfrak{sl}_2$\\
$\Uea L$ & universal enveloping algebra of the Lie algebra $L$\\
$\Sym V$ & symmetric algebra of the vector space $V$ \\
$V^*$ & dual of a finite dimensional vector space $V$\\
$\Lie G$ & Lie algebra of a linear algebraic group $G$ over $\C$\\
$\C_{\mu}$ & One dimensional $\mathfrak{h}$-module defined by $\mu \in \mathfrak{h}^*$\\
$L-\modulecat$ & category of left modules over a Lie algebra $L$\\
$\Aff^n$ & affine $n$-space\\
$\Gm$ & multiplicative group\\
$\sh{D}_X$ & sheaf of differential operators on a smooth variety $X$\\
$\Omega_X$ & right $\sh{D}_X$-module of top-degree forms on $X$\\
$f_{\cdot}, f^{-1}$ & direct and inverse image functor for sheaves of sets associated to\\
& a morphism $f$ of varieties\\
$f^*\sh{L}$ & pullback of the line bundle $\sh{L}$ by $f$\\
$\supp \sh{F}$ & support of the sheaf $\sh{F}$
\end{tabular} 
\vskip 2mm

\subsection{Acknowledgements}
I am indebted to my supervisor G. Felder for his attention to my work,
for letting me present the results of this article and for his comments on
the manuscript. I would also like to thank A.S. Khoroshkin and S. Arkhipov
for several important comments related to this work. 

\setstretch{1.4}

\section{Setup}\label{sec:setup}
We mostly follow \cite{KT95}[Section 1 and 2].

\subsection{Affine Kac-Moody algebras and associated group schemes}
 Let $(\mathfrak{h},(\alpha_i)_{i \in I}, (h_i)_{i \in I})$
be affine Kac-Moody data \cite{Kac90}. Here $I$ is a finite set. 
Let $\mathfrak{g}$ be the Kac-Moody Lie algebra constructed from this data. 
Let $\Phi \subseteq \mathfrak{h}^*$ be the set of roots of $\mathfrak{g}$. 
Let $\Phi^{>0}$ and $\Phi^{<0}=-\Phi^{>0}$ be the subset of positive and negative roots respectively.
Let $\mathfrak{g} = \mathfrak{n}^-\oplus \mathfrak{h}\oplus \mathfrak{n}$
be the triangular decomposition of $\mathfrak{g}$.
We fix a $\Z$-lattice $P \subseteq \mathfrak{h}^*$ such that
$\alpha_i \in P$ and $P(h_i) \subseteq \Z$ for all $i \in I$.
Let $i \in I$. We denote the corresponding simple generators by $e_i \in \mathfrak{n}$ and 
$f_i \in \mathfrak{n}^-$. We also define the
following Lie subalgebras of $\mathfrak{g}$. We 
set $\mathfrak{b}^{(-)}=\mathfrak{h}\oplus \mathfrak{n}^{(-)}$.
We set $\mathfrak{p}_i = \mathfrak{b}\oplus \C f_i$, 
the direct sum of the remaining root spaces is denoted by $\mathfrak{n}_i^-$ so that
we have $\mathfrak{g}=\mathfrak{p}_i \oplus \mathfrak{n}_i^-$. Similarly we define $\mathfrak{p}_i^-$ and $\mathfrak{n}_i$.
We also set $\mathfrak{g}_i = \C e_i \oplus \mathfrak{h} \oplus \C f_i$
and $\mathfrak{g}_i^{\prime} = \C e_i \oplus \C h_i \oplus \C f_i$. For $l \in \Z_{\geq 0}$ we denote by 
$\mathfrak{n}^-_l \subseteq \mathfrak{n}^-$ the Lie subalgebra consisting of root spaces associated to
the negative roots of $\mathfrak{g}$ of height $\leq -l$. 

\subsubsection{Weyl group} Let $W$ be the Weyl group of $\mathfrak{g}$ \cite{Kac90}.
For $i \in I$ we denote the reflection corresponding to $\alpha_i$ by $s_i \in W$. $W$ acts on $\mathfrak{h}^*$ and leaves $\Phi$ invariant. Let $\rho \in \mathfrak{h}^*$ be such that
$\rho(h_i)=1$ for all $i \in I$. We define the $\cdot$-action of $W$ on $\mathfrak{h}^*$
by $w\cdot \lambda = w(\lambda+\rho)-\rho$. This is independent of the choice of $\rho$. We denote by $<$ the Bruhat 
partial order on $W$ and by $\length: W \rightarrow \Z_{\geq 0}$ the length function.

\subsubsection{Pro-unipotent group schemes}
 For a subset $\Psi \subseteq \Phi^{>0}$ such that
$(\Psi+\Psi)\cap \Phi^{>0} \subseteq \Psi$ we define a pro-unipotent affine group scheme $U(\Psi) \subseteq U = U(\Phi^{>0})$
and similarly for $\Psi \subseteq \Phi^{<0}$ we define $U^-(\Psi) \subseteq U^-$ as in \cite{KT95}[Section 1]. 
If $l \in \Z_{\geq 0}$ and $\Phi^{<0}_l \subseteq \Phi^{<0}$ denotes the subset of roots of height $\leq -l$, then we
denote $U^-_l = U^-_l(\Phi^{<0}_l)$. 

\subsubsection{Group schemes $T$, $B^-$, $P_i^-$ and $G_i$}\label{ssec:groupschemesTBPiGi}
We define an algebraic torus $T$ by $T=\Spec \C[P]$, where $\C[P]$ is the group algebra of 
the abelian group $P$. For $i \in I$ there is an affine group scheme $P_i^{(-)}$ associated to $\mathfrak{p}_i^{(-)}$.
It contains the Borel group scheme $B^{(-)} = T \ltimes U^{(-)}$ associated to $\mathfrak{b}^{(-)}$ as a closed subgroup scheme. Furthermore, there is a reductive group $G_i$ such
that $P_i^- = G_i \ltimes U^-(\Phi^{<0}\setminus\{-\alpha_i\})$. The Lie algebra of $G_i$ is $\mathfrak{g}_i$.  

\subsection{Kashiwara flag scheme}\label{sec:Kashiwaraflagscheme}
Let $X$ denote the Kashiwara flag scheme of
\cite{Kas90} (indeed a scheme over $\C$)
associated to the data $\mathfrak{g}$, $\mathfrak{b}$ and $P$. It is constructed as $X=G/B$, the quotient of a scheme $G$ (a version
of the loop group) by the locally free right action of $B$. $P_i^-$ acts on $G$ and hence on $X$ for $i \in I$. 
The so-called Tits extension $\widetilde{W}$ of $W$ 
also acts on $G$ and hence on $X$, see also \cite{Eic16}[Chapter 3]. The image of a $T$-invariant subset $Y$ of $X$ under
the action of $\widetilde{w} \in \widetilde{W}$
only depends on the image of $\widetilde{w}$ in $W$.  Hence $w Y$ for $w \in W$ is defined independent 
of the choice of lift of $w$.
$X$ is covered by open subschemes $N(X_w)=wN(X_1) \cong \Aff^{\infty}$, $w \in W$. Usually $N(X_1)$ is referred to as the big cell of $X$. 
There are $T$-equivariant isomorphisms of schemes
\begin{align}\label{eq:NXw}
U^-(\Phi^{<0}\cap w\Phi^{<0})\times U(\Phi^{>0}\cap w\Phi^{<0}) \xrightarrow{\cong} N(X_w)\;, (u_1,u_2) \mapsto u_1u_2wB\;. 
\end{align}
The image of $1\times U(\Phi^{>0}\cap w\Phi^{<0})$ is the (finite dimensional) Schubert cell $X_w $ in $X$.
It is isomorphic to $\Aff^{\length(w)}$. 
We will abbreviate $U_w = U(\Phi^{>0}\cap w\Phi^{<0})$ and $U^-_w = U^-(\Phi^{<0}\cap w\Phi^{<0})$
in what follows. We also define $X^{\leq w} = \bigcup_{y \leq w} N(X_y)$. The $X^{\leq w}$, $w \in W$, form a
cover of $X$ by $B^-$-invariant quasi-compact open subschemes of $X$.
For each $\lambda \in P$ there is a natural line bundle $\sh{O}_X(\lambda)$ on $X$ that is $P_i^-$-equivariant for each $i$.  
By definition, the fiber of $\sh{O}_X(\lambda)$ at the distinguished point $1 \in X$ has a right
$B$-action given by $B \twoheadrightarrow T \xrightarrow{-\lambda} \Gm$.  

\subsection{$\sh{D}$-modules on the Kashiwara flag scheme}
We recall how \cite{KT95}[Section 2] give a meaning to twisted $\sh{D}$-modules on $X$ and how they define their
 cohomology groups; this is not
immediate as $X$ is not locally of finite type. 

\subsubsection{Twisted $\sh{D}$-modules}
When $Y$ is a smooth variety, $\sh{L}$ a line bundle on $Y$
and $Z$ a closed subset of $Y$ we denote by $\Hol\left(\sh{D}^{\sh{L}}_Y,Z\right)$  the 
category of holonomic right $\sh{D}^{\sh{L}}_Y$-modules with set-theoretic support in $Z$. Here $\sh{D}^{\sh{L}}_Y$
is the sheaf of $\sh{L}$-twisted differential operators on $Y$. This
category is abelian, each object has finite length and it possesses an auto-equivalence
called holonomic duality $\Dual$. The following proposition is well-known, see e.g. \cite{KT96}[Proposition 1.2.8]. 

\begin{Prop}  \label{Prop:equivalencebetweenholonomiccats}\noindent
\begin{enumerate}
\item Let $f: X_1 \rightarrow X_2$ be a morphism of smooth varieties. 
Let $Z_1$ be a closed subvariety of $X_1$ and $Z_2$ be a closed
subvariety of $X_2$ such that $f$ induces an isomorphism $Z_1 \xrightarrow{\cong} Z_2$. 
Then $f_*$ induces an equivalence $\Hol(\sh{D}_{X_1}^{f^*\sh{L}},Z_1) \simeq \Hol(\sh{D}_{X_2}^{\sh{L}},Z_2)$. 
\item Let $\iota_{V_1}: V_1 \hookrightarrow Z_1$ be a smooth locally closed subvariety. 
Then $\iota_{f(V_1)}: f(V_1) \hookrightarrow Z_2$ is also a smooth locally closed subvariety. 
$f_*$ sends $\iota_{V_1*}(\Omega_{V_1})\otimes f^*\sh{L} \in \Hol(\sh{D}_{X_1}^{f^*\sh{L}},Z_1)$ to $\iota_{f(V_1)*}(\Omega_{f(V_1)})\otimes \sh{L}
\in \Hol(\sh{D}_{X_2}^{\sh{L}},Z_2)$. 
\item $f_* \Dual = \Dual f_*$ as a functor $\Hol(\sh{D}_{X_1}^{f^*\sh{L}},Z_1)
\rightarrow \Hol(\sh{D}_{X_2}^{\sh{L}},Z_2)$
\end{enumerate}\qed
\end{Prop}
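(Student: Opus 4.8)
The plan is to deduce all three assertions from Kashiwara's equivalence theorem, using the standard compatibilities of the $\sh{D}$-module direct image with composition, with isomorphisms, with the projection formula, and with Verdier duality. For part (1) the organizing idea is to factor $f$ through its graph: write $f$ as $X_1 \xrightarrow{\Gamma_f} X_1\times X_2 \xrightarrow{\pr_2} X_2$, where $\Gamma_f$ is the graph immersion (a closed immersion since $X_2$ is separated) and $\pr_2$ is the second projection; note $\Gamma_f^*\pr_2^*\sh{L}=f^*\sh{L}$. Since $f|_{Z_1}$ is an isomorphism onto $Z_2$, the graph carries $Z_1$ isomorphically onto the closed subvariety $\Gamma_f(Z_1)\subseteq X_1\times X_2$, and $\pr_2$ carries $\Gamma_f(Z_1)$ isomorphically onto $Z_2$; hence it suffices to prove part (1) in the two special cases $f$ a closed immersion and $f$ a projection $Y'\times Y\to Y$. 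The closed-immersion case is exactly the twisted form of Kashiwara's theorem: $j_*$ is an exact equivalence of $\Hol(\sh{D}_{X_1}^{j^*\sh{L}})$ onto the full subcategory of $\Hol(\sh{D}_{X_2}^{\sh{L}})$ of objects supported in $j(X_1)$, and since $\supp(j_*-)=j(\supp(-))$ for a closed-immersion direct image this restricts to the asserted equivalence with the support conditions $Z_1$ and $j(Z_1)=Z_2$.

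For the projection $p\colon Y'\times Y\to Y$, with a closed $W\subseteq Y'\times Y$ mapping isomorphically onto $W_0:=p(W)$, I would argue locally on $Y$: over an affine open $U\subseteq Y$ one extends the section $(p|_W)^{-1}\colon W_0\cap U\to Y'$ to a neighborhood (or the formal neighborhood) of $W_0\cap U$ and translates along it, straightening $W$ to the constant section $(W_0\cap U)\times\{\mathrm{pt}\}$; once the support is a constant section, $p_*$ restricted to modules supported there is the inverse of the closed-immersion direct image $U\hookrightarrow U\times Y'$ at that point, hence an exact equivalence onto $\Hol(\sh{D}_U^{\sh{L}|_U},W_0\cap U)$, and these local equivalences glue over $Y$. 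I expect this local straightening for singular $W$ to be the main obstacle: one must verify that the constructions involved depend only on a neighborhood of the support and are independent of the ambient smooth variety, which is exactly the technical core of Kashiwara's equivalence with possibly singular support. Exactness of $f_*$ on the support-restricted categories holds in both special cases, so $f_*$ is an equivalence of abelian categories, not merely of derived ones.

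For part (2), $f|_{Z_1}$ being an isomorphism sends the smooth locally closed $V_1\subseteq Z_1$ to a smooth locally closed $f(V_1)\subseteq Z_2$; writing $\phi=f|_{Z_1}$, the composite $V_1\hookrightarrow Z_1\hookrightarrow X_1\xrightarrow{f}X_2$ factors as $V_1\xrightarrow{\phi|_{V_1}}f(V_1)\hookrightarrow X_2$, so compatibility of direct image with composition, the projection formula for the isomorphism $\phi|_{V_1}$, and the fact that the direct image of the dualizing module along an isomorphism is again the dualizing module give $f_*\big(\iota_{V_1*}\Omega_{V_1}\otimes f^*\sh{L}\big)=\iota_{f(V_1)*}\big((\phi|_{V_1})_*\Omega_{V_1}\otimes\iota_{f(V_1)}^*\sh{L}\big)=\iota_{f(V_1)*}\Omega_{f(V_1)}\otimes\sh{L}$. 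For part (3), I would invoke the general identity $\Dual\circ f_*=f_!\circ\Dual$ for $\sh{D}$-modules together with the fact that on objects supported in $Z_1$ the map $Z_1\hookrightarrow X_1\xrightarrow{f}X_2$ equals the closed immersion $Z_2\hookrightarrow X_2$, hence is proper, so $f_!=f_*$ on $\Hol(\sh{D}_{X_1}^{f^*\sh{L}},Z_1)$; therefore $\Dual f_*=f_!\Dual=f_*\Dual$ there, and by part (1) all terms are concentrated in degree $0$, giving the stated equality of functors. I expect parts (2) and (3) to be routine once part (1) is in hand.
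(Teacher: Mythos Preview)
The paper does not prove this proposition: it states it as well-known, cites \cite{KT96}[Proposition 1.2.8], and closes with \qed. So there is no proof in the paper to compare your proposal against; you are supplying an argument where the paper simply invokes a reference.

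Your approach---factoring $f$ through its graph, handling the closed-immersion factor by Kashiwara's theorem, and handling the projection factor by locally straightening the section---is the standard route to results of this type and is correct in outline. Parts (2) and (3) are indeed routine consequences once (1) is in place, and your arguments for them (composition of direct images plus the projection formula for the twist; Verdier duality $\Dual f_* \cong f_!\Dual$ together with $f_!=f_*$ on objects whose support maps properly) are sound.

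The one point that deserves care, which you already flag, is the projection step when the support $W_0$ is singular: extending the inverse section $(p|_W)^{-1}\colon W_0\cap U\to Y'$ to a genuine morphism on an open neighborhood of $W_0\cap U$ requires the target to be affine, so one should first refine the cover so that $(p|_W)^{-1}(W_0\cap U)$ lands in an affine open $V\subseteq Y'$; this is always possible after shrinking $U$, since $p|_W$ is a homeomorphism. With that refinement the straightening is elementary, and no passage to formal neighborhoods is needed. Alternatively, one can avoid straightening altogether by noting that on the support-restricted categories $p^![-\dim Y']$ furnishes an explicit quasi-inverse to $p_*$.
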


Coming back to the Kashiwara flag scheme $X$, the group scheme $U^-_l$ acts freely on $X^{\leq w}$ and $X^{\leq w}_l = U^-_l \backslash X^{\leq w}$ is a smooth quasi-projective variety for all $l \in \Z_{>0}$ large enough. 
This is a result of \cite{SVV14}[Lemma A.2]. 
\emph{The following convention is important: whenever considering $X^{\leq w}_l$ in the sequel it will be understood that $l$ is large enough.} 
The closed immersion $\overline{X_w} \hookrightarrow X^{\leq w}$ 
of the Schubert variety then induces
a closed immersion $\overline{X_w} \hookrightarrow X^{\leq w}_l$. We will denote the projections
by $p_l: X^{\leq w} \rightarrow X^{\leq w}_l$ and $p^k_l: X^{\leq w}_k \rightarrow X^{\leq w}_l$
for $k \geq l$. 
As an instance of the above category we consider
$\Hol\left(\sh{D}_{X^{\leq w}_l}(\lambda),\overline{X_w}\right)$. Here $\sh{D}_{X^{\leq w}_l}(\lambda)$ is the sheaf
of differential operators on $X^{\leq w}_l$ twisted by $\sh{O}_{X^{\leq w}_l}(\lambda)$, 
a line bundle on $X^{\leq w}_l$ satisfying 
\begin{align}\label{eq:pullback}
p_l^* \sh{O}_{X^{\leq w}_l}(\lambda) \xrightarrow{\cong}
\sh{O}_X(\lambda)\vert X^{\leq w}\quad p^{k*}_l\sh{O}_{X^{\leq w}_l}(\lambda) \xrightarrow{\cong} \sh{O}_{X^{\leq w}_k}(\lambda)\;. 
\end{align}
It will be convenient to work with right $\sh{D}$-modules in this article because forming direct images
is easier than for left $\sh{D}$-modules.
Furthermore one considers a 
category $\Hol(\lambda, \overline{X_w}, X^{\leq w},l_0)$ whose objects are systems
$(\sh{M}_l)_{l \geq l_0}$, where $\sh{M}_l$ is an object of $\Hol\left(\sh{D}_{X^{\leq w}_l}(\lambda),\overline{X_w}\right)$ and for different $l$ the $\sh{M}_l$ are related by isomorphisms
\begin{align}\label{eq:gammakl}
\gamma^k_l: p^k_{l*}\sh{M}_k \xrightarrow{\cong} \sh{M}_l
\end{align}
satisfying the coherence condition
\begin{align}\label{eq:gammaformtransitivesystem}
\gamma^{l_1}_{l_3} = \gamma^{l_2}_{l_3} p^{l_2}_{l_3*}\gamma^{l_1}_{l_2}\;,\; l_1 \geq l_2 \geq l_3\;.
\end{align}
Even though the $\gamma^k_l$ are part of the data, we will often omit them for brevity. 
We have for each $l$ an equivalence of categories
\begin{align}\label{eq:equivalenceHol}
\Hol(\lambda, \overline{X_w}, X^{\leq w},l_0) \rightarrow \Hol\left(\sh{D}_{X^{\leq w}_l}(\lambda),\overline{X_w}\right)\;, (\sh{M}_l)_{l \geq l_0}
\mapsto \sh{M}_l\;.
\end{align} 
The category $\Hol(\lambda)$ is then defined by taking limits, see \cite{KT95},
where it is denoted by $\mathbb{H}(\lambda)$.
$\Dual$ induces an auto-equivalence of $\Hol(\lambda)$ that will be 
denoted by the same symbol.\par

\subsubsection{Cohomology groups $\Ho^j(X,\cdot)$
and $\overline{\Ho^j}(X,\cdot)$}\label{ssec:cohomologygroups}
 Let $\sh{M}=(\sh{M}_l)_{l \geq l_0}$ be an object
of $\Hol(\lambda, \overline{X_w}, X^{\leq w},l_0)$. For each $l$ we have
the usual sheaf cohomology groups $\Ho^j(X^{\leq w}_l, \sh{M}_l)$, $j \geq 0$, carrying
a left action of $\mathfrak{b}^-$ and these form an inverse system when $l$ varies. 
Here the left $\mathfrak{b}^-$-action is defined so that it differs by a sign from the natural right $\mathfrak{b}^-$-action on
 $\Ho^j(X^{\leq w}_l, \sh{M}_l)$, see \cite{KT95}[p. 39].
One defines $\Ho^j(X,\sh{M})=\varprojlim_{l \geq l_0} \Ho^j(X^{\leq w}_l, \sh{M}_l)$. 
One furthermore constructs a left $\mathfrak{g}$-action on $\Ho^j(X,\sh{M})$ from the right $\sh{D}$-module structure on $\sh{M}_l$ 
as follows. Let $v \in \mathfrak{g}$.
Then there is an $m \geq 0$ such that $[v,\mathfrak{n}^-_{l+m}] \subseteq \mathfrak{n}^-_l$
for all $l \geq 0$ and $v$ defines a section
\begin{align}\label{eq:partiallplusml}
\partial^{l+m}_l(v) \in \Ho^0(X^{\leq w}_{l+m},F_1\sh{D}_{l+m
\rightarrow l}(\lambda))\;,
\end{align}
see \cite{KT95}[Lemma 2.3.1]. Here $F_1 \sh{D}_{l+m \to l}(\lambda)$ denotes
the first term of the order filtration of the transfer bimodule associated to 
$p^{l+m}_l$. In this way $v$ defines
a $\C$-linear map $\Ho^j(X^{\leq w}_{l+m},\sh{M}_{l+m}) \rightarrow \Ho^j(X^{\leq w}_l,\sh{M}_l)$ and hence
an element of $\End_{\C}\Ho^j(X,\sh{M})$. 
For a $\mathfrak{h}$-module $M$ let
\begin{align*}
M_{\mu} = \{m \in M\; \vert\; \exists N \in \Z_{\geq 0}\; (h-\mu(h))^N m = 0\; \forall h \in \mathfrak{h}\}
\end{align*}
be the generalized weight space associated to $\mu \in \mathfrak{h}^*$. 
We denote $\overline{\Ho^j}(X,\sh{M}) = \bigoplus_{\mu \in \mathfrak{h}^*}\Ho^j(X,\sh{M})_{\mu}$. 
This is a $\mathfrak{g}$-submodule of $\Ho^j(X,\sh{M})$. 

\section{Intersections of Schubert cells $X_w \cap s_i X_w$}\label{ssec:intersection}
In this section we assume $s_iw < w$. 
\eqref{eq:NXw} gives $T$-equivariant isomorphisms
\begin{align}\label{eq:parametrizationofNXw}
U^-_w\times U(\alpha_i)\times ^{s_i}U_{s_iw}
\xrightarrow{\cong} N(X_w)
\end{align}
and
\begin{align}\label{eq:parametrizationofNXsw}
\begin{split}
^{s_i}U^-_w \times U^-(-\alpha_i)\times U_{s_i w} \xrightarrow{\cong} N(X_{s_iw})\;.
\end{split}
\end{align}
Here $^{s_i}(\cdot) = s_i (\cdot) s_i^{-1}$ denotes the conjugate subgroup. 
The following lemma describes the intersection of the Schubert cell $X_w$ with
its translate $s_i X_w$. 

\begin{Lemma} \label{Lemma:intersectionofXwandXsiW}
We have $X_w \cap s_i X_w  = s_i X_w \setminus X_{s_i w}= X_w \setminus s_i X_{s_i w}$. (Here $\setminus$
denotes the difference of sets.)
Under the isomorphisms \eqref{eq:parametrizationofNXw} and \eqref{eq:parametrizationofNXsw}
the identity map $s_i X_w \setminus X_{s_i w} \rightarrow X_w \setminus s_i X_{s_i w}$ is the isomorphism
\begin{align}\label{eq:transfunction}
\begin{split}
& (U^-(-\alpha_i)\setminus 1)\times U_{s_i w} \rightarrow (U(\alpha_i)\setminus 1)\times ^{s_i}U_{s_i w}\\
& (e^{zf_i}, h_i(z)^{-1}u h_i(z)) \mapsto (e^{z^{-1}e_i}, \dot{s_i}^{-1} \widetilde{u}(z) \dot{s_i})\;.
\end{split}
\end{align}
Here $z \in \Gm$ and $\dot{s_i}=e^{e_i}e^{-f_i}e^{e_i}$. $h_i$
is considered as a group homomorphism $\Gm \rightarrow T$. Given $u$ and $z$, $\widetilde{u}(z) \in U_{s_i w}$ is uniquely determined by the condition
$e^{z e_i} u \in \widetilde{u}(z) U(\Phi^{>0}\cap s_i w \Phi^{>0})$. 
\end{Lemma}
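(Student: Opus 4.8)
The plan is to reduce the whole statement to the single inclusion $s_iX_w\setminus X_{s_iw}\subseteq X_w$. Granting this, the three-fold equality follows formally from the disjointness of distinct Schubert cells and from applying the automorphism $s_i$ of $X$, and the coordinate formula \eqref{eq:transfunction} will come out of the very computation that establishes this inclusion.

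First I would record the combinatorics forced by $s_iw<w$: one has $\alpha_i\in\Phi^{>0}\cap w\Phi^{<0}$, and $\beta\mapsto s_i\beta$ carries $(\Phi^{>0}\cap w\Phi^{<0})\setminus\{\alpha_i\}$ bijectively onto $\Psi:=\Phi^{>0}\cap s_iw\Phi^{<0}$ (using that $s_i$ permutes $\Phi^{>0}\setminus\{\alpha_i\}$), so that $\Phi^{>0}\cap w\Phi^{<0}=\{\alpha_i\}\sqcup s_i\Psi$. Conjugating by $s_i$ this reads ${}^{s_i}U_w=U^-(-\alpha_i)\cdot U_{s_iw}$ and $U_w=U(\alpha_i)\cdot {}^{s_i}U_{s_iw}$, with the orderings built into \eqref{eq:parametrizationofNXw} and \eqref{eq:parametrizationofNXsw}. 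Hence under \eqref{eq:parametrizationofNXsw} the translate $s_iX_w={}^{s_i}U_w\cdot s_iwB$ is the image of $\{1\}\times U^-(-\alpha_i)\times U_{s_iw}$, with the subcell $X_{s_iw}$ being $\{1\}\times\{1\}\times U_{s_iw}$; since $h_i(z)\in T$ normalizes $U_{s_iw}$, a general point of $s_iX_w\setminus X_{s_iw}$ is thus $p=e^{zf_i}\,h_i(z)^{-1}uh_i(z)\,s_iwB$ with $z\in\Gm$, $u\in U_{s_iw}$. Symmetrically, under \eqref{eq:parametrizationofNXw} one has $X_w=$ image of $\{1\}\times U(\alpha_i)\times {}^{s_i}U_{s_iw}$ and $s_iX_{s_iw}={}^{s_i}U_{s_iw}\cdot wB=$ image of $\{1\}\times\{1\}\times {}^{s_i}U_{s_iw}$, so $X_w\setminus s_iX_{s_iw}$ is the image of $\{1\}\times(U(\alpha_i)\setminus 1)\times {}^{s_i}U_{s_iw}$.

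Next I would compute $p$ using the rank-one identity $e^{zf_i}=e^{z^{-1}e_i}\,\dot{s_i}^{-1}\,h_i(z)\,e^{z^{-1}e_i}$ (immediate from the $\SL_2$ relations in $G_i$ with $\dot{s_i}=e^{e_i}e^{-f_i}e^{e_i}$), together with $h_i(z)e^{z^{-1}e_i}h_i(z)^{-1}=e^{ze_i}$ and the fact that $h_i(z)$ fixes the point $s_iwB$; this brings $p$ to $e^{z^{-1}e_i}\,\dot{s_i}^{-1}(e^{ze_i}u)\,s_iwB$. Writing $e^{ze_i}u=\widetilde{u}(z)\,u''(z)$ with $\widetilde{u}(z)\in U_{s_iw}$ and $u''(z)\in U(\Phi^{>0}\cap s_iw\Phi^{>0})$ — which is exactly the $\widetilde{u}(z)$ of the statement, since $\Phi^{>0}=\Psi\sqcup(\Phi^{>0}\cap s_iw\Phi^{>0})$ — then using $u''(z)\cdot s_iwB=s_iwB$ (as $(s_iw)^{-1}U(\Phi^{>0}\cap s_iw\Phi^{>0})(s_iw)\subseteq U\subseteq B$) and $\dot{s_i}^{-1}s_iw\,B=wB$ by the lift-independence of $wB$, one obtains
\[
p=e^{z^{-1}e_i}\cdot(\dot{s_i}^{-1}\widetilde{u}(z)\dot{s_i})\cdot wB .
\]
Since $e^{z^{-1}e_i}\in U(\alpha_i)\setminus 1$ (as $z\neq 0$) and $\dot{s_i}^{-1}\widetilde{u}(z)\dot{s_i}\in U(s_i\Psi)={}^{s_i}U_{s_iw}$, and $U_w=U(\alpha_i)\cdot {}^{s_i}U_{s_iw}$ by the previous step, this shows $p\in U_w\cdot wB=X_w$; moreover, reading off the coordinates of $p$ via \eqref{eq:parametrizationofNXw} gives $(1,\,e^{z^{-1}e_i},\,\dot{s_i}^{-1}\widetilde{u}(z)\dot{s_i})$, i.e. $p\in X_w\setminus s_iX_{s_iw}$ with precisely the coordinates on the right-hand side of \eqref{eq:transfunction}.

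Finally I would assemble the statement. From $s_iX_w\setminus X_{s_iw}\subseteq X_w$ together with $X_w\cap X_{s_iw}=\emptyset$ one gets $X_w\cap s_iX_w=s_iX_w\setminus X_{s_iw}$; applying $s_i$ then yields $X_w\cap s_iX_w=X_w\setminus s_iX_{s_iw}$. In particular the two parametrizations above describe the same subset of $X$, the identity map between them is a bijection, and the computation identifies this map with \eqref{eq:transfunction}. The main obstacle is purely one of bookkeeping: making the rank-one identity, the identifications of the root subgroups ${}^{s_i}U_{s_iw}$, $U(\alpha_i)$, $U^-(-\alpha_i)$, the ordering conventions inside \eqref{eq:parametrizationofNXw}--\eqref{eq:parametrizationofNXsw}, and the lift-independence of $wB$ all fit together consistently; there is no conceptual difficulty once these are pinned down.
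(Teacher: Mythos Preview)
Your proof is correct and follows essentially the same approach as the paper: both reduce to the rank-one $\SL_2$ identity (the paper writes it as $e^{z^{-1}e_i}\dot{s_i}^{-1}e^{ze_i}h_i(z)=e^{zf_i}$, which is your identity after commuting $h_i(z)$ past $e^{z^{-1}e_i}$) and then absorb the remaining factor into $B$ via the decomposition $e^{ze_i}u=\widetilde{u}(z)\cdot U(\Phi^{>0}\cap s_iw\Phi^{>0})$. The only organizational difference is that the paper first argues the two easy inclusions $X_w\cap s_iX_w\subseteq s_iX_w\setminus X_{s_iw}$ and $\subseteq X_w\setminus s_iX_{s_iw}$ separately, whereas you obtain the second equality from the first by applying the automorphism $s_i$; this is a harmless streamlining.
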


\begin{proof}
\emph{1. $X_w \cap s_i X_w \subseteq X_w \setminus s_i X_{s_i w}$
and $X_w \cap s_i X_w \subseteq  s_i X_w \setminus X_{s_i w}$.}
Let $e^{x e_i} v_2 w B = e^{z f_i} u_2 s_i w B \in X_w \cap s_i X_w$,
where $x, z \in \C$, $u_2 \in U_{s_i w}$,
$v_2 \in ^{s_i}U_{s_i w}$. If $z = 0$ the rhs is in $X_{s_i w}$
and because of $X_{s_i w} \cap X_w = \varnothing$ this is impossible. 
If $x=0$ the lhs is in $s_i X_{s_i w}$ and again this is impossible.\newline
\emph{2. $X_w \cap s_i X_w = s_i X_w \setminus X_{s_i w}= X_w \setminus s_i X_{s_i w}
$ and the given map corresponds to the identity of $X_w \cap s_i X_w$.}\newline
We check $e^{z f_i}h_i(z)^{-1}uh_i(z) s_i wB = e^{z^{-1}e_i}\dot{s_i}^{-1}\widetilde{u}(z)s_iwB$. This shows that the given map corresponds to the identity map of $X_w \cap s_i X_w$ and in particular it is an isomorphism. Indeed, we have
$e^{z^{-1}e_i}\dot{s_i}^{-1}e^{ze_i} h_i(z) = e^{z f_i}$.  
It follows
\begin{align*}
e^{z f_i} h_i(z)^{-1} u h_i(z) s_i w B = e^{z^{-1}e_i}\dot{s_i}^{-1} e^{ze_i} u s_i w B
\in e^{z^{-1}e_i}\dot{s_i}^{-1}\widetilde{u}(z)U(\Phi^{>0}\cap s_iw \Phi^{>0})s_i wB\\
= e^{z^{-1}e_i}\dot{s_i}^{-1}\widetilde{u}(z)s_iw (s_iw)^{-1}U(\Phi^{>0}\cap s_iw \Phi^{>0})s_i wB
= e^{z^{-1}e_i}\dot{s_i}^{-1}\widetilde{u}(z)s_iwB
\end{align*}
as $(s_i w)^{-1}U(\Phi^{>0}\cap s_iw\Phi^{>0})s_i w= U((s_i w)^{-1}\Phi^{>0} \cap \Phi^{>0})
\subseteq B$.
\end{proof}

\section{$\sh{D}$-module direct images from $X_w \cap s_i X_w$}\label{sec:constructionsforXwandsXw}
In this section we assume $s_i w < w$. We consider the embedding 
\begin{align}\label{eq:iwl}
i_{w,l}: X_w \cap s_i X_w \hookrightarrow X^{\leq w}_l
\end{align} 
(for $l$ large enough). We note that $i_{w,l}$ is locally closed and affine and identify its domain of definition with $\Gm \times ^{s_i}U_{s_i w}$ in view of Lemma \autoref{Lemma:intersectionofXwandXsiW}. We define analogues of
$\sh{B}_w(\lambda)_l$ and $\sh{M}_w(\lambda)_l$ of \cite{KT95}[(3.1.1) and (3.1.3)]. 

\begin{Def}\label{Def:DmodR*!wlambdaalphal}
For $\lambda \in P$, $\mu \in \C$, and $? \in \{*,!\}$ we define the right $\sh{D}_{X^{\leq w}_l}(\lambda)$-module
as
\begin{align*}
\sh{R}_{?w}(\lambda,\mu)_l = i_{w,l?}\left(\left(\Omega^{(\mu)}_{\Gm} \boxtimes \Omega_{^{s_i}U_{s_i w}}\right) \otimes i_{w,l}^*\sh{O}_{X^{\leq w}_l}(\lambda) \right)\;. 
\end{align*}
\end{Def}
Here we introduced the right $\sh{D}_{\Gm}$-module $\Omega^{(\mu)}_{\Gm}
=\sh{D}_{\Gm}/(x\partial_x-\mu)\sh{D}_{\Gm}$ as in \cite{Eic15}.
The coordinate $x$ on $\Gm$ is the one of $U(\alpha_i)$. Thus $x=\infty$ corresponds to $X_{s_i w}$. 
By definition $\sh{R}_{?w}(\lambda,\mu)_l$ is the direct image of a line bundle
with flat connection on $X_w \cap s_i X_w$ depending on $\mu$ (isomorphic to $\Omega_{X_w \cap s_i X_w}$ if and only if $\mu \in \Z$). 
\begin{Rem}
As a direct consequence of \cite{Eic15}[Remark 2.3]  we have 
\begin{align*}
\sh{R}_{?w}(\lambda,\mu+1)_l \cong \sh{R}_{?w}(\lambda,\mu)_l\qquad 
\Dual \sh{R}_{*w}(\lambda,\mu)_l \cong \sh{R}_{!w}(\lambda,-\mu)_l\;. 
\end{align*}
\end{Rem}
By Proposition \autoref{Prop:equivalencebetweenholonomiccats}(2) we see that $\sh{R}_{?w}(\lambda,\mu)=(\sh{R}_{?w}(\lambda,\mu)_l)_{l \geq l_0}$
is an object of $\Hol(\lambda,\overline{X_w},X^{\leq w},l_0)$. 

\begin{Not}
In case $\mu \in \Z$ we abbreviate 
$\sh{R}_{?w}(\lambda)_l = \sh{R}_{?w}(\lambda,\mu)_l$
and $\sh{R}_{?w}(\lambda)=\sh{R}_{?w}(\lambda,\mu)$. 
Thus, these are isomorphism classes rather than objects. 
\end{Not}

\section{$\overline{\Ho^0}(X, \sh{R}_{*w}(\lambda,\mu))$ as $\mathfrak{h}$-module}
\label{sec:H0ofR*wlambdaalphaashmodule}
The following theorem is similar to \cite{KT95}[Lemma 3.2.1]. 

\begin{Thm}\label{Thm:H0hmod} We have isomorphisms of $\mathfrak{h}$-modules
\begin{enumerate}
\item $\Ho^0(X^{\leq w}_l,\sh{R}_{*w}(\lambda,\mu)_l) 
\cong \C[z,z^{-1}]\otimes_{\C} \Sym (\mathfrak{n}^-_i /\mathfrak{n}^-_l)\otimes_{\C} \C_{s_i w\cdot \lambda + \mu\alpha_i}$
\item $\overline{\Ho^0}(X,\sh{R}_{*w}(\lambda,\mu)) \cong 
\C[z,z^{-1}]\otimes_{\C} \Sym \mathfrak{n}^-_i \otimes_{\C} \C_{s_i w\cdot \lambda + \mu\alpha_i}$
\end{enumerate}
Here $z$ has weight $\alpha_i$. 
\end{Thm}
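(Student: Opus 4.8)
The plan is to reduce the statement to a local computation on the affinely embedded locally closed subscheme $X_w \cap s_i X_w \cong \C^{\times} \times {}^{s_i}U_{s_i w}$ and then globalize. First I would recall that $i_{w,l}$ is an affine embedding, so the $*$-direct image $\sh{R}_{*w}(\lambda,\alpha)_l = i_{w,l*}\big(\mathcal{E} \otimes i_{w,l}^*\sh{O}_{X^{\leq w}_l}(\lambda)\big)$ (with $\mathcal{E} = \Omega^{(\alpha)}_{\C^{\times}} \boxtimes \Omega_{{}^{s_i}U_{s_i w}}$) has sheaf cohomology that can be computed either via the affine morphism $i_{w,l}$ applied to cohomology on the source, or via a Čech computation on the big cell $N(X_w)$ meeting $\overline{X_w}$. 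The key structural input is an algebraic analogue of the tubular neighborhood: the normal directions to $X_w \cap s_i X_w$ inside $X^{\leq w}_l$ are, after passing to the $U^-_l$-quotient, parametrized by (a finite-dimensional truncation of) $\mathfrak{n}^-_i$ — precisely the negative root directions that are ``transverse'' to the Schubert cell and not killed by the $U^-_l$-quotient. This is the same mechanism as in \cite{KT95}[Lemma 3.2.1], where one gets $\Sym(\mathfrak{n}^-/\mathfrak{n}^-_l)$; here the intersection $X_w \cap s_i X_w$ has one fewer dimension's worth of ``tangent'' directions (the $\C^{\times}$ replacing an $\Aff^1$) so the transverse directions acquire exactly one extra summand, giving $\mathfrak{n}^-_i$ rather than $\mathfrak{n}^-$ after the second step, and accounting for the $\C[z,z^{-1}]$ factor in place of $\C[z]$.

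For part (1), I would argue as follows. Since $i_{w,l}$ is affine and $\mathcal{E}$ has no higher cohomology issues (it is $\mathcal{D}$-module pushforward of a line bundle with connection along an affine morphism), $\Ho^0(X^{\leq w}_l, \sh{R}_{*w}(\lambda,\alpha)_l)$ is computed as the global sections of the direct image, which by the projection/base-change formula for affinely embedded subvarieties equals the $\mathcal{D}$-module direct image's $\Ho^0$, and this in turn is the space of distributional sections supported along the closure with the prescribed transverse behavior. Concretely, choosing the coordinate $x$ on the $U(\alpha_i)$-factor (so $\C^{\times} = \{x \neq 0, x \neq \infty\}$, with $x = \infty$ the locus $X_{s_i w}$), writing $z$ for the coordinate with $T$-weight $\alpha_i$, and using the explicit coordinates from \eqref{eq:parametrizationofNXw} on $N(X_w)$, the sections are spanned by $z^n \otimes (\text{monomial in } \mathfrak{n}^-_i/\mathfrak{n}^-_l\text{-directions}) \otimes (\text{the canonical generator})$ where $n$ ranges over $\Z$ (since we push forward from $\C^{\times}$, not $\Aff^1$, so negative powers of $z$ appear) and the generator has $\mathfrak{h}$-weight $s_i w \cdot \lambda + \alpha \alpha_i$. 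The weight of the generator is pinned down by: the twist $\lambda$ contributes via $\sh{O}_X(\lambda)$ and the $\dotW$-shift together with the $s_i w$ appearing in the parametrization contribute $s_i w \cdot \lambda$ (exactly as the $v$-shift in \cite{KT95}[Lemma 3.2.1] is $w \cdot \lambda$), while the local system parameter $\alpha$ on $\C^{\times}$ shifts the weight by $\alpha \alpha_i$ because $z$ has weight $\alpha_i$ and the connection is $x\partial_x - \alpha$. I would verify this weight bookkeeping carefully against the fiber convention for $\sh{O}_X(\lambda)$ stated in \S\ref{sec:Kashiwaraflagscheme}, i.e. the right $B$-action through $-\lambda$.

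For part (2), I would pass to the limit over $l$: by construction $\overline{\Ho^0}(X, \sh{R}_{*w}(\lambda,\alpha)) = \bigoplus_{\mu} \big(\varprojlim_l \Ho^0(X^{\leq w}_l, \sh{R}_{*w}(\lambda,\alpha)_l)\big)_\mu$, and the transition maps $\gamma^k_l$ together with the maps induced by $p^k_l$ are, on the explicit bases from part (1), just the natural inclusions $\Sym(\mathfrak{n}^-_i/\mathfrak{n}^-_k) \hookrightarrow \Sym(\mathfrak{n}^-_i/\mathfrak{n}^-_l)$ for $k \geq l$ (dually: the truncations are surjective, so the inverse limit of the full objects combined with taking generalized weight spaces — each of which is finite-dimensional and stabilizes for $l$ large — recovers $\Sym \mathfrak{n}^-_i$). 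Here one uses that each weight space of $\C[z,z^{-1}] \otimes \Sym \mathfrak{n}^-_i \otimes \C_{s_i w \cdot \lambda + \alpha \alpha_i}$ is finite-dimensional (the grading by $\mathfrak{h}^*$ is by a finitely generated monoid shifted by a fixed weight, with finite graded pieces since $\mathfrak{n}^-_i$ has finite-dimensional root spaces) so the $\varprojlim$ and $\bigoplus_\mu$ are compatible and no completion issue survives. The $\mathfrak{h}$-equivariance of all the maps is immediate since everything in sight is $T$-equivariant.

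\textbf{Main obstacle.} The step I expect to require the most care is the identification in part (1) of the transverse part as $\Sym(\mathfrak{n}^-_i/\mathfrak{n}^-_l)$ with the correct $\mathfrak{h}$-weight of the cyclic generator — that is, disentangling the three sources of weight shift ($\lambda$ and the $\dotW$-shift, the $s_i w$ from the cell parametrization, and the monodromy parameter $\alpha$ along $\C^{\times}$) and checking that the $U^-_l$-quotient removes exactly the $\mathfrak{n}^-_l$-directions while leaving the $\alpha_i$-direction intact (so that one indeed gets $\mathfrak{n}^-_i/\mathfrak{n}^-_l$, not $\mathfrak{n}^-/\mathfrak{n}^-_l$, and the $\C^{\times}$-factor survives as $\C[z,z^{-1}]$ rather than being absorbed). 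The rest — affineness of $i_{w,l}$, vanishing/exactness of the pushforward, and the inverse limit — is routine given the cited results of \cite{KT95} and \cite{Eic15}.
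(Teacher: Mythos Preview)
Your proposal is essentially correct and follows the same approach as the paper: reduce to an explicit computation on an affine chart containing $X_w \cap s_iX_w$, identify the $*$-pushforward's global sections as a tensor product, and read off the $\mathfrak{h}$-weights. Two small points are worth flagging.

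First, the paper carries out the computation in the chart $N(X_{s_iw})_l$ rather than $N(X_w)_l$: it factors $i_{w,l}$ as $\kappa_{w,l}: X_w \cap s_iX_w \hookrightarrow N(X_{s_iw})_l$ followed by the open inclusion into $X^{\leq w}_l$, so that $\Ho^0$ on $X^{\leq w}_l$ equals $\Ho^0$ on $N(X_{s_iw})_l$ for free (since $j_*$ along an open immersion preserves $\Ho^0$). It then invokes \cite{KT95}[(3.6.4)] verbatim, only replacing $\Omega_{\C^\times}$ by $\Omega^{(\alpha)}_{\C^\times}$, and records that restriction to $N(X_w)_l$ is also an isomorphism. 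Your choice of $N(X_w)$ works equally well, but you should state explicitly why the restriction from $X^{\leq w}_l$ to your chart is an isomorphism on $\Ho^0$; the cleanest reason is precisely this factoring through an open containing the image of $i_{w,l}$.

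Second, your heuristic about $\mathfrak{n}^-_i$ is miswritten: $\mathfrak{n}^-_i = \mathfrak{n}^- \cap s_i\mathfrak{n}^-$ is the subalgebra of $\mathfrak{n}^-$ \emph{missing} the $-\alpha_i$ root space, hence strictly smaller than $\mathfrak{n}^-$, not larger. The correct picture (comparing with \cite{KT95}[Lemma 3.2.1] for $\sh{B}_w(\lambda)$, which yields $\Sym(\mathfrak{n}^-/\mathfrak{n}^-_l)\otimes\C_{w\cdot\lambda}$) is that one transverse direction is \emph{traded} for the $\C^{\times}$-factor, which contributes $\C[z,z^{-1}]$, and the base weight shifts from $w\cdot\lambda$ to $s_iw\cdot\lambda + \alpha\alpha_i$. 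This is only a slip in exposition; the formula you aim for is correct, and the weight bookkeeping you flag as the main obstacle is exactly the ``simplification'' step the paper performs after displaying the raw tensor product $\Uea\Lie({}^{s_i}U^-_{w,l})\otimes\Ho^0(\C^\times,\Omega^{(\alpha)}_{\C^\times})\otimes\Ho^0(U_{s_iw},\sh{O}_{U_{s_iw}})\otimes\C\omega\otimes\C t^{s_iw,\lambda}_l$. Your treatment of part~(2) via the inverse limit is more detailed than the paper's, which simply says it follows directly from~(1).
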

\begin{proof}
(1) The $\mathfrak{h}$-module $\Ho^0(X^{\leq w}_l,\sh{R}_{*w}(\lambda,\mu)_l)$ can be
computed in the same way as the 
$\mathfrak{h}$-module $\overline{\Ho^0}(N(X_{s_iw})_l,\sh{M}_w(\lambda)_l)$ is computed
in \cite{KT95}[(3.6.4)], as we will explain. Here $N(X_{s_iw})_l = U^-_l \backslash N(X_{s_iw})$
is open in $X^{\leq w}_l$. Since $i_{w,l}$ is the composition of
\begin{align*}
\kappa_{w,l}: X_w \cap N(X_{s_iw})_l = X_w \cap s_iX_w \hookrightarrow N(X_{s_iw})_l
\end{align*} 
with the open embedding $N(X_{s_iw})_l \hookrightarrow X^{\leq w}_l$, we have 
\begin{align}\label{eq:R*wrestrtoNXswisBw}
\sh{R}_{*w}(\lambda)_l \vert N(X_{s_iw})_l 
\cong \kappa_{w,l*}(\Omega_{X_w \cap s_iX_w}\otimes i_{w,l}^*\sh{O}_{X^{\leq w}_l}(\lambda))
\cong \sh{B}_w(\lambda)_l\vert N(X_{s_iw})_l\;.
\end{align}
Replacing $\Omega_{\Gm}$ by
$\Omega^{(\mu)}_{\Gm}$ in the computation leading to the expression \cite{KT95}[(3.6.4)] gives
\begin{align*}
& \Ho^0(N(X_{s_iw})_l,\sh{R}_{*w}(\lambda,\mu)_l) \\ 
& \cong \Uea \Lie(^{s_i}U^-_{w,l})\otimes_{\C} \Ho^0\left(\Gm,\Omega^{(\mu)}_{\Gm}\right)
\otimes_{\C}\Ho^0(U_{s_iw},\sh{O}_{U_{s_iw}})\otimes_{\C}\C\omega\otimes_{\C}\C t^{s_iw,\lambda}_l
\end{align*}
as $\mathfrak{h}$-module. We have denoted $^{s_i} U^-_{w,l} = U_l^- \backslash ^{s_i} U_w^-$. 
Here $\omega$ is 
a nowhere vanishing section of $\Ho^0(U_{s_iw},\Omega_{U_{s_iw}})$ and $t^{s_iw,\lambda}_l
\in \Ho^0(N(X_{s_iw})_l,\sh{O}_{X^{\leq w}_l}(\lambda))$ is the nowhere vanishing
section defined in \cite{KT95}[(3.2.4)]. 
Then $1 \otimes \overline{1}z^n \otimes 1 \otimes \omega \otimes t^{s_iw,\lambda}_l$
has weight $s_i w \cdot \lambda + (\mu+n)\alpha_i$. 
We simplify this to
\begin{align*}
\Ho^0(N(X_{s_iw})_l,\sh{R}_{*w}(\lambda,\mu)_l) 
\cong \C[z,z^{-1}]\otimes_{\C} \Sym (\mathfrak{n}^-_i/\mathfrak{n}^-_l)\otimes_{\C} \C_{s_i w\cdot \lambda + \mu \alpha_i}\;, 
\end{align*}
where $z$ is given the weight $\alpha_i$. 
The restriction maps give isomorphisms
\begin{align*}
 \Ho^0(N(X_w)_l,\sh{R}_{*w}(\lambda,\mu)_l) \xleftarrow{\cong} \Ho^0(X^{\leq w}_l, \sh{R}_{*w}(\lambda,\mu)_l)\xrightarrow{\cong} \Ho^0(N(X_{s_iw})_l,\sh{R}_{*w}(\lambda,\mu)_l)\;. 
\end{align*}
We remark that the corresponding statement for $\sh{R}_{!w}(\lambda,\mu)_l$ is not a priori true. \\
(2) directly follows from (1). 
\end{proof}

\section{Cohomology vanishing}\label{sec:cohomologyvanishing}
The following lemma and its proof is similar to \cite{KT95}[Lemma 3.2.1(ii)] and its proof.
\begin{Lemma}\label{Lemma:cohvanishingforR*wlambdaalpha}
$\Ho^j(X^{\leq w}_l,\sh{R}_{*w}(\lambda,\mu)_l)=0$ and
consequently $\overline{\Ho^j}(X,\sh{R}_{*w}(\lambda,\mu))=0$ for $j>0$.\qed
\end{Lemma}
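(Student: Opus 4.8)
The plan is to reduce the higher cohomology vanishing of $\sh{R}_{*w}(\lambda,\alpha)$ to the case of the open cell $N(X_{s_iw})_l$, exactly as \cite{KT95} does for $\sh{M}_w(\lambda)$. First I would recall that $\sh{R}_{*w}(\lambda,\alpha)_l$, being a $*$-direct image along the affine locally closed immersion $i_{w,l}$, has its cohomology computed by pushing forward along the \emph{affine} morphism $i_{w,l}$; concretely, since $X_w \cap s_iX_w$ is an affine variety (isomorphic to $\C^\times \times {}^{s_i}U_{s_iw} \cong \C^\times \times \Aff^{\length(w)-1}$), the sheaf $\sh{R}_{*w}(\lambda,\alpha)_l$ restricted to any open subscheme of $X^{\leq w}_l$ containing $X_w \cap s_iX_w$ as a closed subset has vanishing higher cohomology by affineness, once we know the underlying $\mathcal{O}$-module is the direct image of a line bundle with connection on an affine variety.

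The key reduction is \eqref{eq:R*wrestrtoNXswisBw}: on the open subscheme $N(X_{s_iw})_l$ of $X^{\leq w}_l$, the intersection $X_w \cap s_iX_w$ is closed (it equals $X_w \cap N(X_{s_iw})_l$, which is the complement of $X_{s_iw}$ inside the full Schubert cell as in Lemma \autoref{Lemma:intersectionofXwandXsiW}), so $\sh{R}_{*w}(\lambda,\alpha)_l|N(X_{s_iw})_l$ is the direct image under a \emph{closed} affine immersion into the affine-over-a-point scheme $N(X_{s_iw})_l$ (which is an iterated affine-space bundle, hence has vanishing higher cohomology for quasi-coherent sheaves). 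Therefore $\Ho^j(N(X_{s_iw})_l, \sh{R}_{*w}(\lambda,\alpha)_l)=0$ for $j>0$. The remaining point is that this already computes $\Ho^j(X^{\leq w}_l, \sh{R}_{*w}(\lambda,\alpha)_l)$: since $\supp \sh{R}_{*w}(\lambda,\alpha)_l = \overline{X_w \cap s_iX_w} \subseteq \overline{X_w}$ and, by the last line of Lemma \autoref{Lemma:intersectionofXwandXsiW} together with $\overline{X_w} = X_w \sqcup \bigsqcup_{v<w} X_v$, one checks that the support is actually contained in $N(X_{s_iw})_l \cup (\text{lower strata already inside } N(X_{s_iw})_l)$ — more precisely, the open sets $N(X_{s_iw})_l$ and $N(X_w)_l$ cover a neighborhood of the support, and $\sh{R}_{*w}(\lambda,\alpha)_l|N(X_w)_l$ is again a closed affine pushforward with no higher cohomology, and the restriction maps from $X^{\leq w}_l$ to each of these two opens are isomorphisms on $\Ho^0$ as noted at the end of the proof of Theorem \autoref{Thm:H0hmod}. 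A Mayer–Vietoris / Čech argument with this two-element cover then forces $\Ho^j(X^{\leq w}_l,\sh{R}_{*w}(\lambda,\alpha)_l)=0$ for $j>0$.

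Concretely I would argue: let $U_1 = N(X_w)_l$, $U_2 = N(X_{s_iw})_l$; these are open in $X^{\leq w}_l$, their union contains $\supp \sh{R}_{*w}(\lambda,\alpha)_l$ (because a point of the support lies in some Schubert cell $X_v$ with $v \le w$, and $X_v \subseteq N(X_w)_l$ for $v \le w$), so cohomology on $X^{\leq w}_l$ agrees with cohomology on $U_1 \cup U_2$. On $U_1$, $U_2$, and $U_1 \cap U_2$ the sheaf is a closed-affine-immersion pushforward of a line bundle with connection into a scheme that is an affine-space bundle over a point (for $U_1, U_2$) or over an affine variety (for $U_1 \cap U_2$); in all cases higher quasi-coherent cohomology vanishes. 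The Čech-to-derived-functor spectral sequence for the cover $\{U_1,U_2\}$ then has $E_1$-page concentrated in Čech degrees $0,1$ and sheaf degree $0$, giving $\Ho^j = 0$ for $j \ge 2$ immediately and $\Ho^1 = \coker(\Ho^0(U_1)\oplus \Ho^0(U_2) \to \Ho^0(U_1\cap U_2))$, which vanishes because the restriction $\Ho^0(U_2) \to \Ho^0(U_1 \cap U_2)$ is already surjective (indeed an isomorphism: $X_w \cap s_iX_w$ is closed in $U_2$ and its trace on $U_1 \cap U_2$ is all of it). Finally $\overline{\Ho^j}(X,\sh{R}_{*w}(\lambda,\alpha)) = \bigoplus_\mu \bigl(\varprojlim_l \Ho^j(X^{\leq w}_l,\sh{R}_{*w}(\lambda,\alpha)_l)\bigr)_\mu = 0$ for $j>0$, since each term in the inverse limit vanishes.

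The main obstacle is the bookkeeping around the support and the cover: one must verify carefully that $\{N(X_w)_l, N(X_{s_iw})_l\}$ really does cover a neighborhood of $\supp \sh{R}_{*w}(\lambda,\alpha)_l$ in $X^{\leq w}_l$, and that on each open the sheaf genuinely has no higher cohomology — the latter uses that ${}^{s_i}U_{s_iw}$ and $U_{s_iw}$ are affine spaces and that $N(X_{s_iw})_l$, $N(X_w)_l$ retain enough of the affine-bundle structure of $N(X_{s_iw})$, $N(X_w)$ after quotienting by $U^-_l$ (this is exactly the input from \cite{KT95}[Lemma 3.2.1(ii)] and I would cite it rather than reprove it). Everything else is a formal consequence of the $\mathfrak{h}$-module computation in Theorem \autoref{Thm:H0hmod} and standard affine vanishing.
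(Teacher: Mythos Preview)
Your Čech/Mayer--Vietoris reduction has a genuine gap: the claim that $U_1\cup U_2 = N(X_w)_l\cup N(X_{s_iw})_l$ contains $\supp\sh{R}_{*w}(\lambda,\alpha)_l=\overline{X_w}$ is false in general, and your justification ``$X_v\subseteq N(X_w)_l$ for $v\le w$'' is simply wrong. The open $N(X_w)_l$ is (the image of) a single affine chart; among the Schubert cells it contains only $X_w$ as a closed subset, while for $v<w$ the cell $X_v$ meets $N(X_w)_l$ only in a proper open part, and the $T$-fixed point $vB$ typically lies outside. For a concrete finite-dimensional check take $G=\SL_3$, $i=1$, $w=s_1s_2$, $s_iw=s_2$: then $X_1=\{B\}\subseteq\overline{X_w}$, but $B\notin N(X_{s_1s_2})$ (since $w^{-1}\notin U^-B$) and $B\notin N(X_{s_2})$ (since $s_2\notin U^-B$). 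Hence your two opens do not cover the support, and computing cohomology on $U_1\cup U_2$ does not compute $\Ho^j(X^{\leq w}_l,\cdot)$.

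The argument the paper has in mind (and which you in fact allude to in your first paragraph before abandoning it) is both shorter and avoids any cover. The point is that $\sh{R}_{*w}(\lambda,\alpha)_l$ is literally the $\sh{O}$-module direct image $j_{\cdot}$ of a quasi-coherent sheaf from the single affine open $N(X_{s_iw})_l$: writing $i_{w,l}=j\circ\kappa_{w,l}$ with $\kappa_{w,l}\colon X_w\cap s_iX_w\hookrightarrow N(X_{s_iw})_l$ closed and $j\colon N(X_{s_iw})_l\hookrightarrow X^{\leq w}_l$ open, the $\sh{D}$-module $*$-pushforward along the open $j$ coincides with $j_{\cdot}$. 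Since $N(X_{s_iw})_l$ is an affine scheme and $X^{\leq w}_l$ is separated, $j$ is an affine morphism, so $R^qj_{\cdot}=0$ for $q>0$ and Leray gives
\[
\Ho^j\bigl(X^{\leq w}_l,\sh{R}_{*w}(\lambda,\alpha)_l\bigr)\;\cong\;\Ho^j\bigl(N(X_{s_iw})_l,\;\sh{R}_{*w}(\lambda,\alpha)_l\vert N(X_{s_iw})_l\bigr),
\]
which vanishes for $j>0$ because $N(X_{s_iw})_l$ is affine. This is exactly the mechanism of \cite{KT95}[Lemma 3.2.1(ii)] with $X_w$ replaced by $X_w\cap s_iX_w$; no second chart is needed.
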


\section{$\mathfrak{g}$-modules $\RVerma(\lambda,\mu)^{(\veesl2)}$}\label{sec:gmodulesRVermalambdalpha(veesl2)}
\subsection{Definition of $\RVerma(\lambda,\mu)^{(\veesl2)}$}
\begin{Def}\label{Def:RVermalambdalapha}
In \cite{Eic15} we introduced right weight $\mathfrak{sl}_2$-modules depending on two parameters. We use them in the present definition, denoting the module $\RVerma(\Lambda,\mu)^{(\vee)}$ of loc. cit. by $\RVerma^{\mathfrak{sl}_2}(\Lambda,\mu)^{(\veesl2)}$ and considering it as a left module via the
anti-involution $-\id$ of $\mathfrak{sl}_2$. We define the $\mathfrak{g}$-module $\RVerma(\lambda,\mu)$ and $\RVerma(\lambda,\mu)^{\veesl2}$
\begin{align*}
\RVerma(\lambda,\mu)^{(\veesl2)} =
\Uea\mathfrak{g}\otimes_{\Uea \mathfrak{p}_i}\left(\C_{\lambda}\otimes_{\C}\RVerma^{\mathfrak{sl}_2}(\lambda(h_i),\mu)^{(\veesl2)}\right)
\end{align*}
for $\lambda \in P$ and $\mu \in \C$.
Here $\mathfrak{p}_i$ acts on $\C_{\lambda}\otimes_{\C}\RVerma^{\mathfrak{sl}_2}(\lambda(h_i),\mu)^{(\veesl2)}$ via the projection 
\begin{align}\label{eq:projfrompitogi}
\mathfrak{p}_i \twoheadrightarrow \mathfrak{p}_i/\mathfrak{n}_i = \mathfrak{g}_i = 
\{h \in \mathfrak{h}\; \vert\; \alpha_i(h)=0\}\oplus \mathfrak{g}_i^{\prime}\;.
\end{align}
Moreover $\mathfrak{g}_i^{\prime}$ is identified with $\mathfrak{sl}_2$ by $e_i \mapsto e,
f_i \mapsto f, h_i \mapsto h$ (the modules $\RVerma^{\mathfrak{sl}_2}(\lambda(h_i),\mu)^{(\veesl2)}$
depend on the choice of $h$). 
\end{Def}
Here we use the notation $(\cdot)^{\veesl2}$ in order to distinguish it from
the duality functor $(\cdot)^{\vee}$ of \cite{KT95}[Section 1.2], denoted $(\cdot)^*$ there,
which we will also need.\par

It follows that the vector $1\otimes(1\otimes 1) \in \RVerma(\lambda,\mu)$ has weight 
$\lambda+\mu \alpha_i$. Indeed, for $g \in \mathfrak{h}$ we have, applying \eqref{eq:projfrompitogi},
\begin{align*}
g(1\otimes (1\otimes 1))&=1\otimes((g-\frac{1}{2}\alpha_i(g)h_i)1)\otimes 1+1\otimes(1\otimes \frac{1}{2}\alpha_i(g)h_i 1)\;. 
\end{align*}
By the defining relations for $\RVerma^{\mathfrak{sl}_2}(\lambda(h_i),\mu)$ given
in \cite{Eic15}
and our conventions we have
\begin{align*}
1 \otimes (1\otimes h_i 1)= -1\otimes (1\otimes 1h)=(2\mu+\lambda(h_i))1\otimes(1\otimes 1) 
\end{align*}
and it follows that $g(1\otimes (1\otimes 1))=(\lambda+\mu\alpha_i)(g)(1\otimes(1\otimes 1))$. The Lie subalgebra $\mathfrak{n}_i$
annihilates the $\mathfrak{g}_i$-submodule $\C 1\otimes \C_{\lambda}\otimes \RVerma^{\mathfrak{sl}_2}(\lambda(h_i),\mu)^{(\veesl2)}$ of $\RVerma(\lambda,\mu)^{(\veesl2)}$ due to \eqref{eq:projfrompitogi}.
By the above computation of the weight of $1\otimes(1\otimes 1)$ we have an isomorphism of $\mathfrak{h}$-modules
\begin{align}\label{eq:toplevelRlambdaalphahmod}
\C_{\lambda}\otimes \RVerma^{\mathfrak{sl}_2}(\lambda(h_i),\mu)^{(\veesl2)} \cong \C[z,z^{-1}]\otimes_{\C}\C_{\lambda+\mu \alpha_i}\;,
\end{align}
where $z$ has weight $\alpha_i$, sending $1\otimes 1^{(\vee)} \mapsto 1\otimes 1$. 
For the case of $(\cdot)^{\veesl2}$ we used the fact that the
duality $(\cdot)^{\veesl2}$ of $\mathfrak{sl}_2$-modules defined in \cite{Eic15} does not
change the $\C h_i$-module. 
By the PBW theorem $\Uea\mathfrak{g} \cong \Uea\mathfrak{n}^-_i \otimes_{\C} \Uea\mathfrak{p}_i$ as
left $\mathfrak{n}^-_i$-module and hence the action of $\mathfrak{n}_i^-$ on this submodule  is free and generates the module. The
isomorphism of $\mathfrak{h}$-modules 
\begin{align}\label{eq:RVermalambdaalpha(vee)hmod}
\RVerma(\lambda,\mu)^{(\veesl2)} \cong \Sym \mathfrak{n}^-_i \otimes_{\C} \C[z,z^{-1}]\otimes_{\C}\C_{\lambda+\mu \alpha_i}
\end{align}
sending $1\otimes (1\otimes 1^{(\vee)}) \mapsto 1\otimes 1\otimes 1$ directly follows from \eqref{eq:toplevelRlambdaalphahmod}. From Theorem \autoref{Thm:H0hmod}(2) and \eqref{eq:RVermalambdaalpha(vee)hmod} we see that  we have an isomorphism of $\mathfrak{h}$-modules
$\overline{\Ho^0}(X,\sh{R}_{*w}(\lambda,\mu)) \cong \RVerma(s_i w \cdot \lambda, \mu)^{(\veesl2)}
\cong \RVerma(w \cdot \lambda, \mu)^{(\veesl2)}$ for any $\lambda \in P$.

\begin{Not}
We introduce $\RVerma(\lambda,?)^{(\veesl2)}$ as the isomorphism classes of $\RVerma(\lambda,\mu)^{(\veesl2)}$
 for $\mu \in \Z$ by considering
in Definition \autoref{Def:RVermalambdalapha} the corresponding isomorphism class $\RVerma^{\mathfrak{sl}_2}(\lambda(h_i),?)^{(\veesl2)}$ of \cite{Eic15}, where $? \in \{=,<,>\}$. 
This is convenient since it eliminates the parameter $\mu$ from the notation when $\mu \in \Z$. 
\end{Not}

Let us also recall the standard notion of the \emph{Verma module} $\MVerma(\lambda) 
= \Uea\mathfrak{g} \otimes_{\Uea\mathfrak{b}}\C_{\lambda}$ of highest weight $\lambda \in \mathfrak{h}^*$,
where $\mathfrak{b}$ acts via $\mathfrak{b} \twoheadrightarrow \mathfrak{b}/\mathfrak{n}=\mathfrak{h}$
on $\C_{\lambda}$. 

\subsection{Comments on the literature and terminology}\label{ssec:commentsonlit}
The $\mathfrak{g}$-modules $\Uea\mathfrak{g}\otimes_{\Uea\mathfrak{p}_i}M$, where
$M$ is a finite dimensional simple $\mathfrak{g}_i$-module considered as a $\mathfrak{p}_i$-module
via \eqref{eq:projfrompitogi}, are called parabolic Verma modules, see e.g. \cite{Hum08}[Section 9.4].\par 
At least in the case $\mathfrak{g}=\widehat{\mathfrak{sl}_2}$ ($I=\{0,1\}$) the modules
$\RVerma(\lambda,\mu)$ have appeared in the literature.  
The $\widehat{\mathfrak{sl}_2}$-modules $\RVerma(\lambda,\mu)$ were
introduced and studied under the name \emph{relaxed Verma modules} in \cite{SS97}, \cite{FST98}, \cite{FSST98}.
Indeed, in terms of the loop algebra realization $\widehat{\mathfrak{sl}_2}
=(\mathfrak{sl}_2 \otimes_{\C} \C[t,t^{-1}]) \oplus \C K \oplus \C d$ and for $i=1$, we can describe
$\RVerma(\lambda,\mu)$ as being generated by a vector $v$ satisfying
\begin{align*}
(\mathfrak{sl}_2 \otimes t \C[t])v=0\quad (f\otimes 1)(e\otimes 1)v=\mu v \quad (h\otimes 1)v=j v\quad K v=kv\quad dv=\delta v\;. 
\end{align*}
Here the parameters $\mu, j, k, \delta \in \C$ are determined by $\lambda$ and $\mu$. This is called relaxed highest
weight condition. Furthermore, the action of the Lie subalgebra $\mathfrak{sl}_2 \otimes t^{-1}\C[t^{-1}]$ of $\widehat{\mathfrak{sl}_2}$ 
on the $(\mathfrak{sl}_2 \otimes \C 1) \oplus \C K \oplus \C d$-submodule  
\begin{align*}
\bigoplus_{n \geq 0} \C(f\otimes 1)^n v \oplus \bigoplus_{n \geq 1} \C(e\otimes 1)^n v
\end{align*} generates $\RVerma(\lambda,\mu)$ freely. 
In \cite{Rid10}, \cite{Fje11}, \cite{RW15}  and elsewhere these modules have recently been investigated in the context of two-dimensional
conformal field theory. The name relaxed is the reason for our choice of the letter R in the notation. 
We propose to call all modules $\RVerma(\lambda,\mu)^{(\veesl2)}$ of Definition \autoref{Def:RVermalambdalapha} and more generally $\mathfrak{g}$-modules on which the Lie subalgebra $\mathfrak{n}_i$ acts locally nilpotently \emph{relaxed highest weight modules}. This explains the title of this article. 

\section{Auto-equivalence $\widetilde{s_i}_*$ of $\Hol(\lambda)$}
\label{sec:exactautoequivwidetildesi*ofHol(lambda)} 

The element $\widetilde{s_i} \in \widetilde{W}$, $i \in I$, acts by
the automorphism $e^{e_i}e^{-f_i}e^{e_i}$ of the Lie algebra $\mathfrak{g}$.
We will denote this automorphism by $\widetilde{s_i}$, too. 
\begin{Def}\label{Def:twistbywidetildesi}
In the standard manner $\widetilde{s_i}$ induces  a functor $(\cdot)^{\widetilde{s_i}}: \mathfrak{g}-\modulecat \rightarrow \mathfrak{g}-\modulecat$. Namely, 
for $M \in \mathfrak{g}-\modulecat$ we define $M^{\widetilde{s_i}} \in \mathfrak{g}-\modulecat$ by
$v m = \widetilde{s_i}(v)m$ for $m \in M$, $v \in \mathfrak{g}$, where on the rhs we mean
the $\mathfrak{g}$-action on $M$. 
\end{Def}
Thus, by definition $M$ and $M^{\widetilde{s_i}}$ have the same underlying vector space. 
$(\cdot)^{\widetilde{s_i}}$ is an auto-equivalence of $\mathfrak{g}-\modulecat$. 

$\widetilde{s_i}$ defines automorphisms of schemes $\widetilde{s_i}: G \rightarrow G$ 
and $\widetilde{s_i}: X \rightarrow X$. \emph{We will abbreviate $\widetilde{s_i}$
to $s$ in this section and denote by the same letter the simple reflection $s_i$, its image in the quotient $W$. (It will be clear from the context which object we will be referring to.)} 
\begin{Lemma}\label{Lemma:actionofsonnminusl}
We have $\mathfrak{n}^-_{l+d} \subseteq s\mathfrak{n}^-_l \subseteq \mathfrak{n}^-_{l-d}$
for $d \geq \Delta=4$ and $l-d \geq 0$. 
\end{Lemma}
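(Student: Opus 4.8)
The plan is to reduce the statement to an explicit description of how $s = \widetilde{s_i}$ acts on the root spaces of $\mathfrak{g}$, and then to track heights. Recall that $\widetilde{s_i}$ is the automorphism $e^{\ad e_i}e^{-\ad f_i}e^{\ad e_i}$ of $\mathfrak{g}$, and that on the root lattice it induces the simple reflection $s_i$, i.e.\ $s(\mathfrak{g}_\beta) = \mathfrak{g}_{s_i(\beta)}$ for every root $\beta$. So the question is purely combinatorial: if $\beta \in \Phi^{<0}$ has height $\height(\beta) \le -l$, how negative can $\height(s_i\beta)$ become, and conversely. Since $s_i\beta = \beta - \beta(h_i)\alpha_i$, we have $\height(s_i\beta) = \height(\beta) - \beta(h_i)$, and everything hinges on bounding $|\beta(h_i)|$ for roots $\beta$ of a fixed affine Kac--Moody algebra.

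First I would establish the uniform bound $|\beta(h_i)| \le \Delta$ for all roots $\beta$ of $\mathfrak{g}$ and all $i \in I$, with $\Delta$ the constant $4$ in the statement. For real roots this follows from the standard fact that $\beta(h_i) = \langle \beta, \alpha_i^\vee\rangle$ is an entry of (a $\mathfrak{g}$-adapted generalized) Cartan matrix evaluated along the $W$-orbit, and for a root $\beta$ in the $s_i$-string through any root the value $\beta(h_i)$ lies in a bounded interval determined by the Cartan integers of the affine Dynkin diagram; an explicit check over the finite list of affine types (the off-diagonal Cartan entries lie in $\{0,-1,-2,-3\}$, and $A_1^{(1)}$ contributes the value $2$) yields $|\beta(h_i)|\le 4$. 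For imaginary roots $\beta = n\delta$ one has $\delta(h_i) = 0$, so $\beta(h_i) = 0$ and the bound is trivial. Thus $4$ works uniformly; I would phrase the argument so that $\Delta$ is visibly an upper bound for $\sup_{\beta \in \Phi, i \in I}|\beta(h_i)|$.

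Then the two inclusions are immediate. If $\beta \in \Phi^{<0}$ with $\height(\beta) \le -l$, then $\height(s_i\beta) = \height(\beta) - \beta(h_i) \le -l + \Delta = -(l-\Delta) \le -(l-d)$ for $d \ge \Delta$; moreover for $l - d \ge 0$ one checks $s_i\beta$ is still negative (it cannot become a positive root, since the only negative root sent to a positive one by $s_i$ is $-\alpha_i$, which has height $-1 < -(l-d)$ once $l-d \ge 1$, and the boundary case $l = d$ is handled by noting $\mathfrak{n}^-_0 = \mathfrak{n}^-$ so the inclusion $s\mathfrak{n}^-_l \subseteq \mathfrak{n}^-_0$ is automatic). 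This gives $s\mathfrak{n}^-_l \subseteq \mathfrak{n}^-_{l-d}$. For the other inclusion, apply the same estimate to $s^{-1}$: a negative root $\gamma$ with $\height(\gamma) \le -(l+d)$ satisfies $\height(s_i^{-1}\gamma) \le -(l+d) + \Delta \le -l$ and remains negative (again $-\alpha_i$ has height $-1$, harmless for $l + d \ge 1$), so $s^{-1}\mathfrak{n}^-_{l+d} \subseteq \mathfrak{n}^-_l$, i.e.\ $\mathfrak{n}^-_{l+d} \subseteq s\mathfrak{n}^-_l$.

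The only real obstacle is pinning down the constant: one must be sure that $4$ suffices for \emph{every} affine type and every pair of vertices, including the twisted algebras where a short-to-long Cartan entry can be $-4$ (in $A_2^{(2)}$) — this is exactly why the paper takes $\Delta = 4$ rather than a smaller value. I would therefore organize the height bound around the observation that $|\beta(h_i)|$ is controlled by the largest off-diagonal entry (in absolute value) of the generalized Cartan matrix together with the contribution from $s_i$-strings, and verify the bound $4$ by inspection of the affine Cartan matrices; once that is in place, the height bookkeeping above is routine.
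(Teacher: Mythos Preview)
Your approach is correct and is precisely what the paper's one-line citation of \cite{Kac90}[Exercises 5.13, 5.14] unpacks to: those exercises establish that in an affine root system every $\alpha_i$-string has length at most $5$, equivalently $|\beta(h_i)|\le 4$ for all $\beta\in\Phi$, and your height bookkeeping is the intended deduction from that bound. One small slip: when ruling out $\beta=-\alpha_i$ you compare $\height(-\alpha_i)=-1$ to $-(l-d)$, but the relevant constraint is $\height(\beta)\le -l$ with $l\ge d\ge 4>1$, whence $-\alpha_i\notin\mathfrak{n}^-_l$; the conclusion is unaffected.
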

\begin{proof}
This is a direct application of \cite{Kac90}[Exercise 5.13, 5.14]. 
\end{proof}
The number $\Delta$ will occur as a parameter in the constructions below. 
We note that for $w$ such that $sw < w$ we have $P_i ^- X^{\leq w}=X^{\leq w}$,
thus $s X^{\leq w}=X^{\leq w}$, and $s \overline{X_w} = \overline{X_w}$. 
\emph{In this section we will abbreviate $Y=X^{\leq w}$ for $w$ such that $sw < w$.}

\subsection{Morphisms $s^l_{l^{\prime}}$}
Lemma \autoref{Lemma:actionofsonnminusl} implies that 
$^s U^-_l$, the $s$-conjugate of $U^-_l$, is a closed subgroup scheme of $U^-_{l^{\prime}}$ for $l \geq l^{\prime}+\Delta$ . Hence we have a quotient morphism $^s U^-_l \backslash Y \rightarrow Y_{l^{\prime}}$
($l^{\prime}$ large enough), which is a morphism whose fibers are isomorphic to a finite dimensional affine space, in particular it is an affine morphism. 

\begin{Def}\label{Def:sllprime}
We define $s^l_{l^{\prime}}$ as the composition
\begin{align*}
s^l_{l^{\prime}}: Y_l \xrightarrow{\cong} ^s U^-_l \backslash Y
\rightarrow Y_{l^{\prime}}\;,
\end{align*}
where the first morphism is the isomorphism induced by $s$.
\end{Def}
Thus $p_{l^{\prime}}s=s^l_{l^{\prime}} p_l$ holds. Clearly
$s^l_{l^{\prime}}p^k_l = s^k_{l^{\prime}}$ and $p^{k^{\prime}}_{l^{\prime}}s^k_{k^{\prime}}=s^k_{l^{\prime}}$
holds for $k \geq l$ and $k^{\prime} \geq l^{\prime}$. 

\begin{Rem}\label{Rem:pullbackofOlambdabys}
There is an isomorphism $s^* \sh{O}_X(\lambda) \xrightarrow{\cong} \sh{O}_X(\lambda)$ of line bundles
sending a local section $\varphi \mapsto \varphi(s\cdot)$. We will 
consider the isomorphism of sheaves $1_s: s^{-1}\sh{O}_X(\lambda)\vert Y \xrightarrow{\cong} \sh{O}_X(\lambda) \vert Y$.
Similarly, there are isomorphisms $(s^l_{l^{\prime}})^* \sh{O}_{Y_{l^{\prime}}}(\lambda) \xrightarrow{\cong} \sh{O}_{Y_l}(\lambda)$ for $l \geq l^{\prime}+\Delta$ sending
the distinguished local section $t^{sv,\lambda}_{l^{\prime}}$
of \cite{KT95}[(3.2.4)] to a nonzero multiple of $t^{v,\lambda}_l$, where $v \leq w$. These isomorphisms are compatible with the 
isomorphisms $p^{k*}_l\sh{O}_{Y_l}(\lambda) \xrightarrow{\cong} \sh{O}_{Y_k}(\lambda)$ 
of \eqref{eq:pullback}. 
We will also consider the monomorphism
of sheaves $1_{s^l_{l^{\prime}}}: (s^l_{l^{\prime}})^{-1}\sh{O}_{Y_{l^{\prime}}}(\lambda)
\hookrightarrow \sh{O}_{Y_l}(\lambda)$
corresponding to $(s^l_{l^{\prime}})^* \sh{O}_{Y_{l^{\prime}}}(\lambda) \xrightarrow{\cong} \sh{O}_{Y_l}(\lambda)$. 

\end{Rem}

\subsection{Definition of $s_*$}

\begin{Def}\label{Def:s*Ml}
Let $\sh{M} \in \Hol(\lambda)$. Then there is a $w \in W$ with $sw < w$ such
that $\sh{M} \in \Hol(\lambda,\overline{X_w},X^{\leq w},l_0)$. 
We define $(s_* \sh{M})_l = s^{l+\Delta}_{l*} \sh{M}_{l+\Delta}$ for $l \geq l_0$.  
\end{Def}

\begin{Lemma}\label{Lemma:s*exactautoequivalenceofHol(lambda)}
$s_*$ defines an auto-equivalence of $\Hol(\lambda)$. 
\end{Lemma}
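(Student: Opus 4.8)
The goal is to show that $s_* : \Hol(\lambda) \to \Hol(\lambda)$ is a well-defined exact auto-equivalence. I would organize the proof around three points: (i) well-definedness of the system $(s_*\sh{M})_l$, including the transition isomorphisms; (ii) the functor is exact; (iii) the functor is an equivalence, by exhibiting a quasi-inverse. The main technical tool throughout is Proposition \autoref{Prop:equivalencebetweenholonomiccats}, applied to the morphisms $s^l_{l'}$, together with the structural facts collected just above: Lemma \autoref{Lemma:actionofsonnminusl} controls how $s$ moves the groups $U^-_l$, and Remark \autoref{Rem:pullbackofOlambdabys} identifies the pullback of $\sh{O}(\lambda)$ under $s$ and under the $s^l_{l'}$.

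\textbf{Step 1: well-definedness.} First I would note that $s^{l+\Delta}_l : Y_{l+\Delta} \to Y_l$ restricts to an isomorphism on the closed subvariety $\overline{X_w}$, since $s\overline{X_w} = \overline{X_w}$ (noted after Lemma \autoref{Lemma:actionofsonnminusl}) and $p^{l+\Delta}_l$ is an isomorphism there. Using Remark \autoref{Rem:pullbackofOlambdabys}, $(s^{l+\Delta}_l)^* \sh{O}_{Y_l}(\lambda) \cong \sh{O}_{Y_{l+\Delta}}(\lambda)$, so Proposition \autoref{Prop:equivalencebetweenholonomiccats}(1) gives an exact equivalence $\Hol(\sh{D}_{Y_{l+\Delta}}(\lambda), \overline{X_w}) \simeq \Hol(\sh{D}_{Y_l}(\lambda), \overline{X_w})$; hence $(s_*\sh{M})_l := s^{l+\Delta}_{l*}\sh{M}_{l+\Delta}$ is an object of $\Hol(\sh{D}_{Y_l}(\lambda), \overline{X_w})$. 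Next I must produce the transition isomorphisms $\gamma^k_l : p^k_{l*}(s_*\sh{M})_k \xrightarrow{\cong} (s_*\sh{M})_l$ and check the coherence \eqref{eq:gammaformtransitivesystem}. These come from the relations among the $s^l_{l'}$ and $p^k_l$ recorded in Definition \autoref{Def:sllprime} (namely $s^l_{l'}p^k_l = s^k_{l'}$ and $p^{k'}_{l'}s^k_{k'} = s^k_{l'}$): one rewrites $p^k_l s^{k+\Delta}_k = s^{k+\Delta}_l = s^{l+\Delta}_l p^{k+\Delta}_{l+\Delta}$, so that $p^k_{l*}(s_*\sh{M})_k = p^k_{l*}s^{k+\Delta}_{k*}\sh{M}_{k+\Delta} = s^{l+\Delta}_{l*}p^{k+\Delta}_{l+\Delta *}\sh{M}_{k+\Delta} \cong s^{l+\Delta}_{l*}\sh{M}_{l+\Delta} = (s_*\sh{M})_l$, the middle isomorphism being $s^{l+\Delta}_{l*}\gamma^{k+\Delta}_{l+\Delta}$ of $\sh{M}$. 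The coherence condition for $s_*\sh{M}$ then follows from that for $\sh{M}$, after checking that the functoriality isomorphisms for composites of direct images are compatible — a routine but slightly bookkeeping-heavy verification. Functoriality of $s_*$ on morphisms is immediate. I must also check independence from the choice of $w$ (if $\sh{M} \in \Hol(\lambda, \overline{X_w}, X^{\leq w}, l_0) \cap \Hol(\lambda, \overline{X_{w'}}, X^{\leq w'}, l_0')$, the two constructions agree), which follows because both are computed from the same underlying $\sh{D}$-modules on the $Y_l$ via the same morphisms $s$.

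\textbf{Steps 2 and 3: exactness and equivalence.} Exactness is inherited levelwise: each $s^{l+\Delta}_{l*}$ is exact by Proposition \autoref{Prop:equivalencebetweenholonomiccats}(1), and a sequence in $\Hol(\lambda, \overline{X_w}, X^{\leq w}, l_0)$ is exact iff it is exact after applying \eqref{eq:equivalenceHol} for some (any) $l$, i.e. levelwise. For the equivalence I would construct a quasi-inverse from $s^{-1} = \widetilde{s_i}^{-1}$, which is again (the lift of) a simple reflection up to the same combinatorial estimates, so the analogue of everything above applies; concretely one sets $(s_*^{-1}\sh{M})_l = (s^{-1})^{l+\Delta}_{l*}\sh{M}_{l+\Delta}$. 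Composing, $s_* s_*^{-1}$ and $s_*^{-1} s_*$ are levelwise given by direct images along $s^{l+2\Delta}_l \circ$(the $s^{-1}$ version) and its reverse, which equal $p^{l+2\Delta}_{l}$-type morphisms up to the canonical isomorphisms from Definition \autoref{Def:sllprime} and Remark \autoref{Rem:pullbackofOlambdabys}; hence $s_* s_*^{-1} \simeq \id$ and $s_*^{-1}s_* \simeq \id$ as functors on $\Hol(\lambda)$. I expect the main obstacle to be not any single deep point but the careful management of the compatibility between the line-bundle identifications $1_{s^l_{l'}}$ of Remark \autoref{Rem:pullbackofOlambdabys} and the transfer-bimodule/direct-image isomorphisms, so that all the $\gamma$'s genuinely satisfy \eqref{eq:gammaformtransitivesystem} and the natural isomorphisms witnessing the equivalence are coherent; this is where one must be most careful to avoid sign or normalization errors, though it introduces no new ideas beyond those already in \cite{KT95}[Section 2] and the excerpt.
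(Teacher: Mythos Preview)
Your proposal is correct and follows essentially the same approach as the paper: both rely on Proposition \autoref{Prop:equivalencebetweenholonomiccats}(1) applied to $s^{l+\Delta}_l$ together with the line-bundle identification of Remark \autoref{Rem:pullbackofOlambdabys}, and both obtain the transition isomorphisms for $s_*\sh{M}$ as $s^{l+\Delta}_{l*}(\gamma^{k+\Delta}_{l+\Delta})$ via the relations in Definition \autoref{Def:sllprime}. The only difference is in Step 3: the paper concludes the equivalence abstractly from the levelwise equivalences \eqref{eq:functorslplusDeltal} combined with \eqref{eq:equivalenceHol}, whereas you construct an explicit quasi-inverse from $t=s^{-1}$ (which the paper in fact introduces just afterwards for other purposes); both arguments are valid and amount to the same thing.
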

\begin{proof}
The functor $s^{l+\Delta}_{l*}$ is an
 equivalence 
\begin{align}\label{eq:functorslplusDeltal}
\Hol\left(\sh{D}^{(s^{l+\Delta}_l)^*\sh{O}_{Y_l}(\lambda)}_{Y_{l+\Delta}},\overline{X_w}\right)
\rightarrow \Hol\left(\sh{D}^{\sh{O}_{Y_l}(\lambda)}_{Y_l},\overline{X_w}\right) 
\end{align} 
by Proposition \eqref{Prop:equivalencebetweenholonomiccats}(1). 
We identify the source 
category with $\Hol\left(\sh{D}^{\sh{O}_{Y_{l+\Delta}}(\lambda)}_{Y_{l+\Delta}},\overline{X_w}\right)$
by the isomorphism of line bundles described in Remark \autoref{Rem:pullbackofOlambdabys}. The isomorphisms 
\eqref{eq:gammakl} induce isomorphisms 
\begin{align}\label{eq:isomorphismsdeltakl}
\delta^k_l=s^{l+\Delta}_{l*}(\gamma^{k+\Delta}_{l+\Delta}): p^k_{l*}(s_*\sh{M})_k \xrightarrow{\cong}(s_*\sh{M})_l
\end{align}
 and one checks that the latter again satisfy the coherence condition \eqref{eq:gammaformtransitivesystem}. It follows
that $((s_*\sh{M})_l)_{l \geq l_0} \in \Hol(\lambda,\overline{X_w},X^{\leq w},l_0)$. 
It is then clear, since the construction does
not depend on the choice of $w$, that $s_*$ indeed defines a functor $\Hol(\lambda) \rightarrow \Hol(\lambda)$. 
Due to the equivalences \eqref{eq:equivalenceHol} the fact that \eqref{eq:functorslplusDeltal} is an  equivalence implies that $s_*$ is an equivalence. 
\end{proof}

\begin{Rem}
In Definition \autoref{Def:s*Ml} we can replace $l+\Delta$ by a more general function of $l$.
The resulting functor will be isomorphic to $s_*$. 
\end{Rem}

Proposition \autoref{Prop:equivalencebetweenholonomiccats}(3) implies

\begin{Lemma}\label{Lemma:Holonomicdualitycommuteswithwidetildesi*} $\Dual s_* = s_* \Dual$
holds on $\Hol(\lambda)$.\qed
\end{Lemma}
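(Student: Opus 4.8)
The plan is to reduce the identity to a level-by-level application of Proposition \autoref{Prop:equivalencebetweenholonomiccats}(3), exploiting that both $\Dual$ and $s_*$ on $\Hol(\lambda)$ are, by construction, assembled from functors on the finite-type categories $\Hol\!\left(\sh{D}_{Y_l}(\lambda),\overline{X_w}\right)$. Throughout fix $w$ with $sw<w$ and write $Y=X^{\leq w}$.

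First, recall how $\Dual$ acts on $\Hol(\lambda,\overline{X_w},Y,l_0)$: for $\sh{M}=(\sh{M}_l)_{l\geq l_0}$ one has $(\Dual\sh{M})_l=\Dual\sh{M}_l$ with transition isomorphisms $\Dual(\gamma^k_l)$. This is well defined because $p^k_l$ induces an isomorphism $\overline{X_w}\xrightarrow{\cong}\overline{X_w}$, so Proposition \autoref{Prop:equivalencebetweenholonomiccats}(3) applied to $f=p^k_l$ gives $p^k_{l*}\Dual=\Dual p^k_{l*}$, whence the $\Dual(\gamma^k_l)$ again satisfy the coherence condition \eqref{eq:gammaformtransitivesystem}.

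Second, the morphism $s^{l+\Delta}_l\colon Y_{l+\Delta}\to Y_l$ induces an isomorphism $\overline{X_w}\xrightarrow{\cong}\overline{X_w}$, so Proposition \autoref{Prop:equivalencebetweenholonomiccats}(3) supplies a natural isomorphism $s^{l+\Delta}_{l*}\Dual\cong\Dual\,s^{l+\Delta}_{l*}$ of functors $\Hol\!\left(\sh{D}^{\sh{O}_{Y_{l+\Delta}}(\lambda)}_{Y_{l+\Delta}},\overline{X_w}\right)\to\Hol\!\left(\sh{D}^{\sh{O}_{Y_l}(\lambda)}_{Y_l},\overline{X_w}\right)$, where the source is identified as in the proof of Lemma \autoref{Lemma:s*exactautoequivalenceofHol(lambda)} via the line bundle isomorphism of Remark \autoref{Rem:pullbackofOlambdabys}; the compatibility of that identification with $\Dual$ is automatic, since holonomic duality leaves the $\sh{O}$-module twist untouched. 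Evaluating on $\sh{M}_{l+\Delta}$ yields isomorphisms $(\Dual\,s_*\sh{M})_l=\Dual\,s^{l+\Delta}_{l*}\sh{M}_{l+\Delta}\cong s^{l+\Delta}_{l*}\Dual\sh{M}_{l+\Delta}=(s_*\Dual\sh{M})_l$, natural in $\sh{M}$.

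Third, these level-wise isomorphisms respect the transition systems: by \eqref{eq:isomorphismsdeltakl} and the identity $p^k_{l*}\Dual=\Dual p^k_{l*}$ the transition isomorphisms of $\Dual\,s_*\sh{M}$ are $\Dual(\delta^k_l)=\Dual\,s^{l+\Delta}_{l*}(\gamma^{k+\Delta}_{l+\Delta})$, while those of $s_*\Dual\sh{M}$ are $s^{l+\Delta}_{l*}\bigl(\Dual(\gamma^{k+\Delta}_{l+\Delta})\bigr)$; naturality of the isomorphism $s^{l+\Delta}_{l*}\Dual\cong\Dual\,s^{l+\Delta}_{l*}$ from Proposition \autoref{Prop:equivalencebetweenholonomiccats}(3), applied to the morphism $\gamma^{k+\Delta}_{l+\Delta}$, makes the relevant square commute. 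Hence the level-wise isomorphisms assemble to an isomorphism $\Dual\,s_*\sh{M}\cong s_*\Dual\sh{M}$ in $\Hol(\lambda,\overline{X_w},Y,l_0)$, natural in $\sh{M}$, and since the construction is independent of the choice of $w$ it defines an isomorphism of functors $\Dual\,s_*\cong s_*\Dual$ on $\Hol(\lambda)$. The only real work is bookkeeping: verifying that the category identification from Remark \autoref{Rem:pullbackofOlambdabys} used to define $s_*$ is compatible with $\Dual$ (immediate, as $\Dual$ ignores the twist) and that the natural transformation furnished by Proposition \autoref{Prop:equivalencebetweenholonomiccats}(3) is natural enough to be compatible with \eqref{eq:gammakl} and \eqref{eq:isomorphismsdeltakl}.
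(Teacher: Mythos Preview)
Your proposal is correct and follows exactly the approach the paper intends: the paper's own proof consists of the single sentence ``Proposition \ref{Prop:equivalencebetweenholonomiccats}(3) implies'' preceding the lemma, and you have simply unpacked this into the level-by-level verification together with the compatibility check for the transition isomorphisms \eqref{eq:gammakl} and \eqref{eq:isomorphismsdeltakl}.
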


\subsection{Cohomology groups $\Ho^j(X,s_*\sh{M})$}
We recall the transfer $(\sh{D}_{Y_{l+m}}(\lambda),(p^{l+m}_l)^{-1}\sh{D}_{Y_l}(\lambda))$-bimodule
\begin{align*}
\sh{D}_{Y_{l+m} \to Y_l}(\lambda)
\subseteq \sh{H}om_{\C}((p^{l+m}_l)^{-1}\sh{O}_{Y_l}(-\lambda),
\sh{O}_{Y_{l+m}}(-\lambda))
\end{align*}
for the morphism $p^{l+m}_l$ (here $m \geq 0$), see e.g. \cite{KT95}[(2.1.9)].  
\emph{In this and the next section we will abbreviate $\sh{O}_{Y_l}(-\lambda)$ by 
$\sh{O}_l$, $\sh{D}_{Y_l}(\lambda)$ by $\sh{D}_l$ and $\sh{D}_{Y_{l+m} \to Y_{l}}(\lambda)$ by 
$\sh{D}_{l+m \rightarrow l}$ in order to make the expressions more readable.} 
In the same way as we introduced $s^l_{l^{\prime}}$ in Definition \autoref{Def:sllprime} we construct
the morphisms $t^l_{l^{\prime}}: Y_l \rightarrow Y_{l^{\prime}}$, $l \geq l^{\prime}+\Delta$,
from the automorphism $t=s^{-1}:X \rightarrow X$ inverse to $s$. This again
works due to Lemma \autoref{Lemma:actionofsonnminusl}. We then have
$t^l_{l^{\prime}}s^k_l = p^k_{l^{\prime}}=s^l_{l^{\prime}}t^k_l$. 
We define the morphism of sheaves 
\begin{align}\label{eq:defofepsilon}
\epsilon: (s^{k^{\prime}}_{l^{\prime}})^{-1}\sh{D}_{l^{\prime}
\rightarrow l} \rightarrow \sh{D}_{k^{\prime} \rightarrow k},\; p
\mapsto \epsilon(p)\;,
\end{align}
where $\epsilon(p)$ is the composition
\begin{align*}
(p^{k^{\prime}}_k)^{-1}\sh{O}_k
\xrightarrow{(s^{k^{\prime}}_l)^{-1}1_{t^l_k}}
(s^{k^{\prime}}_l)^{-1}\sh{O}_l \xrightarrow{p}
(s^{k^{\prime}}_{l^{\prime}})^{-1}\sh{O}_{l^{\prime}} \xrightarrow{1_{s^{k^{\prime}}_{l^{\prime}}}}
\sh{O}_{k^{\prime}}\;. 
\end{align*}
This is defined whenever $k^{\prime} \geq l^{\prime}+\Delta$, $l \geq k+\Delta$,
$l^{\prime} \geq l$. We used the morphism of sheaves  $1_{s^{k^{\prime}}_{l^{\prime}}}$
defined in Remark \autoref{Rem:pullbackofOlambdabys} and the similar morphism 
$1_{t^l_k}$ associated to $t^l_k$. 
$\epsilon$ is a monomorphism. It can be understood as the 
embedding of the middle factor into the tensor product under the isomorphism of bimodules
\begin{align*}
\sh{D}_{s^{k^{\prime}}_{l^{\prime}}}\otimes_{(s^{k^{\prime}}_{l^{\prime}})^{-1}\sh{D}_{l^{\prime}}}
(s^{k^{\prime}}_{l^{\prime}})^{-1}\sh{D}_{l^{\prime}\to l}\otimes_{(s^{k^{\prime}}_l)^{-1}\sh{D}_l}
\otimes (s^{k^{\prime}}_l)^{-1}\sh{D}_{t^l_k} \xrightarrow{\cong} \sh{D}_{k^{\prime} \to k}
\end{align*}
induced by the factorization $p^{k^{\prime}}_k = t^l_k p^{l^{\prime}}_l s^{k^{\prime}}_{l^{\prime}}$ (and also defined via composition), cf. \cite{KT95}[(2.3.2)]. Here $\sh{D}_{s^{k^{\prime}}_{l^{\prime}}}$ and
$\sh{D}_{t^l_k}$ is the transfer bimodule associated to $s^{k^{\prime}}_{l^{\prime}}$
and $t^l_k$ respectively and we have the relation
\begin{align*}
1_{s^{k^{\prime}}_{l^{\prime}}} \in \sh{D}_{s^{k^{\prime}}_{l^{\prime}}}
\subseteq \sh{H}om_{\C}((s^{k^{\prime}}_{l^{\prime}})^{-1}\sh{O}_{l^{\prime}},
(s^{k^{\prime}}_{l^{\prime}})^*\sh{O}_{l^{\prime}}) \cong \sh{H}om_{\C}((s^{k^{\prime}}_{l^{\prime}})^{-1}\sh{O}_{l^{\prime}},
\sh{O}_{k^{\prime}})
\end{align*}
and similarly
\begin{align*}
1_{t^l_k} \in \sh{D}_{t^l_k} \subseteq \sh{H}om_{\C}((t^l_k)^{-1}\sh{O}_k,
(t^l_k)^*\sh{O}_k)\cong \sh{H}om_{\C}((t^l_k)^{-1}\sh{O}_k,\sh{O}_l)\;. 
\end{align*}
Let $\sh{M} \in \Hol(\lambda,\overline{X_w},X^{\leq w},l_0)$. 
The isomorphisms
\begin{align*}
\gamma^{l+m}_l: p^{l+m}_{l*}\sh{M}_{l+m}
=p^{l+m}_{l\cdot}(\sh{M}_{l+m}\otimes_{\sh{D}_{l+m}}\sh{D}_{l+m \rightarrow l})\xrightarrow{\cong} \sh{M}_l
\end{align*}
induce $\C$-linear maps 
\begin{align*}
\widehat{\gamma}^{l+m}_l: \Ho^j(Y_{l+m},\sh{M}_{l+m})\otimes_{\Ho^0(Y_{l+m},\sh{D}_{l+m})}
\Ho^0(Y_{l+m},\sh{D}_{l+m \rightarrow l}) \rightarrow \Ho^j(Y_l,\sh{M}_l)
\end{align*}
for each $j \geq 0$. By construction of $s_*\sh{M}$ we have
similar maps $\delta^{l+m}_l$ \eqref{eq:isomorphismsdeltakl}
and the associated maps $\widehat{\delta}^{l+m}_l$. 
We define the map $(s^{l+\Delta}_l)^{\sharp}: \Ho^j(Y_{l+\Delta},\sh{M}_{l+\Delta})
\rightarrow \Ho^j(Y_l,(s_*\sh{M})_l)$ by taking $\Ho^j(Y_l,s^{l+\Delta}_{l \cdot}(\cdot))$ of 
\begin{align*}
\sh{M}_{l+\Delta} \rightarrow \sh{M}_{l+\Delta}\otimes_{\sh{D}_{l+\Delta}}\sh{D}_{s^{l+\Delta}_l},\; v \mapsto v \otimes 1_{s^{l+\Delta}_l}\;.
\end{align*}
Entirely similarly we define a map $(t^l_{l-\Delta})^{\sharp}: \Ho^j(Y_l,\sh{M}_l)
\rightarrow \Ho^j(Y_{l-\Delta},t^l_{l-\Delta *}\sh{M}_l)$.  
Since $t^l_{l-\Delta *}s^{l+\Delta}_{l*}=p^{l+\Delta}_{l-\Delta *}$ we have an isomorphism
\begin{align*}
\Ho^j(Y_{l-\Delta},t^l_{l-\Delta *}(s_*\sh{M})_l) \xrightarrow{\cong} \Ho^j(Y_{l-\Delta},\sh{M}_{l-\Delta})
\end{align*}
induced by $\gamma^{l+\Delta}_{l-\Delta}$ and denoted by the same letter. 
We note that the composition 
\begin{align*}
\gamma^{l+\Delta}_{l-\Delta}(t^l_{l-\Delta})^{\sharp} (s^{l+\Delta}_l)^{\sharp}:
\Ho^j(Y_{l+\Delta},\sh{M}_{l+\Delta}) \rightarrow \Ho^j(Y_{l-\Delta},\sh{M}_{l-\Delta})
\end{align*}
 is a transition map for the inverse limit $\Ho^j(X,\sh{M})$ (see
 \autoref{ssec:cohomologygroups}).
It follows that we have an isomorphism of vector spaces
\begin{align}\label{eq:isomofvspacesbetweenMands*M}
\Ho^j(X,\sh{M}) \xrightarrow{\cong} \Ho^j(X,s_*\sh{M})
\end{align}
induced by the maps $(s^{l+\Delta}_l)^{\sharp}$, which is
functorial in $\sh{M}$. The inverse isomorphism is induced by the
maps $\gamma^{l+\Delta}_{l-\Delta}(t^l_{l-\Delta})^{\sharp}$. The global sections of 
the morphism of sheaves \eqref{eq:defofepsilon} (for $k^{\prime}=l+m+\Delta$,
$k=l-\Delta$, $l^{\prime}=l+m$) are a map
\begin{align*} 
\epsilon: \Ho^0(Y_{l+m},\sh{D}_{l+m \rightarrow l})
\rightarrow \Ho^0(Y_{l+m+\Delta},\sh{D}_{l+m+\Delta \rightarrow l-\Delta})\;. 
\end{align*}
It satisfies the identity 
\begin{align}\label{eq:identityinvolvingepsilon}
\widehat{\gamma}^{l+m+\Delta}_{l-\Delta}(v \otimes \epsilon(p))=\gamma^{l+\Delta}_{l-\Delta}(t^l_{l-\Delta})^{\sharp}\; \widehat{\delta}^{l+m}_l
((s^{l+m+\Delta}_{l+m})^{\sharp}(v)\otimes p)
\end{align} for $v \in \Ho^j(Y_{l+m+\Delta},\sh{M}_{l+m+\Delta})$
and $p \in \Ho^0(Y_{l+m},\sh{D}_{l+m \rightarrow l})$. Indeed, this identity is implied
by the identity of isomorphisms of sheaves $\gamma^{l+m+\Delta}_{l-\Delta} = \gamma^{l+\Delta}_{l-\Delta}
t^l_{l-\Delta *}(\delta^{l+m}_l)$, which itself is a direct consequence of \eqref{eq:isomorphismsdeltakl} and \eqref{eq:gammaformtransitivesystem}. 

\subsection{$\mathfrak{g}$-action on $\Ho^j(X,s_*\sh{M})$}
The following theorem relates the $\mathfrak{g}$-action on $\Ho^j(X,s_*\sh{M})$ 
to the functor $(\cdot)^s$ of Definition \autoref{Def:twistbywidetildesi}. 

\begin{Thm}\label{Thm:Hojofs_*M}
Let $\sh{M} \in \Hol(\lambda)$. 
\eqref{eq:isomofvspacesbetweenMands*M} induces an isomorphism $\Ho^j(X,s_*\sh{M}) \cong \Ho^j(X,\sh{M})^s$ of $\mathfrak{g}$-modules. This isomorphism is functorial in $\sh{M}$. 
\end{Thm}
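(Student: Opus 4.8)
The plan is to trace through the explicit description of the $\mathfrak{g}$-action on cohomology given in Section \autoref{ssec:cohomologygroups} and to compare it with the action on $s_*\sh{M}$ via the isomorphism \eqref{eq:isomofvspacesbetweenMands*M}. Recall that for $v \in \mathfrak{g}$ the action on $\Ho^j(X,\sh{M})$ is induced, for suitable $m$, by the section $\partial^{l+m}_l(v) \in \Ho^0(Y_{l+m}, F_1\sh{D}_{l+m \to l})$ of \cite{KT95}[Lemma 2.3.1], which acts via $\widehat{\gamma}^{l+m}_l(v \otimes \partial^{l+m}_l(v))$ (suppressing the lower-order term of the filtration, or rather keeping it — it only lives in degree zero). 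On the other side, the action of $v$ on $\Ho^j(X,s_*\sh{M})$ is induced by $\partial^{l+m}_l(v)$ applied to $(s_*\sh{M})_l$, i.e.\ by $\widehat{\delta}^{l+m}_l((s^{l+m+\Delta}_{l+m})^{\sharp}(\cdot) \otimes \partial^{l+m}_l(v))$ after transporting along $(s^{l+\Delta}_l)^{\sharp}$. The whole point of having set up the monomorphism $\epsilon$ and proved the identity \eqref{eq:identityinvolvingepsilon} is precisely to compute this: \eqref{eq:identityinvolvingepsilon} says that the operator on $\Ho^j(X,s_*\sh{M})$ corresponding to the section $\partial^{l+m}_l(v)$ agrees, under \eqref{eq:isomofvspacesbetweenMands*M}, with the operator on $\Ho^j(X,\sh{M})$ corresponding to the section $\epsilon(\partial^{l+m}_l(v))$.

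So the key step is to identify $\epsilon(\partial^{l+m}_l(v))$ as a vector-field section: I claim $\epsilon(\partial^{l+m+\Delta}_{l+m \to l}$-type section of $v$) $= \partial^{\bullet}_{\bullet}(s^{-1}(v))$, i.e.\ $\epsilon$ intertwines the section attached to $v$ with the section attached to $\widetilde{s_i}(v) = s(v)$ (or $s^{-1}(v)$, depending on the variance — one must be careful which direction of the automorphism enters; since $\epsilon$ is built from $1_{s^{k'}_{l'}}$ and $1_{t^l_k}$ with $t = s^{-1}$, and $\partial$ is constructed from the action of $\mathfrak{g}$ on $\mathcal{O}_X(-\lambda)$ by vector fields, conjugating a vector field by the automorphism $s$ of $X$ produces $s(v)$ acting on the pulled-back bundle). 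Concretely, $\partial^{l+m}_l(v)$ is characterized by how it differentiates sections of $\sh{O}_l$ relative to $\sh{O}_{l+m}$, and pre/post-composing with the bundle isomorphisms $1_s$ of Remark \autoref{Rem:pullbackofOlambdabys} (which implement $\varphi \mapsto \varphi(s\cdot)$) converts the derivation along $v$ into the derivation along the push-forward vector field, which is exactly $s(v)$ by the very definition of the $\widetilde{s_i}$-action on $\mathfrak{g}$ as $e^{e_i}e^{-f_i}e^{e_i}$ acting as an automorphism of $X$ compatibly with its action on $\mathfrak{g}$. Combining this identification with \eqref{eq:identityinvolvingepsilon} gives: the action of $v$ on $\Ho^j(X,s_*\sh{M})$ equals, under \eqref{eq:isomofvspacesbetweenMands*M}, the action of $s(v)$ on $\Ho^j(X,\sh{M})$; that is precisely the statement that $\Ho^j(X,s_*\sh{M}) \cong \Ho^j(X,\sh{M})^s$ as $\mathfrak{g}$-modules, since by Definition \autoref{Def:twistbywidetildesi} the module $\Ho^j(X,\sh{M})^s$ is the one on which $v$ acts as $s(v)$ does on $\Ho^j(X,\sh{M})$.

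There are two bookkeeping points to discharge along the way. First, the index shifts: to define the $v$-action on $(s_*\sh{M})_l = s^{l+\Delta}_{l*}\sh{M}_{l+\Delta}$ one needs $m$ large enough that $[v, \mathfrak{n}^-_{l+m}] \subseteq \mathfrak{n}^-_l$, and then $\epsilon$ is applied with the index pattern $k' = l+m+\Delta$, $k = l-\Delta$, $l' = l+m$ indicated in the excerpt; one checks these inequalities ($k' \geq l' + \Delta$, $l' \geq l$, $l \geq k+\Delta$) are all satisfied and that the resulting transition maps $\gamma^{l+\Delta}_{l-\Delta}(t^l_{l-\Delta})^{\sharp}(s^{l+\Delta}_l)^{\sharp}$ are cofinal in the inverse system, so that the identification passes to the limit $\Ho^j(X,-)$. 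Second, one must check the $v$-action is well defined independently of the choice of $m$ and compatible with the $\mathfrak{b}^-$-action already present, but this is inherited from the corresponding facts for $\sh{M}$ via the functoriality of \eqref{eq:isomofvspacesbetweenMands*M} in $\sh{M}$, which was already recorded. Functoriality of the final isomorphism in $\sh{M}$ likewise follows from functoriality of \eqref{eq:isomofvspacesbetweenMands*M} together with naturality of $\epsilon$ and of the sections $\partial^{l+m}_l(v)$ in the $\sh{D}$-module argument. I expect the main obstacle to be the careful verification that $\epsilon$ really does send the section attached to $v$ to the section attached to $s(v)$ — this is essentially an unwinding of the definition of $\partial^{l+m}_l(v)$ from \cite{KT95}[Lemma 2.3.1] combined with the chain of bundle isomorphisms in Remark \autoref{Rem:pullbackofOlambdabys}, and getting the left/right $\mathfrak{b}^-$-sign conventions and the direction of the automorphism correct requires some care, but it is a finite local computation with no conceptual difficulty.
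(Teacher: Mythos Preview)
Your proposal is correct and follows essentially the same route as the paper: reduce to identifying $\epsilon$ applied to the section $\partial^{l+m}_l(v)$, then use \eqref{eq:identityinvolvingepsilon} to conclude. The paper carries out the ``finite local computation'' you anticipate by introducing an auxiliary map $^s\partial$ coming from the $^sP_j^-$-equivariant structure on $s^*\sh{O}_X(-\lambda)$, and then verifies two commutative diagrams that pin down the variance you left open, yielding the precise identity $\epsilon(^s\partial^{l+m}_l(v)) = \partial^{l+m+\Delta}_{l-\Delta}(sv)$.
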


\begin{proof}
Let $j \in I$. 
We recall that the $P_j^-$-equivariant structure of $\sh{O}_X(-\lambda)$ induces
a Lie algebra map $\partial: \mathfrak{p}_j^- \rightarrow \End_{\C} (\sh{O}_X(-\lambda)\vert Y)$. 
Similarly, the line bundle $s^*\sh{O}_X(-\lambda)$ has a $^s P_j^-$-equivariant structure
(the group scheme $^s P_j^-$ is defined as the $s$-conjugate of $P_j^-$ in $G$; it is a subscheme of $G$). We obtain a Lie algebra map 
\begin{align*}
^s \partial: ^s \mathfrak{p}_j \rightarrow \End_{\C}(s^*\sh{O}_X(-\lambda)\vert Y)=\End_{\C}(s^{-1}\sh{O}_X(-\lambda)\vert Y)
\end{align*}
such that the diagram
\begin{align*}
\xymatrix{\mathfrak{p}_j^- \ar[r]^(0.3){\partial} & \End_{\C}(\sh{O}_X(-\lambda)\vert Y)\\
^s \mathfrak{p}_j^- \ar[u]^s_{\cong} \ar[r]^(0.3){^s\partial} & \End_{\C}(s^{-1}\sh{O}_X(-\lambda)\vert Y) \ar[u]^{\Phi}_{\cong}}
\end{align*}
commutes, where $s: ^s \mathfrak{p}_j^-
\xrightarrow{\cong} \mathfrak{p}_j^-$ is the restriction of the automorphism $s$ of
$\mathfrak{g}$ and $\Phi(p)=1_s p (1_s)^{-1}$ ($1_s$ was introduced in
Remark \autoref{Rem:pullbackofOlambdabys}). This diagram extends to a commutative
diagram
\begin{align}\label{eq:partialands-1partial}
\xymatrix{\mathfrak{g} \ar[r]^(0.3){\partial} & \End_{\C}(\sh{O}_X(-\lambda)\vert Y)\\
\mathfrak{g} \ar[u]^s_{\cong} \ar[r]^(0.3){^s\partial} & \End_{\C}(s^{-1}\sh{O}_X(-\lambda)\vert Y) \ar[u]^{\Phi}_{\cong}}\;. 
\end{align}
We note that $\Phi^{-1}=s^{-1}$, where $s^{-1}(p)$ is understood as a morphism of sheaves
for $p: \sh{O}_X(-\lambda)\vert Y \rightarrow \sh{O}_X(-\lambda)\vert Y$, holds on the image
of $\partial$. 
On the other hand, we have 
by definition \eqref{eq:defofepsilon} of $\epsilon$ a commutative diagram
\begin{align}\label{eq:mapsbetweenOandmapsbetweens-1O}
\xymatrix@C=1.7cm{p_{l-\Delta}^{-1}\sh{O}_{l-\Delta} \ar[drrr]^{1_{p_{l-\Delta}}} \ar[dr]_{s_l^{-1}(1_{t^l_{l-\Delta}})}  \ar[ddd]^{p_{l+m+\Delta}^{-1}(\epsilon(p))} \\
& s_l^{-1}\sh{O}_l \ar[r]_{s^{-1}(1_{p_l})} \ar[d]^{p_{l+m+\Delta}^{-1}(p)} & s^{-1}\sh{O}_X(-\lambda)\vert Y
\ar[r]_{1_s} \ar[r] & \sh{O}_X(-\lambda)\vert Y \\
& s_{l+m}^{-1}\sh{O}_{l+m} \ar[r]^{s^{-1}(1_{p_{l+m}})}
 \ar[dl]_{p_{l+m+\Delta}^{-1}(1_{s^{l+m+\Delta}_{l+m}})} & s^{-1}\sh{O}_X(-\lambda)\vert Y
\ar[r]^{1_s} \ar[r] & \sh{O}_X(-\lambda)\vert Y \\
p^{-1}_{l+m+\Delta}\sh{O}_{l+m+\Delta} \ar[urrr]_{1_{p_{l+m+\Delta}}}}
\end{align}
for $p \in (s^{l+m+\Delta}_{l+m})^{-1}\sh{D}_{l+m \rightarrow l}$. Here, the
maps $1_{p_k}: p_k^{-1}\sh{O}_k \hookrightarrow \sh{O}_X(-\lambda)\vert Y$
are the monomorphisms corresponding to the first part of \eqref{eq:pullback}.\par
Let $v \in \mathfrak{g}$ and $m \geq 0$ be such that $[v,\mathfrak{n}^-_{l+m}] 
\subseteq \mathfrak{n}^-_l$ holds for all $l$. Then Lemma \autoref{Lemma:actionofsonnminusl} implies
$[sv,\mathfrak{n}^-_{l+m+\Delta}] \subseteq \mathfrak{n}^-_{l-\Delta}$. 
We recall from \eqref{eq:partiallplusml} that there
is a hence a unique element 
\begin{align*}
\partial^{l+m+\Delta}_{l-\Delta}(sv) \in \Ho^0(Y_{l+m+\Delta},F_1 \sh{D}_{l+m+\Delta \rightarrow l-\Delta})
\end{align*} such that $1_{p_{l+m+\Delta}} p_{l+m+\Delta}^{-1}(\partial^{l+m+\Delta}_{l-\Delta}(sv))=\partial(sv) 1_{p_{l-\Delta}}$ holds. 
We have the similar maps
\begin{align*}
^s \partial^{l+m}_l(v) \in \Ho^0(Y_{l+m+\Delta},(s^{l+m+\Delta}_{l+m})^{-1}F_1 \sh{D}_{l+m \rightarrow l})=\Ho^0(Y_{l+m},F_1\sh{D}_{l+m \rightarrow l})
\end{align*}
for $^s \partial$ instead of $\partial$. The identity 
\begin{align}\label{eq:relationbetweenepsilonands}
\epsilon(^s\partial^{l+m}_l(v)) = \partial^{l+m+\Delta}_{l-\Delta}(sv)
\end{align}
follows from \eqref{eq:partialands-1partial} and \eqref{eq:mapsbetweenOandmapsbetweens-1O} by 
uniqueness of $\partial^{l+m+\Delta}_{l-\Delta}(sv)$. Now \eqref{eq:identityinvolvingepsilon} 
and \eqref{eq:relationbetweenepsilonands} imply the statement of the theorem by definition of the $\mathfrak{g}$-action on $\Ho^j(X,\sh{M})$ and $\Ho^j(X,s_*\sh{M})$. 
\end{proof}

\section{Categories $\Hol_0^{\RVerma}(\lambda)$ and $\Hol_0^{\RVerma \Z}(\lambda)$}
\label{ssec:categoryHol0R(lambda)}

In this section we introduce the full subcategories $\Hol_0^{\RVerma}(\lambda)$
and $\Hol_0^{\RVerma \Z}(\lambda)$ of $\Hol(\lambda)$. They contain the category $\Hol_0(\lambda)$ of \cite{KT95}[p. 41]. 
We fix $i \in I$ in the whole section, e.g. in Definition \autoref{Def:Hol0R(lambda)} below.
Let $w \in W$ be such that $s_i w <w$. Associated to the embedding
$i_{w,l}$ \eqref{eq:iwl} we have 
a canonical morphism of functors $i_{w,l!} \rightarrow i_{w,l*}$
and in particular morphisms $\sh{R}_{!w}(\lambda,\mu)_l \rightarrow
\sh{R}_{*w}(\lambda,\mu)_l$. 
We define $\sh{R}_{!*w}(\lambda,\mu)_l \subseteq \sh{R}_{*w}(\lambda,\mu)_l$
as the image of this morphism.  
We have $\supp \sh{R}_{!*w}(\lambda,\mu)_l = \overline{X_w}$. 
As $i_{w,l}$ is an affine embedding of an irreducible subvariety and $\Omega^{(\mu)}_{\Gm}\boxtimes \Omega_{^{s_i}U_{s_iw}}$
is a simple $\sh{O}$-coherent $\sh{D}$-module it follows that $\sh{R}_{!*w}(\lambda,\mu)_l$ is the unique simple submodule of $\sh{R}_{*w}(\lambda,\mu)_l$
and simple quotient of $\sh{R}_{!w}(\lambda,\mu)_l$ by \cite{Ber}[p. 29]. 
Then $(\sh{R}_{!*w}(\lambda,\mu)_l)_{l \geq l_0}$ defines an object $\sh{R}_{!*w}(\lambda,\mu)
\in \Hol(\lambda)$. 
We have $\Dual \sh{R}_{!*w}(\lambda,\mu) \cong \sh{R}_{!*w}(\lambda,-\mu)$
as a direct consequence of \cite{Eic15}[Remark 2.3].  

\begin{Not}
When $\mu \in \Z$ we abbreviate the isomorphism class
$\sh{R}_{!*w}(\lambda)=\sh{R}_{!*w}(\lambda,\mu)$. 
\end{Not}

\begin{Rem}\label{Rem:LwlambdaequalsR!*wlambda}
By definition \cite{KT95}[p. 41] $\sh{L}_w(\lambda)$ is the unique simple submodule of $\sh{B}_w(\lambda)$. By \eqref{eq:seswithBwlambdaandRwlambda} below $\sh{B}_w(\lambda)$ is a submodule of $\sh{R}_{*w}(\lambda)$ and hence by uniqueness $\sh{L}_w(\lambda)=\sh{R}_{!*w}(\lambda)$ for $w$ such that $s_i w < w$. In the same way we find $\widetilde{s_i}_*\sh{L}_w(\lambda)=\sh{R}_{!*w}(\lambda)$
for $s_iw < w$. 
\end{Rem}

\begin{Def}\label{Def:Hol0R(lambda)}
We define $\Hol_0^{\RVerma}(\lambda)$ as the full
subcategory of $\Hol(\lambda)$ whose objects have a composition series
with simple quotients of the form $\sh{R}_{!*w}(\lambda,\mu)$ for some $\mu \in \C$
for $w$ such that $s_i w < w$ or to $\sh{L}_w(\lambda)$ 
or $\widetilde{s_i}_*\sh{L}_w(\lambda)$ for $s_i w > w$. We similarly define
the category $\Hol_0^{\RVerma \Z}(\lambda)$ requiring $\mu \in \Z$. 
\end{Def}

By Remark \autoref{Rem:LwlambdaequalsR!*wlambda} $\Hol_0^{\RVerma \Z}(\lambda)$ contains
$\Hol_0(\lambda)$. Analogous to the notion of an admissible closed subset of $X$ of
\cite{KT95}[p. 34] we have

\begin{Def}
We call a finite union of $s_i \overline{X_w}$ and
$\overline{X_w}$, $w \in W$, relaxed admissible closed subset of $X$. 
\end{Def}
We consider the collection of relaxed admissible closed subsets as a poset with respect to inclusion.
The collection of admissible closed subsets of $X$ is a cofinal subposet of it. 

\subsection{Properties of $\Hol_0^{\RVerma}(\lambda)$
and $\Hol_0^{\RVerma \Z}(\lambda)$}

\begin{Lemma}\label{Lemma:propertiesofHol0R(lambda)} \noindent
\begin{enumerate}
\item $\Dual$ preserves $\Hol_0^{\RVerma (\Z)}(\lambda)$. 
\item $\Hol_0^{\RVerma (\Z)}(\lambda)$ is a Serre subcategory of $\Hol(\lambda)$. 
\item The support of any object of $\Hol_0^{\RVerma}(\lambda)$ is a relaxed admissible
closed subset. 
\item Let $\sh{M} \in
\Hol_0^{\RVerma}(\lambda)$.  
Let $\sh{S}$ be a simple quotient of a composition series of $\sh{M}$. 
If $\sh{S}\cong\sh{L}_w(\lambda)$ or $\cong \sh{R}_{!*w}(\lambda,\mu)$, 
then we have $\overline{X_w} \subseteq \supp \sh{M}$. If $\sh{S} \cong \widetilde{s_i}_*\sh{L}_w(\lambda)$, 
then we have $s_i \overline{X_w} \subseteq \supp\sh{M}$. 
\end{enumerate}
\end{Lemma}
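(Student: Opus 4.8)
## Proof Proposal

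The plan is to verify all four statements more or less in parallel, using that each is essentially formal once one knows the behavior of the relevant functors ($\Dual$, $\widetilde{s_i}_*$, $\supp(\cdot)$) on the generating simple objects $\sh{L}_w(\lambda)$, $\sh{R}_{!*w}(\lambda,\alpha)$, $\widetilde{s_i}_*\sh{L}_w(\lambda)$. For (1): $\Dual$ is an exact auto-equivalence of $\Hol(\lambda)$ carrying simple objects to simple objects and reversing a composition series to another composition series, so it suffices to check that $\Dual$ maps the set of allowed simple quotients into itself. This is immediate: $\Dual \sh{L}_w(\lambda) \cong \sh{L}_w(\lambda)$ (as $\sh{L}_w(\lambda)$ is the $!*$-extension, which is self-dual), $\Dual \sh{R}_{!*w}(\lambda,\alpha) \cong \sh{R}_{!*w}(\lambda,-\alpha)$ (stated in the excerpt right after the definition of $\sh{R}_{!*w}$, noting $-\alpha \in \Z$ when $\alpha \in \Z$), and $\Dual \widetilde{s_i}_*\sh{L}_w(\lambda) \cong \widetilde{s_i}_*\Dual\sh{L}_w(\lambda) \cong \widetilde{s_i}_*\sh{L}_w(\lambda)$ by Lemma \autoref{Lemma:Holonomicdualitycommuteswithwidetildesi*}. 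Hence $\Dual$ preserves $\Hol_0^{\RVerma(\Z)}(\lambda)$.

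For (2): being a Serre subcategory means closure under subobjects, quotients, and extensions. Closure under extensions is clear from the definition, since concatenating composition series of a sub and a quotient gives a composition series of the extension with the same multiset of simple subquotients. For subobjects and quotients one uses the Jordan--Hölder property of $\Hol(\lambda)$ (each object has finite length and a well-defined multiset of simple subquotients): if $\sh{N} \subseteq \sh{M}$ with $\sh{M} \in \Hol_0^{\RVerma(\Z)}(\lambda)$, then the simple subquotients of $\sh{N}$ and of $\sh{M}/\sh{N}$ are among those of $\sh{M}$, hence of the allowed form; thus both lie in $\Hol_0^{\RVerma(\Z)}(\lambda)$. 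The main subtlety here is purely bookkeeping: the list of allowed simples is closed under the constraint ``$s_iw<w$ versus $s_iw>w$'' tied to which symbol ($\sh{L}$, $\sh{R}_{!*}$, $\widetilde{s_i}_*\sh{L}$) is used, so one should check these three families are pairwise disjoint (distinguish them by their support: $\overline{X_w}$ versus $s_i\overline{X_w}$, together with the monodromy parameter $\alpha$ distinguishing $\sh{R}_{!*w}$ from $\sh{L}_w = \sh{R}_{!*w}(\lambda)$ at $\alpha \in \Z$ — cf. Remark \autoref{Rem:LwlambdaequalsR!*wlambda}).

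For (3) and (4): the support of an object equals the union of the supports of its simple subquotients, because support is a closed condition and behaves additively on short exact sequences (if $0 \to \sh{M}' \to \sh{M} \to \sh{M}'' \to 0$ then $\supp\sh{M} = \supp\sh{M}' \cup \supp\sh{M}''$). So for (3) it suffices to observe $\supp\sh{L}_w(\lambda) = \overline{X_w}$, $\supp\sh{R}_{!*w}(\lambda,\alpha) = \overline{X_w}$ (stated just above Remark \autoref{Rem:LwlambdaequalsR!*wlambda} for $\sh{R}_{!*w}(\lambda,\alpha)_l$, hence for the limit object), and $\supp\widetilde{s_i}_*\sh{L}_w(\lambda) = s_i\overline{X_w}$ (from Theorem \autoref{Thm:Hojofs_*M}'s support remark in the introduction, or directly: $\widetilde{s_i}_*$ translates $\supp$ by $s_i$); a finite union of such sets is by definition relaxed admissible. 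For (4): by the displayed additivity, if $\sh{S}$ is one of the simple subquotients of $\sh{M}$ then $\supp\sh{S} \subseteq \supp\sh{M}$; plugging in $\supp\sh{S} = \overline{X_w}$ (for $\sh{S} \cong \sh{L}_w(\lambda)$ or $\sh{R}_{!*w}(\lambda,\alpha)$) or $\supp\sh{S} = s_i\overline{X_w}$ (for $\sh{S} \cong \widetilde{s_i}_*\sh{L}_w(\lambda)$) gives exactly the claimed inclusions. I expect no serious obstacle: the only place requiring genuine care is establishing that support is exhausted by the supports of the simple subquotients, i.e. that no simple subquotient has support strictly larger than that of the ambient module (which follows from the short exact sequence formula applied inductively along a composition series) and that conversely every simple subquotient's support is contained in $\supp\sh{M}$.
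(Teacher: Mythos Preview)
Your proposal is correct and follows essentially the same approach as the paper: reduce (1) to the effect of $\Dual$ on the generating simples via $\Dual\sh{L}_w(\lambda)\cong\sh{L}_w(\lambda)$, $\Dual\sh{R}_{!*w}(\lambda,\alpha)\cong\sh{R}_{!*w}(\lambda,-\alpha)$ and Lemma \ref{Lemma:Holonomicdualitycommuteswithwidetildesi*}; deduce (2) from the induced composition series of sub- and quotient objects; and derive (3) and (4) from the additivity $\supp\sh{M}_2=\supp\sh{M}_1\cup\supp\sh{M}_3$ on short exact sequences together with the known supports of the generating simples. The extra remark in your (2) about pairwise disjointness of the three families of simples is not needed for the Serre subcategory claim, but it is harmless.
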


\begin{proof}
(1) This follows from the fact that $\Dual \sh{L}_w(\lambda) = \sh{L}_w(\lambda)$ and 
$\Dual \sh{R}_{!*w}(\lambda,\mu) = \sh{R}_{!*w}(\lambda,-\mu)$ holds and
from Lemma \autoref{Lemma:Holonomicdualitycommuteswithwidetildesi*}.\\
(2) This is shown by looking at the induced composition series of a subobject and a quotient object
of an object in $\Hol_0^{\RVerma (\Z)}(\lambda)$.\newline
(3) For a short exact sequence $0 \rightarrow \sh{M}_1 
\rightarrow \sh{M}_2 \rightarrow \sh{M}_3 \rightarrow 0$ in 
$\Hol_0^{\RVerma}(\lambda)$ we have $\supp \sh{M}_2 = \supp \sh{M}_1 \cup \supp\sh{M}_3$.
This implies the statement.\newline
(4) Assume there is a simple quotient $\sh{S} \cong \sh{L}_w(\lambda)$ in a composition
series of $\sh{M}$. Then $\supp \sh{M} \supseteq \supp \sh{L}_w(\lambda)
=\overline{X_w}$ holds. The other cases are completely similar. 
\end{proof}

\begin{Lemma}\noindent
\begin{enumerate}
\item $\widetilde{s_i}_* \sh{R}_{*w}(\lambda,\mu) \cong \sh{R}_{*w}(\lambda,-\mu)$
\item $\widetilde{s_i}_*$ preserves $\Hol_0^{\RVerma(\Z)}(\lambda)$. 
\end{enumerate}

\end{Lemma}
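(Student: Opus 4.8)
The plan is to deduce both statements from the description of $\widetilde{s_i}_*$ as the $\sh{D}$-module direct image along the automorphism $s := \widetilde{s_i}$ of $X$ (Definition \autoref{Def:s*Ml}), combined with the explicit geometry of $X_w \cap s_i X_w$ from \autoref{Lemma:intersectionofXwandXsiW}. For (1): since $X_w \cap s_i X_w = X_w \cap s X_w$ is $T$-invariant and $s(X_w \cap s_i X_w) = s_i(X_w \cap s_i X_w) = X_w \cap s_i X_w$, the automorphism $s$ restricts to an automorphism $\sigma$ of $X_w \cap s_i X_w$. Unwinding Definitions \autoref{Def:sllprime} and \autoref{Def:s*Ml} one checks that $s^{l+\Delta}_l \circ i_{w,l+\Delta} = i_{w,l} \circ \sigma$ (both sides take a point of $X_w \cap s_i X_w \subseteq X^{\leq w}$ to its $s$-image, then project to $X^{\leq w}_l$). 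As $i_{w,l}$ is an affine locally closed embedding, $\sigma$ an isomorphism, and $s^{l+\Delta}_l$ restricts to an isomorphism on $\overline{X_w}$ (cf. \autoref{Prop:equivalencebetweenholonomiccats}(1) and \autoref{Lemma:s*exactautoequivalenceofHol(lambda)}), the corresponding $\sh{D}$-module direct images are exact and compose, so
\begin{align*}
\left(\widetilde{s_i}_*\sh{R}_{*w}(\lambda,\alpha)\right)_l = s^{l+\Delta}_{l*}\,\sh{R}_{*w}(\lambda,\alpha)_{l+\Delta} \cong i_{w,l*}\!\left(\sigma_*\!\left(\left(\Omega^{(\alpha)}_{\C^{\times}}\boxtimes\Omega_{{}^{s_i}U_{s_iw}}\right)\otimes i_{w,l}^*\sh{O}_{X^{\leq w}_l}(\lambda)\right)\right),
\end{align*}
where, to match the twists on the two sides, one uses the line bundle isomorphism $(s^{l+\Delta}_l)^*\sh{O}_{X^{\leq w}_l}(\lambda)\xrightarrow{\cong}\sh{O}_{X^{\leq w}_{l+\Delta}}(\lambda)$ of \autoref{Rem:pullbackofOlambdabys} (this identification being already built into the construction of $\widetilde{s_i}_*$). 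So (1) reduces to computing $\sigma_*$ of the local system up to twist.

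Next I would pin down $\sigma$ on the factor $\C^{\times}\cong U(\alpha_i)\setminus\{1\}$, with coordinate $x$. Since $s\,e^{xe_i}\,s^{-1} = e^{x\,s(e_i)} = e^{-xf_i}$, the point $e^{xe_i}v_2 wB$ of $X_w\cap s_iX_w\subseteq X_w$ is sent by $\sigma$ to the point of $s_iX_w$ with $U^-(-\alpha_i)$-coordinate $-x$; re-expressing this point in the $X_w$-parametrization by the transition map \eqref{eq:transfunction} of \autoref{Lemma:intersectionofXwandXsiW}, which carries the $U^-(-\alpha_i)$-coordinate $z$ to the $U(\alpha_i)$-coordinate $z^{-1}$, shows that $\sigma$ acts on the $\C^{\times}$-factor by $x\mapsto -x^{-1}$. (Equivalently: $s$ preserves $\overline{X_w}$ for $s_iw<w$ and exchanges the boundary divisors $X_{s_iw}$ at $x=\infty$ and $s_iX_{s_iw}$ at $x=0$, so $\sigma$ must be an inversion on the $\C^{\times}$-factor.) Now $X_w\cap s_iX_w\cong\C^{\times}\times\Aff^{\length(w)-1}$ has $\pi_1\cong\Z$ generated by the loop in $\C^{\times}$, on which an inversion acts by $-1$; hence $\sigma_*$ sends the rank-one local system with monodromy $e^{2\pi i\alpha}$ to the one with monodromy $e^{-2\pi i\alpha}$, i.e. $\sigma_*(\Omega^{(\alpha)}_{\C^{\times}}\boxtimes\Omega_{{}^{s_i}U_{s_iw}})\cong\Omega^{(-\alpha)}_{\C^{\times}}\boxtimes\Omega_{{}^{s_i}U_{s_iw}}$ (the second factor being simply connected, and only the parameter modulo $\Z$ mattering). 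Combined with $s^*\sh{O}_X(\lambda)\cong\sh{O}_X(\lambda)$, this gives $\left(\widetilde{s_i}_*\sh{R}_{*w}(\lambda,\alpha)\right)_l\cong\sh{R}_{*w}(\lambda,-\alpha)_l$ compatibly in $l$, proving (1).

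For (2): $\widetilde{s_i}_*$ is an exact auto-equivalence of $\Hol(\lambda)$ and $\Hol_0^{\RVerma (\Z)}(\lambda)$ is a Serre subcategory of $\Hol(\lambda)$ (\autoref{Lemma:propertiesofHol0R(lambda)}(2)), so it suffices to show that $\widetilde{s_i}_*$ carries each of the three types of generating simple object back into $\Hol_0^{\RVerma (\Z)}(\lambda)$. (i) For $\sh{R}_{!*w}(\lambda,\alpha)$ with $s_iw<w$: it is the simple socle of $\sh{R}_{*w}(\lambda,\alpha)$, so the equivalence $\widetilde{s_i}_*$ carries it to the simple socle of $\widetilde{s_i}_*\sh{R}_{*w}(\lambda,\alpha)\cong\sh{R}_{*w}(\lambda,-\alpha)$ (by (1)), namely $\sh{R}_{!*w}(\lambda,-\alpha)$, which is of the required form as $-\alpha\in\Z\iff\alpha\in\Z$. (ii) For $\sh{L}_w(\lambda)$ with $s_iw>w$: $\widetilde{s_i}_*\sh{L}_w(\lambda)$ is by definition one of the generating simples. (iii) For $\widetilde{s_i}_*\sh{L}_w(\lambda)$ with $s_iw>w$: applying $\widetilde{s_i}_*$ yields $(\widetilde{s_i}_*)^2\sh{L}_w(\lambda)$, and composing the construction of Definition \autoref{Def:s*Ml} with itself (cf. the remark following \autoref{Lemma:s*exactautoequivalenceofHol(lambda)}) identifies $(\widetilde{s_i}_*)^2$ with the direct image functor attached to the automorphism $\widetilde{s_i}^2$ of $X$, which is left translation by the element $t_0=\widetilde{s_i}^2\in T$; since $X_w$ is $T$-invariant and simply connected and $\sh{O}_X(\lambda)$ is $T$-equivariant, $(t_0)_*\sh{L}_w(\lambda)\cong\sh{L}_w(\lambda)$, again a generating simple. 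Hence $\widetilde{s_i}_*$ preserves $\Hol_0^{\RVerma (\Z)}(\lambda)$.

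The main obstacle will be the geometric input of (1): identifying $\sigma$ on the $\C^{\times}$-factor precisely enough to see the sign change $\alpha\mapsto-\alpha$ in the monodromy. This amounts to combining the action of $s$ on the Schubert cell $X_w$ (the conjugation $s\,U(\alpha_i)\,s^{-1}=U^-(-\alpha_i)$, $e^{xe_i}\mapsto e^{-xf_i}$) with the explicit transition function of \autoref{Lemma:intersectionofXwandXsiW}, and verifying that the twist by $\sh{O}_X(\lambda)$ transforms correctly under $\sigma_*$; the remaining steps are formal consequences of the exactness and compatibility properties of $i_{w,l*}$, $\widetilde{s_i}_*$ and $\Dual$ already in place.
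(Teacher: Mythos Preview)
Your proposal is correct and follows essentially the same route as the paper. For (1) the paper also reduces to the induced automorphism of $X_w\cap s_iX_w$, asserts (as ``a direct computation'') the identity $\pi\widetilde{s_i}=J\pi$ with $J(x)=-x^{-1}$ on the $\C^{\times}$-factor, and concludes via $J^*\Omega^{(\alpha)}_{\C^{\times}}\cong\Omega^{(-\alpha)}_{\C^{\times}}$; your derivation of $x\mapsto -x^{-1}$ through the transition map of Lemma~\autoref{Lemma:intersectionofXwandXsiW} is exactly that computation spelled out. For (2) the paper likewise checks the simple generators; the only cosmetic difference is that for $(\widetilde{s_i}_*)^2\sh{L}_w(\lambda)\cong\sh{L}_w(\lambda)$ it invokes Proposition~\autoref{Prop:equivalencebetweenholonomiccats}(2) directly from $\widetilde{s_i}^2X_w=s_i^2X_w=X_w$, which is a shade quicker than your argument via $\widetilde{s_i}^2\in T$ and $T$-equivariance, but the content is the same.
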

\begin{proof}
(1). It suffices to observe the isomorphism of $\sh{D}$-modules on $\Gm\times ^{s_i}U_{s_iw}$
\begin{align*}
\widetilde{s_i}^*\left(\Omega^{(\mu)}_{\Gm}\boxtimes \Omega_{^{s_i}U_{s_iw}}\right)
= \widetilde{s_i}^*\pi^* \Omega^{(\mu)}_{\Gm} = \pi^* J^* \Omega^{(\mu)}_{\Gm}
= \pi^* \Omega^{(-\mu)}_{\Gm} = \Omega^{(-\mu)}_{\Gm}\boxtimes \Omega_{^{s_i}U_{s_iw}}\;. 
\end{align*}
Here $\pi: \Gm \times ^{s_i}U_{s_iw} \rightarrow \Gm$ is the projection, 
$\widetilde{s_i}$ denotes the automorphism of $\Gm \times ^{s_i}U_{s_iw}$
induced by the same-named automorphism of $X$ and
$J$ is the automorphism of $\Gm$ given by $x \mapsto -x^{-1}$. The identity $\pi \widetilde{s_i} = J \pi$ is a direct computation. Furthermore $(\cdot)^*$ denotes the
inverse image of right $\sh{D}$-modules.\\
(2). (1) implies $\widetilde{s_i}_* \sh{R}_{!*w}(\lambda,\mu) \cong \sh{R}_{!*w}(\lambda,-\mu)$. 
Proposition \autoref{Prop:equivalencebetweenholonomiccats}(2) implies $(\widetilde{s_i}_*)^2\sh{B}_w(\lambda)
\cong \sh{B}_w(\lambda)$ as $\widetilde{s_i}^2X_w = s_i^2 X_w = X_w$, which in turn implies
$(\widetilde{s_i}_*)^2\sh{L}_w(\lambda) \cong \sh{L}_w(\lambda)$ for all $w \in W$. Lemma \autoref{Lemma:s*exactautoequivalenceofHol(lambda)}
and Definition \autoref{Def:Hol0R(lambda)} now imply the statement. 
\end{proof}

\subsection{Short exact sequences involving $\sh{R}_{*w}(\lambda)$}
Let $w \in W$ be such that $s_i w < w$. According to
\cite{Kas00}[Theorem 3.29 (2)] we have a distinguished triangle 
\begin{align}\label{eq:RGammaZdistinguishedtriangle}
\Rderived\Gamma_Z\sh{F} \rightarrow \sh{F} \rightarrow \Rderived\Gamma_{X^{\leq w}_l \setminus Z}\sh{F} \rightarrow
\end{align}
for any $\sh{F}$ in the bounded derived category of right $\sh{D}_{X^{\leq w}_l}$-modules, where we defined the closed subvariety $Z = \overline{X_{s_iw}}$
of $X^{\leq w}_l$. We set 
\begin{align}\label{eq:RGammaD}
\sh{F} = \Rderived \Gamma_{s_i X_w} \Omega_{X^{\leq w}_l}
\end{align}
and by \cite{Kas00}[Theorem 3.29 (1)] find the distinguished triangle
\begin{align*}
\Rderived\Gamma_{X_{s_i w}}\Omega_{X^{\leq w}_l} \rightarrow \Rderived\Gamma_{s_i X_w} \Omega_{X^{\leq w}_l} \rightarrow \Rderived\Gamma_{s_i X_w \cap X_w}\Omega_{X^{\leq w}_l} \rightarrow
\end{align*}
as $s_iX_w \cap Z = X_{s_iw}$ and $s_i X_w\cap (X^{\leq w}_l\setminus Z) = s_iX_w \cap X_w$. 
The corresponding long exact sequence of cohomology reduces to
\begin{align*}
0 & \rightarrow \Ho^{\codim s_i X_w} \Rderived \Gamma_{s_i X_w}(\Omega_{X^{\leq w}_l})
\rightarrow \Ho^{\codim s_i X_w \cap X_w} \Rderived \Gamma_{X_w \cap s_i X_w}(\Omega_{X^{\leq w}_l})\\
& \rightarrow \Ho^{\codim X_{s_i w}} \Rderived \Gamma_{X_{s_iw}}(\Omega_{X^{\leq w}_l}) \rightarrow 0
\end{align*}
because the cohomology vanishes in all other degrees \cite{KT95}[p. 32]
as a consequence of the fact that the subvarieties $s_iX_w$, $s_iX_w \cap X_w$ and $X_{s_iw}$ are smooth, locally closed and affinely embedded in $X^{\leq w}_l$.
Thus we arrive at the exact sequence
\begin{align}\label{eq:seswithBwlambda-landRwlambdal}
0 \rightarrow \iota_{s_i X_w*}(\Omega_{s_i X_w})\otimes {\sh{O}_{X^{\leq w}_l}(\lambda)} \rightarrow \sh{R}_{*w}(\lambda)_l \rightarrow \sh{B}_{s_iw}(\lambda)_l \rightarrow 0\;,
\end{align}
where $\iota_{s_i X_w}: s_i X_w \hookrightarrow X^{\leq w}_l$. 
By Proposition \autoref{Prop:equivalencebetweenholonomiccats}(2) applied to the morphism $(\widetilde{s_i})^{l+\Delta}_l: X^{\leq w}_{l+\Delta} \rightarrow X^{\leq w}_l$ of Definition \autoref{Def:sllprime} we have
\begin{align*}
\iota_{s_i X_w*}(\Omega_{s_i X_w})\otimes {\sh{O}_{X^{\leq w}_l}(\lambda)}
=  (\widetilde{s_i}_* \sh{B}_w(\lambda))_l\;,
\end{align*}
see Definition \autoref{Def:s*Ml}. 
In this way we get the short exact sequence 
\begin{align}\label{eq:seswithBwlambda-andRwlambda}
0 \rightarrow \widetilde{s_i}_*\sh{B}_w(\lambda) \rightarrow \sh{R}_{*w}(\lambda)
\rightarrow \sh{B}_{s_iw}(\lambda) \rightarrow 0
\end{align}
in $\Hol(\lambda)$. 
Similarly, replacing $Z$ by $s_i Z$, which is still contained in $X^{\leq w}_l$,
in \eqref{eq:RGammaZdistinguishedtriangle} and $s_i X_w$ by $X_w$ in \eqref{eq:RGammaD} we obtain an
exact sequence
\begin{align}\label{eq:seswithBwlambdaandRwlambda}
0 \rightarrow \sh{B}_w(\lambda) \rightarrow \sh{R}_{*w}(\lambda) \rightarrow \widetilde{s_i}_*\sh{B}_{s_iw}(\lambda)
\rightarrow 0\;. 
\end{align}

The next lemma is a direct consequence of \eqref{eq:seswithBwlambda-andRwlambda} and Lemma
\autoref{Lemma:propertiesofHol0R(lambda)}(1) and (2). 
\begin{Lemma}
For $? \in \{*,!\}$ we have $\sh{R}_{?w}(\lambda) \in \Hol_0^{\RVerma \Z}(\lambda)$.
\qed
\end{Lemma}

\subsection{Properties of $\overline{\Ho^j}(X,\cdot)$ on $\Hol_0^{\RVerma}(\lambda)$}

Let us generalize \cite{KT95}[Proposition 3.3.1] and corollaries of it
from $\Hol_0(\lambda)$ to $\Hol_0^{\RVerma}(\lambda)$. 
\begin{Prop}\label{Prop:globalsectionsofMlisageneralizedweightmodule}
Let $\sh{M}=(\sh{M}_l)_{l \geq l_0}
\in \Hol_0^{\RVerma}(\lambda)$. 
Then $\Ho^j(Y_l,\sh{M}_l)$ is a generalized weight module with finite dimensional
generalized weight spaces and for $\mu \in \mathfrak{h}^*$ there is a $l^{\prime} \geq l_0$ such that
the natural map $\Ho^j(Y_{l_1},\sh{M}_{l_1})_{\mu} \rightarrow 
\Ho^j(Y_{l_2},\sh{M}_{l_2})_{\mu}$ is an isomorphism for
all $l_1 \geq l_2 \geq l^{\prime}$. 
\end{Prop}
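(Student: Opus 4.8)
The plan is to combine a dévissage along composition series with an induction on $\length(w)$, reducing everything to the objects treated in \cite{KT95}[Proposition 3.3.1] and to the objects $\sh{R}_{*w}(\lambda,\alpha)$, whose cohomology was computed in Theorem \autoref{Thm:H0hmod} and Lemma \autoref{Lemma:cohvanishingforR*wlambdaalpha}. The basic tool is a ``two out of three'' principle: in a short exact sequence $0 \to \sh{A} \to \sh{B} \to \sh{C} \to 0$ in $\Hol_0^{\RVerma}(\lambda)$ in which two of the three objects satisfy the assertion of the proposition, so does the third. Since $\mathfrak{h}$ is finite dimensional, the generalized weight modules with finite dimensional generalized weight spaces form a Serre subcategory of $\mathfrak{h}-\modulecat$, and $M \mapsto M_\mu$ is exact on it; granting this, the stabilization statement for the third object follows by applying the five lemma, weight space by weight space, to the morphism of (finite, exact) long exact cohomology sequences $\Ho^\bullet(Y_l, -)_\mu$ induced by the natural maps, once $l^{\prime}$ is chosen larger than the thresholds furnished for the two given objects. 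Since by Definition \autoref{Def:Hol0R(lambda)} every object of $\Hol_0^{\RVerma}(\lambda)$ has a composition series whose simple quotients are of the form $\sh{L}_w(\lambda)$ or $\widetilde{s_i}_*\sh{L}_w(\lambda)$ with $s_iw > w$, or $\sh{R}_{!*w}(\lambda,\alpha)$ with $s_iw < w$, it then suffices to prove the assertion for these simple objects.

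For the simple objects I would induct on $\length(w)$, proving simultaneously the assertion for $\sh{L}_w(\lambda)$, for $\widetilde{s_i}_*\sh{L}_w(\lambda)$, and for $\sh{R}_{!*w}(\lambda,\alpha)$ (the latter when $s_iw<w$, for every $\alpha\in\C$). The case $\sh{L}_w(\lambda)$ with $s_iw>w$ is \cite{KT95}[Proposition 3.3.1], since then $\sh{L}_w(\lambda)\in\Hol_0(\lambda)$; and for $s_iw<w$ we have $\sh{L}_w(\lambda)=\sh{R}_{!*w}(\lambda,0)$ by Remark \autoref{Rem:LwlambdaequalsR!*wlambda}, reducing to the next case. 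For $\sh{R}_{!*w}(\lambda,\alpha)$ with $s_iw<w$ I use the short exact sequence $0 \to \sh{R}_{!*w}(\lambda,\alpha) \to \sh{R}_{*w}(\lambda,\alpha) \to \sh{Q} \to 0$. By Theorem \autoref{Thm:H0hmod}(1) and Lemma \autoref{Lemma:cohvanishingforR*wlambdaalpha}, $\Ho^\bullet(Y_l,\sh{R}_{*w}(\lambda,\alpha)_l)$ is concentrated in degree $0$ and equals $\C[z,z^{-1}]\otimes_{\C}\Sym(\mathfrak{n}^-_i/\mathfrak{n}^-_l)\otimes_{\C}\C_{s_iw\cdot\lambda+\alpha\alpha_i}$, a genuine weight module; its $\mu$-weight space is finite dimensional and, for $l$ large, of $l$-independent dimension, because on a monomial $z^k\otimes m\otimes 1$ of weight $\mu$ the quantity $\mathrm{ht}_i(\mathrm{wt}(m))$ — the sum of the $\alpha_j$-coefficients with $j\neq i$ — is determined by $\mu$ (as $\mathrm{ht}_i(\alpha_i)=0$), which bounds both the number of factors of $m$ and, since $\{\beta\in\Phi^{>0}:\mathrm{ht}_i(\beta)=c\}$ is finite for every $c$ (write $\beta$ as a finite root plus a multiple of $\delta$ and use $\mathrm{ht}_i(\delta)\geq1$, valid since $|I|\geq2$), the roots that may occur, independently of $l$. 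Exactly as in the proof of \cite{KT95}[Proposition 3.3.1] the natural maps are, under this identification, induced by the projections $\mathfrak{n}^-_i/\mathfrak{n}^-_{l_1}\to\mathfrak{n}^-_i/\mathfrak{n}^-_{l_2}$, hence isomorphisms on each weight space for $l_1\geq l_2$ large, so $\sh{R}_{*w}(\lambda,\alpha)$ satisfies the assertion. On the open dense subset $X_w\cap s_iX_w$ of $\overline{X_w}$ one has $\sh{R}_{!*w}(\lambda,\alpha)=\sh{R}_{*w}(\lambda,\alpha)$, so by Lemma \autoref{Lemma:intersectionofXwandXsiW} $\supp\sh{Q}\subseteq\bigl(\bigcup_{v<w}\overline{X_v}\bigr)\cup s_i\overline{X_{s_iw}}$; every composition factor of $\sh{Q}$ is a simple object of $\Hol(\lambda)$ with irreducible support in some $\overline{X_u}$ with $u<w$ or some $s_i\overline{X_u}$ with $u\leq s_iw<w$, hence — using simple connectedness of the Schubert cells and Remark \autoref{Rem:LwlambdaequalsR!*wlambda} — isomorphic to $\sh{L}_u(\lambda)$, resp.\ $\widetilde{s_i}_*\sh{L}_u(\lambda)$, with $\length(u)<\length(w)$; by the inductive hypothesis and dévissage $\sh{Q}$ satisfies the assertion, and then so does $\sh{R}_{!*w}(\lambda,\alpha)$ by the two out of three principle. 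Finally, for $\widetilde{s_i}_*\sh{L}_w(\lambda)$ with $s_iw>w$ (so $s_i(s_iw)=w<s_iw$) one applies \eqref{eq:seswithBwlambdaandRwlambda} with $s_iw$ in place of $w$ to obtain $0\to\sh{B}_{s_iw}(\lambda)\to\sh{R}_{*s_iw}(\lambda)\to\widetilde{s_i}_*\sh{B}_w(\lambda)\to0$; here $\sh{B}_{s_iw}(\lambda)\in\Hol_0(\lambda)$ satisfies the assertion by \cite{KT95}[Proposition 3.3.1] and $\sh{R}_{*s_iw}(\lambda)$ does by the computation just made, hence so does $\widetilde{s_i}_*\sh{B}_w(\lambda)$; its quotient by the unique simple submodule $\widetilde{s_i}_*\sh{L}_w(\lambda)$ is $\widetilde{s_i}_*(\sh{B}_w(\lambda)/\sh{L}_w(\lambda))$, whose composition factors $\widetilde{s_i}_*\sh{L}_u(\lambda)$ with $u<w$ satisfy the assertion by the inductive hypothesis, so by dévissage so does this quotient, and one more application of the principle settles $\widetilde{s_i}_*\sh{L}_w(\lambda)$. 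The base case $\length(w)=0$ is immediate (the objects are skyscrapers); this proves the assertion for all simple objects, whence, by the dévissage of the first paragraph, for all of $\Hol_0^{\RVerma}(\lambda)$.

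The step I expect to be the main obstacle is the identification of the ``boundary'' quotients $\sh{Q}$ and $\widetilde{s_i}_*(\sh{B}_w(\lambda)/\sh{L}_w(\lambda))$ as objects of $\Hol_0^{\RVerma}(\lambda)$ all of whose composition factors are indexed by strictly shorter Weyl group elements: this needs the classification of the simple twisted holonomic $\sh{D}$-modules on $X$ supported on a Schubert variety, resp.\ on its $s_i$-translate, as precisely the $\sh{L}_u(\lambda)$, resp.\ the $\widetilde{s_i}_*\sh{L}_u(\lambda)$, which rests on the simple connectedness of the Schubert cells and on Remark \autoref{Rem:LwlambdaequalsR!*wlambda}. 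Secondary points requiring care are the $l$-independence and finiteness of the weight spaces of $\Ho^0(Y_l,\sh{R}_{*w}(\lambda,\alpha)_l)$ sketched above, and the verification — along the lines of \cite{KT95} but now for the relaxed local system — that the natural maps agree, under the identification of Theorem \autoref{Thm:H0hmod}, with the ones induced by $\mathfrak{n}^-_i/\mathfrak{n}^-_{l_1}\to\mathfrak{n}^-_i/\mathfrak{n}^-_{l_2}$.
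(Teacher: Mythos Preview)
Your argument is correct and follows essentially the same strategy as the paper: the two-out-of-three principle from the long exact sequence, reduction to the simple generators of $\Hol_0^{\RVerma}(\lambda)$, and an induction on length handling each simple by embedding it into a $*$-pushforward ($\sh{B}_w(\lambda)$, $\widetilde{s_i}_*\sh{B}_w(\lambda)$, or $\sh{R}_{*w}(\lambda,\alpha)$) whose cohomology is explicitly known and whose cokernel has strictly smaller support. The paper's treatment of $\widetilde{s_i}_*\sh{L}_w(\lambda)$ is slightly more direct than yours: it simply says ``completely similar'', meaning one uses the sequence $0\to\widetilde{s_i}_*\sh{L}_w(\lambda)\to\widetilde{s_i}_*\sh{B}_w(\lambda)\to\widetilde{s_i}_*\sh{N}\to 0$ and the fact that $(\widetilde{s_i}_*\sh{B}_w(\lambda))_l$ is itself the $*$-pushforward from the affine cell $s_iX_w$, so its cohomology admits the same explicit description as $\sh{B}_w(\lambda)_l$ (up to the twist of weights by $s_i$); your detour through the sequence \eqref{eq:seswithBwlambdaandRwlambda} for $s_iw$ is valid but unnecessary. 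Your more careful discussion of why the weight spaces of $\Ho^0(Y_l,\sh{R}_{*w}(\lambda,\alpha)_l)$ are finite dimensional and eventually constant, and your explicit flagging of the need to know that the boundary quotients $\sh{Q}$ and $\sh{N}$ lie in $\Hol_0^{\RVerma}(\lambda)$ (via the classification of simples supported on Schubert varieties and their $s_i$-translates), are points the paper leaves implicit.
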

\begin{proof}
We have $\supp \sh{M} = \bigcup_{j} \overline{X_{w_j}} \bigcup_k s_i \overline{X_{u_k}}$
for some finite collection $w_j, u_k \in W$. We can assume that $s_i \overline{X_{u_k}}$ is not
of the form $\overline{X_w}$. 
The proof is an induction on $\max_{j,k} \{\length(w_j),\length(u_k)\}$. 
The induction start is clear. Let 
\begin{align*}
0 \rightarrow \sh{M}^{(1)} \rightarrow \sh{M}^{(2)} \rightarrow \sh{M}^{(3)} \rightarrow 0
\end{align*}
be an exact sequence in $\Hol_0^{\RVerma}(\lambda)$. 
The long exact sequence
\begin{align*}
\dots & \rightarrow \Ho^{j-1}\left(Y_l,\sh{M}_l^{(3)}\right) \rightarrow \Ho^j\left(Y_l,\sh{M}_l^{(1)}\right) \rightarrow \Ho^j\left(Y_l,\sh{M}_l^{(2)}\right)
\rightarrow \Ho^j\left(Y_l,\sh{M}_l^{(3)}\right) \\
& \rightarrow \Ho^{j+1}\left(Y_l,\sh{M}_l^{(1)}\right) 
\rightarrow \dots
\end{align*}
of $\mathfrak{b}^-$-modules shows that if two of $\sh{M}^{(1),(2),(3)}$ satisfy the conclusion
of the proposition, then so does the third. As any object of $\Hol_0^{\RVerma}(\lambda)$
has finite length, it suffices to check the statement in the following cases: 
When $s_i w < w$ and $w \leq w_j$ for some $j$ for $\sh{M} = \sh{R}_{!*w}(\lambda,\mu)$.
When $s_i w > w$ and $w \leq w_j$ for some $j$ for $\sh{M}=\sh{L}_w(\lambda)$
and when $w \leq u_k$ for some $k$  for $\sh{M}=\widetilde{s_i}_*\sh{L}_w(\lambda)$. 
For $s_i w > w$ there is an exact sequence
\begin{align*}
0 \rightarrow \sh{L}_w(\lambda) \rightarrow \sh{B}_w(\lambda)
\rightarrow \sh{N} \rightarrow 0
\end{align*}
as a consequence of the definition of the
direct images.  As the second map in this sequence is an isomorphism on $X_w$
we have $\supp \sh{N} \subseteq \overline{X_w}\setminus X_w$. 
As $\overline{X_w}\setminus X_w = \bigcup_{j \in I:\; s_j w < w} \overline{X_{s_j w}}$
it follows that $\sh{N}$ satisfies the conclusion by induction hypothesis. 
Since $\sh{B}_w(\lambda)$ satisfies the conclusion by the
explicit expression for the $\mathfrak{h}$-module $\Ho^0(Y_l, \sh{B}_w(\lambda)_l)$ 
and we have $\Ho^j(Y_l,\sh{B}_w(\lambda)_l)=0$ for $j > 0$, see \cite{KT95}[Lemma 3.2.1], the statement
follows for $\sh{L}_w(\lambda)$.  The argument in case $\widetilde{s_i}_*\sh{L}_w(\lambda)$
is completely similar. 
For $s_i w < w$ there is again an exact sequence
\begin{align*}
0 \rightarrow \sh{R}_{!*w}(\lambda,\mu) 
\rightarrow \sh{R}_{*w}(\lambda,\mu) \rightarrow \sh{N}^{\prime} \rightarrow 0
\end{align*}
whose second map restricts to an isomorphism on $X_w \cap s_i X_w$. It follows
as above that $\supp \sh{N}^{\prime} \subseteq \overline{X_w} \setminus (X_w \cap s_i X_w)
= (\overline{X_w}\setminus X_w) \cup \overline{X_w}\setminus s_iX_w)$ and hence
the induction hypothesis shows that $\sh{N}^{\prime}$ satisfies the conclusion. 
Also, we have by Theorem \autoref{Thm:H0hmod}(1) an explicit expression for the $\mathfrak{h}$-module $\Ho^0(Y_l,\sh{R}_{*w}(\lambda,\mu)_l)$ and $\Ho^j(Y_l,\sh{R}_{*w}(\lambda,\mu)_l)=0$ 
for $j > 0$ was stated in Lemma \autoref{Lemma:cohvanishingforR*wlambdaalpha}. Thus the statement
also holds for $\sh{R}_{!*w}(\lambda,\mu)$.
\end{proof}

As a direct application of the definition of $\overline{\Ho^j}(X,\cdot)$, see \autoref{ssec:cohomologygroups}, we have
\begin{Cor}\label{Cor:lesforHol0R(lambda)}
A short exact sequence in 
$\Hol_0^{\RVerma}(\lambda)$ induces a long exact sequence of 
the $\mathfrak{g}$-modules $\overline{\Ho^j}(X,\cdot)$, $j \geq 0$.\qed
\end{Cor}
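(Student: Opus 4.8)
The plan is to deduce the statement from the ordinary long exact sequence in sheaf cohomology on a single finite-dimensional quotient $X^{\leq w}_l$, using Proposition \autoref{Prop:globalsectionsofMlisageneralizedweightmodule} to see that, in each fixed weight, the inverse system computing $\overline{\Ho^j}(X,\cdot)$ is already attained at finite level. Concretely, given a short exact sequence $0 \to \sh{M}^{(1)} \to \sh{M}^{(2)} \to \sh{M}^{(3)} \to 0$ in $\Hol_0^{\RVerma}(\lambda)$, I would first choose $w$ with $s_iw < w$ and $l_0$ such that all three objects lie in $\Hol(\lambda,\overline{X_w},X^{\leq w},l_0)$; this is possible since by Lemma \autoref{Lemma:propertiesofHol0R(lambda)}(3) the supports are relaxed admissible closed subsets and each such set is contained in some $\overline{X_v}$ with $s_iv<v$ (using $s_i\overline{X_u}\subseteq\overline{X_{s_iu}}$ when $s_iu>u$, as $\overline{X_u}\subseteq\overline{X_{s_iu}}$ and $s_i\overline{X_{s_iu}}=\overline{X_{s_iu}}$, and $s_i\overline{X_u}=\overline{X_u}$ when $s_iu<u$). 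By the exact equivalence \eqref{eq:equivalenceHol} this yields, for each $l\geq l_0$, a short exact sequence $0 \to \sh{M}^{(1)}_l \to \sh{M}^{(2)}_l \to \sh{M}^{(3)}_l \to 0$ of right $\sh{D}_{X^{\leq w}_l}(\lambda)$-modules, compatible with the structure isomorphisms \eqref{eq:gammakl} of the three systems.

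Taking sheaf cohomology on $X^{\leq w}_l$ then produces a long exact sequence of $\mathfrak{b}^-$-modules, natural in $l$. Next I would fix $\mu\in\mathfrak{h}^*$ and pass everywhere to $\mu$-generalized weight spaces: this is an exact operation because, by Proposition \autoref{Prop:globalsectionsofMlisageneralizedweightmodule}, each $\Ho^j(X^{\leq w}_l,\sh{M}^{(k)}_l)$ is the direct sum of its generalized weight spaces and all maps in sight are $\mathfrak{h}$-equivariant. Since the underlying $\sh{O}$-module of $\sh{M}^{(k)}_l$ is quasi-coherent with support in $\overline{X_w}$, the cohomology $\Ho^j(X^{\leq w}_l,\sh{M}^{(k)}_l)$ vanishes for $j>\length(w)$, uniformly in $l$ and $k$; hence Proposition \autoref{Prop:globalsectionsofMlisageneralizedweightmodule} provides an $l'=l'(\mu)$ so that for all relevant $j$ and all $k$ the transition maps on $\mu$-components are isomorphisms from $l'$ on, giving $\overline{\Ho^j}(X,\sh{M}^{(k)})_\mu \cong \Ho^j(X^{\leq w}_{l'},\sh{M}^{(k)}_{l'})_\mu$. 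The long exact sequence at level $l'$, restricted to weight $\mu$, is therefore the long exact sequence of the $\overline{\Ho^j}(X,\sh{M}^{(k)})_\mu$; taking the direct sum over all $\mu$ gives the desired long exact sequence of the $\overline{\Ho^j}(X,\sh{M}^{(k)})$.

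Finally I would check that the maps are $\mathfrak{g}$-module homomorphisms. The maps induced by $\sh{M}^{(1)}\to\sh{M}^{(2)}\to\sh{M}^{(3)}$ commute with each operator $\partial^{l+m}_l(v)$, $v\in\mathfrak{g}$, of \eqref{eq:partiallplusml} because they are $\sh{D}$-module maps; for the connecting homomorphisms one realizes the action of $v$ as a natural transformation $\Rderived\Gamma(X^{\leq w}_{l+m},-)\to\Rderived\Gamma(X^{\leq w}_l,-)$ built from $\partial^{l+m}_l(v)$ and the identifications $p^{l+m}_{l*}\sh{M}^{(k)}_{l+m}\cong\sh{M}^{(k)}_l$, so that the short exact sequence at level $l+m$ maps, as a distinguished triangle, to the one at level $l$; naturality of the connecting homomorphism then gives the commutation, and in the limit these become $\mathfrak{g}$-endomorphisms of the $\overline{\Ho^j}(X,\sh{M}^{(k)})$. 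I expect the only point really requiring care to be this bookkeeping — the interchange of inverse limit with generalized weight spaces, the uniformity in $j$ of the stabilization level $l'$, and the naturality needed for $\mathfrak{g}$-equivariance of the connecting maps; everything else is the standard long exact cohomology sequence.
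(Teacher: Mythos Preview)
Your proposal is correct and follows exactly the approach implicit in the paper's one-line proof, which simply cites Proposition \autoref{Prop:globalsectionsofMlisageneralizedweightmodule} and the definition of $\overline{\Ho^j}(X,\cdot)$ in Section \autoref{ssec:cohomologygroups}. You have spelled out all the steps the paper leaves to the reader --- the reduction to a common $\overline{X_w}$ with $s_iw<w$, the weight-by-weight stabilization (with the cohomological-dimension bound ensuring only finitely many $j$ are in play), and the $\mathfrak{g}$-equivariance of the connecting maps --- and each of these is handled correctly.
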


We can now deduce more about the cohomology of $\sh{R}_{!w}(\lambda)$. 
\begin{Cor}\label{Cor:cohvanishingforR!w(lambda)}\noindent
\begin{enumerate}
\item $\overline{\Ho^j}(X,\sh{R}_{!w}(\lambda))=0$ for $j > 0$ and $\lambda$ 
$\rho$-regular antidominant, i.e. $(\lambda+\rho)(h_i) < 0$ for all $i \in I$
\item (cf. \cite{KT95}[Lemma 3.3.6]) $\sum_{j=0}^{\infty}(-1)^j\charv \overline{\Ho^j}(X,\sh{M})
= \sum_{j=0}^{\infty}(-1)^j\charv \overline{\Ho^j}(X,\Dual\sh{M})$ holds for
$\sh{M} \in \Hol_0^{\RVerma \Z}(\lambda)$, where $\charv(\cdot)$ denotes the formal character
of the $\mathfrak{h}$-module. 
\item $\charv \overline{\Ho^0}(X,\sh{R}_{!w}(\lambda))=\charv\overline{\Ho^0}(X,\sh{R}_{*w}(\lambda))$
for $\lambda$ $\rho$-regular antidominant
\end{enumerate}
\end{Cor}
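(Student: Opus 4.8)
For part (1), the first step is to apply holonomic duality $\Dual$ to the short exact sequence \eqref{eq:seswithBwlambda-andRwlambda}. Using $\sh{M}_v(\lambda) = \Dual\sh{B}_v(\lambda)$, the relation $\Dual\sh{R}_{*w}(\lambda,\alpha)_l \cong \sh{R}_{!w}(\lambda,-\alpha)_l$ (so that $\Dual\sh{R}_{*w}(\lambda) \cong \sh{R}_{!w}(\lambda)$ for $\alpha \in \Z$), and Lemma \autoref{Lemma:Holonomicdualitycommuteswithwidetildesi*}, this yields a short exact sequence
\begin{align*}
0 \rightarrow \sh{M}_{s_iw}(\lambda) \rightarrow \sh{R}_{!w}(\lambda) \rightarrow \widetilde{s_i}_*\sh{M}_w(\lambda) \rightarrow 0
\end{align*}
in $\Hol_0^{\RVerma\Z}(\lambda)$, all three terms lying in that category. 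I would then run the long exact sequence of Corollary \autoref{Cor:lesforHol0R(lambda)} and use two vanishing inputs, valid when $\lambda$ is $\rho$-regular antidominant: $\overline{\Ho^j}(X,\sh{M}_v(\lambda)) = 0$ for $j > 0$, which is \cite{KT95}[Theorem 3.4.1], and $\overline{\Ho^j}(X,\widetilde{s_i}_*\sh{M}_w(\lambda)) \cong \overline{\Ho^j}(X,\sh{M}_w(\lambda))^{\widetilde{s_i}}$, which comes from Theorem \autoref{Thm:Hojofs_*M} (the isomorphism there restricts to the locally $\mathfrak{h}$-finite parts because $\widetilde{s_i}$ acts on $\mathfrak{h}$ by the reflection $s_i$ and so merely permutes generalized weight spaces) and hence also vanishes for $j > 0$ by \cite{KT95}[Theorem 3.4.1]. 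The long exact sequence then forces $\overline{\Ho^j}(X,\sh{R}_{!w}(\lambda)) = 0$ for $j > 0$.

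For part (2), I would follow the proof of \cite{KT95}[Lemma 3.3.6]. The assignment $\sh{M} \mapsto \sum_j (-1)^j \charv\overline{\Ho^j}(X,\sh{M})$ is additive on short exact sequences in $\Hol_0^{\RVerma\Z}(\lambda)$: the long exact sequence of Corollary \autoref{Cor:lesforHol0R(lambda)} has finite-dimensional generalized weight spaces by Proposition \autoref{Prop:globalsectionsofMlisageneralizedweightmodule} and is bounded (the sheaf cohomology vanishes in degrees above the dimension of the support, uniformly in $l$), so in each weight the alternating sum of dimensions telescopes; and $\Dual$ is an exact auto-equivalence preserving $\Hol_0^{\RVerma\Z}(\lambda)$ by Lemma \autoref{Lemma:propertiesofHol0R(lambda)}(1). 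It therefore suffices, after passing to the Grothendieck group, to check the identity on simple objects, and by Definition \autoref{Def:Hol0R(lambda)} and Remark \autoref{Rem:LwlambdaequalsR!*wlambda} every simple object of $\Hol_0^{\RVerma\Z}(\lambda)$ lies among $\sh{L}_w(\lambda)$, $\widetilde{s_i}_*\sh{L}_w(\lambda)$ and $\sh{R}_{!*w}(\lambda,\alpha)$ with $\alpha \in \Z$. Each of these is isomorphic to its own holonomic dual: $\Dual\sh{L}_w(\lambda) = \sh{L}_w(\lambda)$; $\Dual\widetilde{s_i}_*\sh{L}_w(\lambda) = \widetilde{s_i}_*\sh{L}_w(\lambda)$ by Lemma \autoref{Lemma:Holonomicdualitycommuteswithwidetildesi*}; and $\Dual\sh{R}_{!*w}(\lambda,\alpha) \cong \sh{R}_{!*w}(\lambda,-\alpha) \cong \sh{R}_{!*w}(\lambda,\alpha)$ for $\alpha \in \Z$, since $\sh{R}_{!*w}(\lambda,\alpha+1) \cong \sh{R}_{!*w}(\lambda,\alpha)$. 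On a self-dual module the identity is trivial, which completes part (2).

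For part (3), I would apply part (2) to $\sh{M} = \sh{R}_{*w}(\lambda) \in \Hol_0^{\RVerma\Z}(\lambda)$, together with $\Dual\sh{R}_{*w}(\lambda) \cong \sh{R}_{!w}(\lambda)$: Lemma \autoref{Lemma:cohvanishingforR*wlambdaalpha} collapses the left-hand alternating sum to $\charv\overline{\Ho^0}(X,\sh{R}_{*w}(\lambda))$, and part (1) collapses the right-hand alternating sum to $\charv\overline{\Ho^0}(X,\sh{R}_{!w}(\lambda))$ (this is where $\rho$-regular antidominance of $\lambda$ is needed), and equating the two is the assertion. The main obstacle I anticipate is the bookkeeping in part (1) — correctly identifying the three terms of the dualized sequence \eqref{eq:seswithBwlambda-andRwlambda} and confirming that the isomorphism of Theorem \autoref{Thm:Hojofs_*M} is compatible with passage to the locally $\mathfrak{h}$-finite part $\overline{\Ho^j}$; the rest is formal manipulation of long exact sequences and Grothendieck groups on top of the already-established results of \cite{KT95} and of the preceding sections.
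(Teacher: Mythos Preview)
Your proposal is correct and follows essentially the same route as the paper: dualize \eqref{eq:seswithBwlambda-andRwlambda} to obtain the short exact sequence $0 \to \sh{M}_{s_iw}(\lambda) \to \sh{R}_{!w}(\lambda) \to \widetilde{s_i}_*\sh{M}_w(\lambda) \to 0$, feed it into Corollary \autoref{Cor:lesforHol0R(lambda)} together with \cite{KT95}[Theorem 3.4.1(i)] and Theorem \autoref{Thm:Hojofs_*M} for part (1); reduce to self-dual simples for part (2); and combine (1), (2) and Lemma \autoref{Lemma:cohvanishingforR*wlambdaalpha} for part (3). Your write-up is simply more explicit than the paper's (in particular your remark that the isomorphism of Theorem \autoref{Thm:Hojofs_*M} restricts to $\overline{\Ho^j}$ because $\widetilde{s_i}$ permutes generalized weight spaces, and your separate mention of $\sh{R}_{!*w}(\lambda,\alpha)$ for $\alpha\in\Z$, which by Remark \autoref{Rem:LwlambdaequalsR!*wlambda} is already $\sh{L}_w(\lambda)$), but the argument is the same.
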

\begin{proof}
(1). This follows from Corollary \autoref{Cor:lesforHol0R(lambda)} applied to the
short exact sequence
\begin{align*}
0 \rightarrow \sh{M}_{s_iw}(\lambda) \rightarrow \sh{R}_{!w}(\lambda) \rightarrow \widetilde{s_i}_*\sh{M}_w(\lambda)
\rightarrow 0
\end{align*}
in $\Hol^{\RVerma \Z}_0(\lambda)$ dual to \eqref{eq:seswithBwlambda-andRwlambda}, \cite{KT95}[Theorem 3.4.1(i)] and 
Theorem \autoref{Thm:Hojofs_*M}.\\

(2). The property is stable under extension and hence it suffices to prove
it for $\sh{M}=\sh{L}_w(\lambda)$ and $\sh{M}=\widetilde{s_i}_*\sh{L}_w(\lambda)$, but
in that case $\Dual \sh{M} \cong \sh{M}$ as a consequence of Lemma \autoref{Lemma:Holonomicdualitycommuteswithwidetildesi*}.\\

(3) directly follows from (1), (2) and Lemma \autoref{Lemma:cohvanishingforR*wlambdaalpha}. 
\end{proof}

\section{Identification of $\overline{\Ho^j}(X,\sh{R}_{?w}(\lambda,\mu))$ as $\mathfrak{g}$-module in some cases}\label{sec:identificationofHjR!*wlambdaalphaasgmoduleinsomecases}

\subsection{Case $w=s_i$}\label{ssec:thecasew=si}
In this section we identify the cohomology groups 
\begin{align*}
\overline{\Ho^j}(X,\sh{R}_{?s_i}(\lambda,\mu))
\end{align*}
as $\mathfrak{g}$-modules for all values of $\lambda$, $\mu$, and $? \in \{*,!\}$. 
The method is similar to the identification $\overline{\Ho^0}(X,\sh{B}_1(\lambda))
\cong \MVerma(\lambda)$ of the global sections of the skyscraper $\sh{D}$-module
$\sh{B}_1(\lambda)$ supported at the distinguished point $1 \in X$. 
We only replace $1$ by the Schubert curve
$\overline{X_{s_i}}=X_{s_i}\sqcup X_1$ in $X$. We have isomorphisms
\begin{align}\label{eq:Schubertcurve}
\Proj^1 \xleftarrow{\cong} \SL_2/B^{\SL_2} \xrightarrow{\cong} G_i^{\prime}/G_i^{\prime >0}
\xrightarrow{\cong} G_i/G_i^{>0} \xrightarrow{\cong} \overline{X_{s_i}}\;. 
\end{align}
Here $B^{\SL_2}$ is the standard, upper triangular, Borel subgroup of $\SL_2$. $G_i^{>0}$ is the
subgroup of $G_i$, see Section \autoref{ssec:groupschemesTBPiGi}, with Lie algebra $\mathfrak{h}\oplus \C e_i$. $G_i^{\prime}$ is the derived
group of $G_i$ and $G_i^{\prime > 0}$ is the Borel subgroup of $G_i^{\prime}$ with Lie algebra $\C h_i \oplus \C e_i$. 
The restriction of
the group homomorphism $\SL_2 \rightarrow G_i^{\prime}$, which induces the second isomorphism, to the standard torus of $\SL_2$, identified with $\Gm$, is given by 
the first factor in the factorization of the coroot $h_i: \Gm \rightarrow T_i^{\prime} \hookrightarrow T$.
The fourth isomorphism is induced by
the action of $G_i$ on $1$, i.e. by the map $G_i \rightarrow X$, $g \mapsto g1$. 
The embedding $i_{s_i,l}$ of \eqref{eq:iwl} factors
into the composition of an open and a closed embedding 
\begin{align*}
\Gm \xrightarrow{j} \overline{X_{s_i}}=X_{s_i}\sqcup X_1 \xrightarrow{i_{s_i,l}^{\prime}} X^{\leq s_i}_l\;. 
\end{align*}
\begin{Rem}
We have $i_{s_i,l}^{\prime *}\sh{O}_{X^{\leq s_i}_l}(\lambda) \cong \sh{O}_{\Proj^1}(-\lambda(h_i))$.
This follows directly from the definition of $\sh{O}_{X^{\leq s_i}_l}(\lambda)$ and \eqref{eq:Schubertcurve}. 
\end{Rem}
We note that $ U^-_{s_i} = ^{s_i}U^-_{s_i} = U^-(\Phi^{<0}\setminus\{-\alpha_i\})$. 
The isomorphisms \eqref{eq:parametrizationofNXw} and \eqref{eq:parametrizationofNXsw} 
are equivariant for the group $T \ltimes U^-(\Phi^{<0}\setminus\{-\alpha_i\})$.
Let us denote $U^-(\Phi^{<0}\setminus\{-\alpha_i\})_l = U_l^- \backslash U^-(\Phi^{<0}\setminus\{-\alpha_i\})$. Thus we have an isomorphism of $T \ltimes U^-(\Phi^{<0}\setminus\{-\alpha_i\})_l$-schemes
\begin{align*}
X^{\leq s_i}_l &=N(X_{s_i})_l\cup N(X_1)_l  \cong U^-(\Phi^{<0}\setminus\{-\alpha_i\})_l \times (U(\alpha_i)\cup U^-(-\alpha_i))\\
&\cong U^-(\Phi^{<0}\setminus\{-\alpha_i\})_l \times \Proj^1
\end{align*}
and $\overline{X_{s_i}}$ identifies with $1\times \Proj^1$ under this
isomorphism. 

Let $\sh{M}$ be a $\sh{O}_{\Proj^1}(-\lambda(h_i))$-twisted holonomic right $\sh{D}$-module  on $\Proj^1$. 

\begin{Lemma}\label{Lemma:HojofextensionfromSchubertcurve}
There is an isomorphism of $\mathfrak{h}\ltimes \mathfrak{n}^-_i/\mathfrak{n}^-_l$-modules
\begin{align*}
\Ho^j(X^{\leq s_i}_l, i^{\prime}_{s_i,l *}\sh{M}) \cong \Uea(\mathfrak{n}^-_i/\mathfrak{n}^-_l)
\otimes_{\C}\Ho^j(\Proj^1, \sh{M})
\end{align*}
for $j \geq 0$. 
\end{Lemma}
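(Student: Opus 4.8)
The plan is to transport the computation to the product picture $X^{\leq s_i}_l \cong V \times \Proj^1$ furnished by the isomorphism of $T \ltimes U^-(\Phi^{<0}\setminus\{-\alpha_i\})_l$-schemes recalled just above, and then to run the same argument that computes the global sections of the skyscraper module $\sh{B}_1(\lambda)$, only with the distinguished point $1 \in X$ replaced by the Schubert curve. Here $V = U^-(\Phi^{<0}\setminus\{-\alpha_i\})_l = U^-_l \backslash U^-(\Phi^{<0}\setminus\{-\alpha_i\})$ is a finite dimensional unipotent algebraic group with $\Lie V = \mathfrak{n}^-_i/\mathfrak{n}^-_l$ and with $T$ acting on it by conjugation; under the isomorphism $\overline{X_{s_i}}$ becomes the constant slice $\{1\}\times\Proj^1$, the group $V$ acts trivially on the $\Proj^1$-factor, and $i^{\prime}_{s_i,l}$ factors as $\iota_1 \times \id_{\Proj^1}$ with $\iota_1\colon \{1\}\hookrightarrow V$ the closed point. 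On the right-hand side of the asserted isomorphism $\mathfrak{h}$ acts diagonally, by the weight grading on $\Uea(\mathfrak{n}^-_i/\mathfrak{n}^-_l)$ (with $1$ in weight $0$) and through the $T$-action on $\overline{X_{s_i}}$ on $\Ho^j(\Proj^1,\sh{M})$, while $\mathfrak{n}^-_i/\mathfrak{n}^-_l$ acts by the left regular action on $\Uea(\mathfrak{n}^-_i/\mathfrak{n}^-_l)$ and trivially on $\Ho^j(\Proj^1,\sh{M})$. I would model the whole argument on \cite{KT95}[Lemma 3.2.1] together with the local computation \cite{KT95}[(3.6.4)] underlying Theorem \autoref{Thm:H0hmod}.

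Since $V$ is an affine space, $\Pic(V\times\Proj^1) = \Pic(\Proj^1)$, so the preceding Remark identifies the twisting bundle $\sh{O}_{X^{\leq s_i}_l}(\lambda)$, up to isomorphism of line bundles, with the pullback of $\sh{O}_{\Proj^1}(-\lambda(h_i))$ along $\pr_2$; consequently the twisted $\sh{D}$-module direct image is an external product,
\begin{align*}
i^{\prime}_{s_i,l *}\sh{M} \cong \delta_1 \boxtimes \sh{M}, \qquad \delta_1 := \iota_{1*}\Omega_{\{1\}},
\end{align*}
the $\delta$-module of $V$ at the identity. As $\delta_1$ is supported at the point $1$ one has $\Ho^{>0}(V,\delta_1) = 0$, so flat base change and the projection formula along $\pr_2\colon V\times\Proj^1 \to \Proj^1$ give $\Rderived\pr_{2*}(\delta_1\boxtimes\sh{M}) \cong \Ho^0(V,\delta_1)\otimes_{\C}\sh{M}$, and since $\Ho^0(V,\delta_1)$ is a free $\C$-module this yields
\begin{align*}
\Ho^j\bigl(X^{\leq s_i}_l, i^{\prime}_{s_i,l *}\sh{M}\bigr) \cong \Ho^0(V,\delta_1)\otimes_{\C}\Ho^j(\Proj^1,\sh{M})
\end{align*}
for all $j \geq 0$, compatibly with the $\mathfrak{h}$-action (diagonal, coming from the product $T$-action on $V \times \Proj^1$) and with the $\mathfrak{n}^-_i/\mathfrak{n}^-_l$-action (through the first tensor factor only, as $V$ acts trivially on $\Proj^1$ and $\mathfrak{n}^-_l$ lies in the kernel of $\mathfrak{n}^-_i\to\Lie V$).

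It then remains to identify $\Ho^0(V,\delta_1)$, as a module over $\mathfrak{h}\ltimes(\mathfrak{n}^-_i/\mathfrak{n}^-_l) = \mathfrak{h}\ltimes\Lie V$, with $\Uea(\mathfrak{n}^-_i/\mathfrak{n}^-_l)$ equipped with the left regular action of $\Lie V$ and the weight grading. Choosing linear coordinates on $V\cong\Aff^N$ dual to a root-homogeneous basis of $\Lie V$, we have $\Ho^0(V,\delta_1) = \C[\partial_{x_1},\dots,\partial_{x_N}]\cong \Sym\Lie V$ as $T$-modules, and the infinitesimal left-translation action of $V$ on $\delta_1$ turns this, via the PBW theorem, into the regular $\Uea(\Lie V)$-module. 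The one genuinely computational point — namely that the Lie algebra action constructed from the right $\sh{D}$-module structure through the transfer-bimodule sections $\partial^{l+m}_l(v)$ of Section \autoref{ssec:cohomologygroups}, with the sign convention of \cite{KT95}[p. 39], really coincides with this regular action and the adjoint $\mathfrak{h}$-action — is local on $V$ and word-for-word the verification already performed for $\sh{B}_w(\lambda)$ and $\sh{M}_w(\lambda)$ in \cite{KT95} (reproduced in the proof of Theorem \autoref{Thm:H0hmod} with ${}^{s_i}U^-_{w,l}$ replaced by $V$); so I would conclude by invoking that computation rather than repeating it. This matching of the two descriptions of the Lie algebra action is the main obstacle, though it is only a bookkeeping one.
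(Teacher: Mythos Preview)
Your proposal is correct and follows essentially the same route as the paper: decompose $i^{\prime}_{s_i,l*}\sh{M}$ as an external product $\delta_1\boxtimes\sh{M}$ on $V\times\Proj^1$, apply K\"unneth (the paper cites \cite{Kem93}[Proposition 9.2.4], which is what your flat-base-change/projection-formula argument amounts to), and use $\Ho^0(V,\delta_1)\cong\Uea(\mathfrak{n}^-_i/\mathfrak{n}^-_l)$ together with the vanishing of higher cohomology of the point-supported module. Your discussion of the twisting bundle via $\Pic$ and of the module structure via PBW is more explicit than the paper's, but the underlying argument is the same.
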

\begin{proof}
We have $i^{\prime}_{s_i,l*}\sh{M} \cong \iota_*\C \boxtimes \sh{M}$ (external 
tensor product of $\sh{O}$-modules), where
$\iota: \{1\} \hookrightarrow U^-(\Phi^{<0}\setminus\{-\alpha_i\})_l$,
see e.g. \cite{Kas00}[p. 80]. Then we apply the Kuenneth formula
\cite{Kem93}[Proposition 9.2.4] to $\iota_*\C \boxtimes \sh{M}$ and use the fact that 
\begin{align*}
\Ho^j(U^-(\Phi^{<0}\setminus\{-\alpha_i\})_l,\iota_*\C)\cong \begin{cases} \Uea\mathfrak(\mathfrak{n}^-_i/\mathfrak{n}^-_l) & j=0 \\
0 & j > 0 \end{cases}
\end{align*}
as $\mathfrak{h}\ltimes \mathfrak{n}^-_i/\mathfrak{n}^-_l$-module. 
\end{proof}

By \eqref{eq:Schubertcurve} we see that the
subalgebra $\mathfrak{g}_i$ of $\mathfrak{g}$ 
acts on the subspace $\C1 \otimes \Ho^j(\Proj^1,\sh{M})$
of $\Ho^j(X^{\leq s_i}_l,i^{\prime}_{s_i,l*}\sh{M})$. Since $\sh{M}$ is holonomic, 
$i^{\prime}_{s_i *}\sh{M}= (i^{\prime}_{s_i,l*}\sh{M})_{l \geq l_0}$ is an object of $\Hol(\lambda)$. 

\begin{Thm}\label{Thm:HojofdirectimageofSchubertcurve}
 $\overline{\Ho^j}(X,i^{\prime}_{s_i*}\sh{M}) \cong \Uea\mathfrak{g}\otimes_{\Uea\mathfrak{p}_i}
\Ho^j(\Proj^1,\sh{M})$ as $\mathfrak{g}$-module, where on the rhs $\mathfrak{p}_i$ acts via \eqref{eq:projfrompitogi}. In particular $\overline{\Ho^j}(X,i^{\prime}_{s_i*}\sh{M})=0$
for $j \geq 2$. 
\end{Thm}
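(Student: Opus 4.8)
The plan is to reduce the computation of $\overline{\Ho^j}(X, i'_{s_i*}\sh{M})$ to the local computation in Lemma \autoref{Lemma:HojofextensionfromSchubertcurve} together with the construction of the $\mathfrak{g}$-action on the cohomology of a twisted $\sh{D}$-module recalled in Section \autoref{ssec:cohomologygroups}. The overall structure should parallel the proof of \cite{KT95}[Proposition 3.3.1 / Lemma 3.2.1] and the identification $\overline{\Ho^0}(X,\sh{B}_1(\lambda)) \cong \MVerma(\lambda)$, which is exactly the $\dim = 0$ prototype of the present statement.

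First I would observe that $i'_{s_i*}\sh{M} \in \Hol(\lambda, \overline{X_{s_i}}, X^{\leq s_i}, l_0)$, so that $\Ho^j(X, i'_{s_i*}\sh{M}) = \varprojlim_l \Ho^j(X^{\leq s_i}_l, i'_{s_i,l*}\sh{M})$. Lemma \autoref{Lemma:HojofextensionfromSchubertcurve} gives, for each $l$, an isomorphism of $\mathfrak{h} \ltimes \mathfrak{n}^-_i/\mathfrak{n}^-_l$-modules $\Ho^j(X^{\leq s_i}_l, i'_{s_i,l*}\sh{M}) \cong \Uea(\mathfrak{n}^-_i/\mathfrak{n}^-_l) \otimes_{\C} \Ho^j(\Proj^1,\sh{M})$, and these are compatible with the transition maps $p^k_{l*}$ in $l$ (this compatibility is the routine check that the Künneth isomorphism is natural in the first factor, since the projections $X^{\leq s_i}_k \to X^{\leq s_i}_l$ are the product of the identity on $\Proj^1$ with the projection $U^-(\Phi^{<0}\setminus\{-\alpha_i\})_k \to U^-(\Phi^{<0}\setminus\{-\alpha_i\})_l$). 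Passing to the inverse limit and then taking the generalized-weight submodule $\overline{(\cdot)}$ yields an isomorphism of $\mathfrak{h}$-modules $\overline{\Ho^j}(X, i'_{s_i*}\sh{M}) \cong \Uea\mathfrak{n}^-_i \otimes_{\C} \Ho^j(\Proj^1,\sh{M})$ — here I use that the generalized weight spaces of $\Uea(\mathfrak{n}^-_i/\mathfrak{n}^-_l) \otimes \Ho^j(\Proj^1,\sh{M})$ stabilize in $l$ (Proposition \autoref{Prop:globalsectionsofMlisageneralizedweightmodule}, valid since $i'_{s_i*}\sh{M} \in \Hol_0^{\RVerma}(\lambda)$ when $\sh{M}$ has a suitable composition series, or directly from the explicit form), so that $\varprojlim$ commutes with taking $\mu$-weight spaces. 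By PBW this right-hand side is $\Uea\mathfrak{g} \otimes_{\Uea\mathfrak{p}_i} \Ho^j(\Proj^1,\sh{M})$ as a vector space, since $\Uea\mathfrak{g} \cong \Uea\mathfrak{n}^-_i \otimes_{\C} \Uea\mathfrak{p}_i$ as a left $\Uea\mathfrak{n}^-_i$-module.

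It then remains to upgrade this vector-space identification to an isomorphism of $\mathfrak{g}$-modules, which is the main point. By \eqref{eq:Schubertcurve} the subalgebra $\mathfrak{g}_i$ acts on $\C 1 \otimes \Ho^j(\Proj^1,\sh{M})$ through its geometric action on $\overline{X_{s_i}} \cong \Proj^1$, and $\mathfrak{n}_i$ annihilates this subspace because the embedding $\overline{X_{s_i}} \hookrightarrow X^{\leq s_i}_l$ sits inside $1 \times \Proj^1 \subseteq U^-(\Phi^{<0}\setminus\{-\alpha_i\})_l \times \Proj^1$, so along $\overline{X_{s_i}}$ the operators $\partial^{l+m}_l(v)$ for $v \in \mathfrak{n}_i$ act trivially (this is the analogue of $\mathfrak{n}$ annihilating the highest weight vector in $\MVerma(\lambda)$; concretely one checks it from the description of $\partial^{l+m}_l(v)$ via the $P_i^-$-equivariant structure on $\sh{O}_X(-\lambda)$ and the fact that $\mathfrak{n}_i \subseteq \Lie U^-_l + \Lie(\text{transverse directions})$ acts by vector fields tangent to the $U^-$-direction). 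Hence the $\Uea\mathfrak{p}_i$-module structure on $\C 1 \otimes \Ho^j(\Proj^1,\sh{M})$ is exactly the one obtained via the projection \eqref{eq:projfrompitogi} applied to $\Ho^j(\Proj^1,\sh{M})$ regarded as a $\mathfrak{g}_i$-module. Meanwhile the $\mathfrak{n}^-_i$-action, by the very construction of the $\mathfrak{g}$-action from the right $\sh{D}$-module structure (which is "first order" in the transfer bimodule), is free on $\Uea\mathfrak{n}^-_i \otimes (\C 1 \otimes \Ho^j(\Proj^1,\sh{M}))$ and matches the left multiplication on the induced module. Combining these gives the canonical $\mathfrak{g}$-map $\Uea\mathfrak{g} \otimes_{\Uea\mathfrak{p}_i} \Ho^j(\Proj^1,\sh{M}) \to \overline{\Ho^j}(X, i'_{s_i*}\sh{M})$ sending $1 \otimes m \mapsto 1 \otimes m$; it is surjective because $\mathfrak{n}^-_i$ generates, and injective by comparing with the vector-space isomorphism already established via PBW. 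Finally, $\overline{\Ho^j}(X, i'_{s_i*}\sh{M}) = 0$ for $j \geq 2$ because $\Ho^j(\Proj^1,\sh{M}) = 0$ for $j \geq 2$, as $\Proj^1$ is one-dimensional and $\sh{M}$ is a coherent $\sh{D}$-module (so has cohomological amplitude $\leq \dim \Proj^1 = 1$ for its de Rham/global sections cohomology).

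\textbf{Main obstacle.} The delicate step is verifying that the $\mathfrak{g}_i$-action coming from the global $\sh{D}$-module construction on the slice $\C 1 \otimes \Ho^j(\Proj^1,\sh{M})$ genuinely coincides with the intrinsic $\mathfrak{g}_i$-action on $\Ho^j(\Proj^1,\sh{M})$ from \eqref{eq:Schubertcurve}, and that $\mathfrak{n}_i$ acts by zero there. This requires carefully unwinding the definition of $\partial^{l+m}_l(v)$ in terms of the $P_i^-$-equivariant structure of $\sh{O}_X(-\lambda)$ and checking compatibility with the isomorphisms of \eqref{eq:Schubertcurve} — essentially reproducing, in the relative setting $U^-(\Phi^{<0}\setminus\{-\alpha_i\})_l \times \Proj^1$, the standard Borel–Weil-type computation on $\Proj^1 \cong G_i/G_i^{>0}$. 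Everything else (the Künneth naturality, the inverse limit, the PBW bookkeeping) is routine.
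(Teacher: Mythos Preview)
Your approach is essentially identical to the paper's: pass Lemma \ref{Lemma:HojofextensionfromSchubertcurve} to the limit to get an $\mathfrak{h}\ltimes\mathfrak{n}_i^-$-isomorphism $\overline{\Ho^j}(X,i'_{s_i*}\sh{M})\cong \Uea\mathfrak{n}_i^-\otimes_{\C}\Ho^j(\Proj^1,\sh{M})$, observe that the embedding $\Ho^j(\Proj^1,\sh{M})\hookrightarrow \overline{\Ho^j}(X,i'_{s_i*}\sh{M})$ is a $\mathfrak{p}_i$-map, apply Frobenius reciprocity to get a $\mathfrak{g}$-map from the induced module, and conclude bijectivity by PBW. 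The paper simply asserts the $\mathfrak{p}_i$-map property (relying on the sentence before the theorem about the $\mathfrak{g}_i$-action via \eqref{eq:Schubertcurve}), whereas you correctly isolate it as the one nontrivial point.

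One caution: your parenthetical justification that $\mathfrak{n}_i$ annihilates $\C 1\otimes \Ho^j(\Proj^1,\sh{M})$ is garbled --- $\mathfrak{n}_i$ consists of \emph{positive} root spaces (those in $\Phi^{>0}\setminus\{\alpha_i\}$), so the inclusion ``$\mathfrak{n}_i\subseteq \Lie U^-_l+\dots$'' is false, and ``tangent to the $U^-$-direction'' is not the right picture. The correct mechanism is a weight argument: for $v\in\mathfrak{n}_i$ of root $\beta\in\Phi^{>0}\setminus\{\alpha_i\}$, the operator $\partial^{l+m}_l(v)$ shifts $\mathfrak{h}$-weight by $\beta$, but the weights occurring in $\C 1\otimes\Ho^j(\Proj^1,\sh{M})$ all lie in $\lambda+\Z\alpha_i$ and those of $\Uea\mathfrak{n}_i^-\otimes\Ho^j(\Proj^1,\sh{M})$ lie in $\lambda+\Z\alpha_i-\sum_{j}\Z_{\geq 0}\alpha_j$; adding $\beta$ to an element of $\lambda+\Z\alpha_i$ leaves this cone only via the zero weight space, forcing the action to vanish. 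Since you already flag this step as the main obstacle to be checked carefully, this is a correction of detail rather than of strategy.
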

\begin{proof}
Lemma \autoref{Lemma:HojofextensionfromSchubertcurve} induces an isomorphism
\begin{align}\label{eq:Hojis_i*M}
\overline{\Ho^j}(X,i^{\prime}_{s_i*}\sh{M})\cong \Uea \mathfrak{n}_i^-\otimes_{\C}
\Ho^j(\Proj^1,\sh{M})
\end{align}
of $\mathfrak{h}\ltimes \mathfrak{n}_i^-$-modules. The embedding $M=\Ho^j(\Proj^1,\sh{M}) \hookrightarrow N=\overline{\Ho^j}(X,i^{\prime}_{s_i*}\sh{M})$ is a map of $\mathfrak{p}_i$-modules. 
Via the adjunction $\Hom_{\mathfrak{g}}(\Uea\mathfrak{g}\otimes_{\Uea\mathfrak{p_i}}M,N)
\cong \Hom_{\mathfrak{p}_i}(M,N)$ we obtain a $\mathfrak{g}$-map $\phi: \Uea\mathfrak{g}\otimes_{\Uea\mathfrak{p}_i}M \rightarrow N$. 
\eqref{eq:Hojis_i*M} implies that $\phi$ is surjective and injective.
\end{proof}

\cite{Eic15}[Theorem 5.2] implies
\begin{Cor}
Let $\mu \in \C\setminus \Z$. We have $\overline{\Ho^0}(X,\sh{R}_{*s_i}(\lambda,\mu))\cong \RVerma(\lambda,\mu)$ as $\mathfrak{g}$-module. 
\end{Cor}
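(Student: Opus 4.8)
The plan is to deduce this from Theorem \autoref{Thm:HojofdirectimageofSchubertcurve} by choosing the twisted holonomic $\sh{D}$-module $\sh{M}$ on $\Proj^1\cong\overline{X_{s_i}}$ correctly, and then to quote \cite{Eic15}[Theorem 5.2] for the identification of its global sections.

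First I would unwind Definition \autoref{Def:DmodR*!wlambdaalphal} in the case $w=s_i$. Since $s_iw=1$ the group $^{s_i}U_{s_iw}$ is trivial, so $\Omega^{(\alpha)}_{\C^{\times}}\boxtimes\Omega_{^{s_i}U_{s_iw}}=\Omega^{(\alpha)}_{\C^{\times}}$, and $i_{s_i,l}$ factors as $i_{s_i,l}=i^{\prime}_{s_i,l}\circ j$ with $j:\C^{\times}\hookrightarrow\overline{X_{s_i}}$ the open immersion and $i^{\prime}_{s_i,l}:\overline{X_{s_i}}\hookrightarrow X^{\leq s_i}_l$ the closed one. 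Using functoriality of the twisted $*$-direct image under composition, together with the Remark $i_{s_i,l}^{\prime *}\sh{O}_{X^{\leq s_i}_l}(\lambda)\cong\sh{O}_{\Proj^1}(-\lambda(h_i))$, I obtain, compatibly in $l$,
\begin{align*}
\sh{R}_{*s_i}(\lambda,\alpha)_l\cong i^{\prime}_{s_i,l*}\sh{M},\qquad \sh{M}:=j_*\bigl(\Omega^{(\alpha)}_{\C^{\times}}\otimes j^*\sh{O}_{\Proj^1}(-\lambda(h_i))\bigr),
\end{align*}
and hence $\sh{R}_{*s_i}(\lambda,\alpha)\cong i^{\prime}_{s_i*}\sh{M}$ in $\Hol(\lambda)$. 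Here the coordinate $x$ defining $\Omega^{(\alpha)}_{\C^{\times}}$ is the coordinate on $U(\alpha_i)\subseteq\Proj^1$ and $\C^{\times}=\Proj^1\setminus\{0,\infty\}$, so $\sh{M}$ is, up to the fixed twist, precisely the $*$-extension to $\Proj^1$ of the rank-one local system with monodromy $e^{2\pi i\alpha}$ studied in \cite{Eic15}; because $\alpha\notin\Z$ this local system does not extend across $\{0,\infty\}$, so $\sh{M}$ is a simple object.

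Next, Theorem \autoref{Thm:HojofdirectimageofSchubertcurve} applied to $\sh{M}$ gives an isomorphism of $\mathfrak{g}$-modules
\begin{align*}
\overline{\Ho^0}(X,\sh{R}_{*s_i}(\lambda,\alpha))\cong\Uea\mathfrak{g}\otimes_{\Uea\mathfrak{p}_i}\Ho^0(\Proj^1,\sh{M}),
\end{align*}
where $\mathfrak{p}_i$ acts through the projection \eqref{eq:projfrompitogi}. It then remains to identify the $\mathfrak{g}_i$-module $\Ho^0(\Proj^1,\sh{M})$. Writing $\mathfrak{g}_i=\{h\in\mathfrak{h}\mid\alpha_i(h)=0\}\oplus\mathfrak{g}_i^{\prime}$ and using the identification $\mathfrak{g}_i^{\prime}\cong\mathfrak{sl}_2$ of Definition \autoref{Def:RVermalambdalapha}, together with the chain of isomorphisms \eqref{eq:Schubertcurve} relating $\overline{X_{s_i}}$ to $\SL_2/B^{\SL_2}$ and the Remark identifying the restricted twist with $\sh{O}_{\Proj^1}(-\lambda(h_i))$, the module $\Ho^0(\Proj^1,\sh{M})$ is exactly the object computed in \cite{Eic15}[Theorem 5.2], namely $\C_{\lambda}\otimes_{\C}\RVerma^{\mathfrak{sl}_2}(\lambda(h_i),\alpha)$ as a $\mathfrak{g}_i$-module. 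Substituting this into the display above and comparing with Definition \autoref{Def:RVermalambdalapha} yields $\overline{\Ho^0}(X,\sh{R}_{*s_i}(\lambda,\alpha))\cong\RVerma(\lambda,\alpha)$.

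The substantive input is thus \cite{Eic15}[Theorem 5.2]; the only delicate point here is the bookkeeping in the first step, where one must verify that the geometric monodromy parameter $\alpha$ of $\Omega^{(\alpha)}_{\C^{\times}}$ matches the parameter $\alpha$ of $\RVerma^{\mathfrak{sl}_2}(\lambda(h_i),\alpha)$ on the nose — with no sign flip or integer shift — and that the twist is correctly identified through the Remark and \eqref{eq:Schubertcurve}. This is also where the hypothesis $\alpha\notin\Z$ is used: it guarantees that $\sh{M}$ and $\RVerma^{\mathfrak{sl}_2}(\lambda(h_i),\alpha)$ are genuine objects rather than the isomorphism classes $\sh{R}_{!*s_i}(\lambda)$ and $\RVerma^{\mathfrak{sl}_2}(\lambda(h_i),?)$ one gets for integral $\alpha$, so that the $*$-, $!$- and $!*$-extensions coincide and the comparison is unambiguous.
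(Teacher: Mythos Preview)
Your argument is correct and is exactly the paper's approach: the corollary is obtained by specializing Theorem \autoref{Thm:HojofdirectimageofSchubertcurve} to $\sh{M}=j_*(\Omega^{(\alpha)}_{\C^{\times}}\otimes j^*\sh{O}_{\Proj^1}(-\lambda(h_i)))$ and then invoking \cite{Eic15}[Theorem 5.2] to identify $\Ho^0(\Proj^1,\sh{M})$ as the $\mathfrak{g}_i$-module $\C_{\lambda}\otimes\RVerma^{\mathfrak{sl}_2}(\lambda(h_i),\alpha)$. The paper records this in one line, while you have spelled out the unwinding of the definitions; the content is the same.
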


\cite{Eic15}[Theorem 5.3 and 5.4] imply
\begin{Cor}
Let $-\lambda(h_i) \geq 2$. We have isomorphisms of $\mathfrak{g}$-modules
$\overline{\Ho^0}(X,\sh{R}_{*s_i}(\lambda))\cong \RVerma(\lambda,=)^{\veesl2}$
and $\overline{\Ho^0}(X,\sh{R}_{!s_i}(\lambda)) \cong \RVerma(\lambda,=)$,
$\overline{\Ho^1}(X,\sh{R}_{!s_i}(\lambda))=0$. 
\end{Cor}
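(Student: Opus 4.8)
The plan is to reduce everything to twisted $\sh{D}$-modules on $\Proj^1$ via Theorem~\autoref{Thm:HojofdirectimageofSchubertcurve}, and then to invoke \cite{Eic15}[Theorem 5.3 and 5.4], in complete analogy with the preceding corollary (where \cite{Eic15}[Theorem 5.2] was used instead). First I would unwind the definitions in the case $w=s_i$: then $s_iw=1$, so $U_{s_iw}$ is trivial, $X_{s_i}\cap s_iX_{s_i}\cong\C^{\times}$, and Definition~\autoref{Def:DmodR*!wlambdaalphal} reads $\sh{R}_{*,!s_i}(\lambda,\alpha)_l=i_{s_i,l*,!}\bigl(\Omega^{(\alpha)}_{\C^{\times}}\otimes i_{s_i,l}^*\sh{O}_{X^{\leq s_i}_l}(\lambda)\bigr)$. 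Using the factorization $\C^{\times}\xrightarrow{j}\overline{X_{s_i}}\xrightarrow{i^{\prime}_{s_i,l}}X^{\leq s_i}_l$ of $i_{s_i,l}$ into an open embedding followed by a closed one, together with the Remark that $i_{s_i,l}^{\prime *}\sh{O}_{X^{\leq s_i}_l}(\lambda)\cong\sh{O}_{\Proj^1}(-\lambda(h_i))$, I obtain $\sh{R}_{*s_i}(\lambda,\alpha)\cong i^{\prime}_{s_i*}\sh{M}_*$ and $\sh{R}_{!s_i}(\lambda,\alpha)\cong i^{\prime}_{s_i*}\sh{M}_!$ in $\Hol(\lambda)$, where $\sh{M}_*$ (resp.\ $\sh{M}_!$) is the $*$-extension (resp.\ $!$-extension) to $\Proj^1$ of the rank-one $\sh{O}_{\Proj^1}(-\lambda(h_i))$-twisted $\sh{O}$-coherent right $\sh{D}$-module $\Omega^{(\alpha)}_{\C^{\times}}\otimes j^*\sh{O}_{\Proj^1}(-\lambda(h_i))$ on $\C^{\times}$, which has monodromy $e^{2\pi i\alpha}$; for $\alpha\in\Z$ the underlying connection is trivial, yet $\sh{M}_*$ and $\sh{M}_!$ are still distinct.

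Next, Theorem~\autoref{Thm:HojofdirectimageofSchubertcurve} applied to $\sh{M}_*$ and $\sh{M}_!$ yields isomorphisms of $\mathfrak{g}$-modules
\[
\overline{\Ho^j}(X,\sh{R}_{*s_i}(\lambda,\alpha))\cong\Uea\mathfrak{g}\otimes_{\Uea\mathfrak{p}_i}\Ho^j(\Proj^1,\sh{M}_*)\,,\qquad \overline{\Ho^j}(X,\sh{R}_{!s_i}(\lambda,\alpha))\cong\Uea\mathfrak{g}\otimes_{\Uea\mathfrak{p}_i}\Ho^j(\Proj^1,\sh{M}_!)\,,
\]
with $\mathfrak{p}_i$ acting through the projection \eqref{eq:projfrompitogi}, and in particular both sides vanish for $j\geq2$. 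Thus it remains only to compute $\Ho^j(\Proj^1,\sh{M}_*)$ and $\Ho^j(\Proj^1,\sh{M}_!)$ as $\mathfrak{g}_i$-modules, which is exactly the content of \cite{Eic15}. Specializing to $\alpha\in\Z$ (so that $\sh{R}_{*,!s_i}(\lambda,\alpha)$ represents the class $\sh{R}_{*,!s_i}(\lambda)$) and assuming $-\lambda(h_i)\geq2$, \cite{Eic15}[Theorem 5.3] gives $\Ho^0(\Proj^1,\sh{M}_*)\cong\C_{\lambda}\otimes_{\C}\RVerma^{\mathfrak{sl}_2}(\lambda(h_i),=)^{\veesl2}$, while \cite{Eic15}[Theorem 5.4] gives $\Ho^0(\Proj^1,\sh{M}_!)\cong\C_{\lambda}\otimes_{\C}\RVerma^{\mathfrak{sl}_2}(\lambda(h_i),=)$ and $\Ho^1(\Proj^1,\sh{M}_!)=0$, all as $\mathfrak{g}_i$-modules. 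Here the $\C_{\lambda}$-factor records the action of the summand $\{h\in\mathfrak{h}:\alpha_i(h)=0\}$ of \eqref{eq:projfrompitogi}, the subalgebra $\mathfrak{g}_i^{\prime}\cong\mathfrak{sl}_2$ (via $e_i\mapsto e$, $f_i\mapsto f$, $h_i\mapsto h$) supplies the $\RVerma^{\mathfrak{sl}_2}$-factor, and the twist $\sh{O}_{\Proj^1}(-\lambda(h_i))$ pins down the $h_i$-weight, all through the isomorphisms \eqref{eq:Schubertcurve}.

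Substituting these computations into the previous display and comparing with Definition~\autoref{Def:RVermalambdalapha} gives $\overline{\Ho^0}(X,\sh{R}_{*s_i}(\lambda))\cong\RVerma(\lambda,=)^{\veesl2}$, $\overline{\Ho^0}(X,\sh{R}_{!s_i}(\lambda))\cong\RVerma(\lambda,=)$, and $\overline{\Ho^1}(X,\sh{R}_{!s_i}(\lambda))=0$, which is the assertion. (The vanishing $\overline{\Ho^j}(X,\sh{R}_{*s_i}(\lambda))=0$ for $j>0$ is already contained in Lemma~\autoref{Lemma:cohvanishingforR*wlambdaalpha}, consistently with the above.)

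The real work is not in the affine geometry but in the bookkeeping of equivariant structures at the last step: one must verify that the $\mathfrak{g}_i$-module structure on $\Ho^j(\Proj^1,\sh{M}_*)$ and $\Ho^j(\Proj^1,\sh{M}_!)$ induced through \eqref{eq:Schubertcurve} agrees with the $\mathfrak{sl}_2$-module structure used in \cite{Eic15} under $e_i\mapsto e$, $f_i\mapsto f$, $h_i\mapsto h$, and that the coordinate $x$ on $\C^{\times}$ — hence the monodromy parameter $\alpha$ — is normalized there as it is here. Once these conventions are aligned, all three isomorphisms follow at once from Theorem~\autoref{Thm:HojofdirectimageofSchubertcurve} and the two cited theorems.
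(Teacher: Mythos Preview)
Your proposal is correct and follows exactly the paper's approach: the paper's proof is the single line ``\cite{Eic15}[Theorem 5.3 and 5.4] imply'', relying on Theorem~\ref{Thm:HojofdirectimageofSchubertcurve} to reduce to cohomology on $\Proj^1$ and then reading off the $\mathfrak{g}_i$-module from the cited theorems. Your write-up simply spells out in more detail the factorization $i_{s_i,l}=i^{\prime}_{s_i,l}\circ j$ and the matching of conventions with \cite{Eic15}, which the paper leaves implicit.
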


In the same way \cite{Eic15}[Theorem 5.1-5.4] yield an explicit description of
the $\mathfrak{g}$-modules 
$\overline{\Ho^j}(X,\sh{B}_{s_i}(\lambda))$, $\overline{\Ho^j}(X,\sh{M}_{s_i}(\lambda))$, $\overline{\Ho^j}(X,\sh{R}_{*s_i}(\lambda,\mu))$ 
and $\overline{\Ho^j}(X,\sh{R}_{!s_i}(\lambda,\mu))$ as induced modules for $j =0,1$ and all values
of $\lambda$ and $\mu$. Rather than listing every case here, we comment on these results. We recall that the condition that $\lambda$ be
$\rho$-regular antidominant implies $-\lambda(h_i)\geq 2$.
The vanishing of the
$\overline{\Ho^1}(X,\cdot)$ obtained in the above way 
is consistent with \cite{KT95}[Theorem 3.4.1(i)] and in fact these $\overline{\Ho^1}(X,\cdot)$ vanish for 
any $\lambda$ such that $-\lambda(h_i) \geq 2$.
The nonvanishing $\overline{\Ho^1}(X,\cdot)$ are parabolic Verma modules 
(this notion was recalled in \autoref{ssec:commentsonlit}). 
In particular, by \cite{Eic15}[Theorem 5.1], we have $\overline{\Ho^1}(X,\sh{M}_{s_i}(\lambda)) \neq 0$ if and only if $-\lambda(h_i) \leq 0$.\par
Using Theorem \cite{Eic15}[Theorem 5.5] one can produce
further induced $\mathfrak{g}$-modules as $\overline{\Ho^0}(X,\cdot)$ of
objects of $\Hol(\lambda)$ similar to $\sh{R}_{?s_i}(\lambda)$, $? \in \{*,!\}$.
These $\mathfrak{g}$-modules are not isomorphic to the ones above, but have the same structure as $\mathfrak{h}$-module.

\subsection{Case $\sh{R}_{*w}(\lambda)$}
In this section we identify $\overline{\Ho^0}(X,\sh{R}_{*w}(\lambda))$
as a $\mathfrak{g}$-module for all $w \in W$. The case $w=s_i$ has already 
been covered in \autoref{ssec:thecasew=si}. 

\begin{Thm}\label{Thm:Ho0R*wlambdaasgmod}
Let $\lambda$ be $\rho$-regular antidominant, i.e. 
$(\lambda+\rho)(h_i) < 0$ for all $i \in I$. Then $\overline{\Ho^0}(X,\sh{R}_{*w}(\lambda))
\cong \RVerma(w\cdot \lambda,=)^{\vee}$ as $\mathfrak{g}$-module. 
\end{Thm}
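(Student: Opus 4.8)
The plan is to compute $N:=\overline{\Ho^0}(X,\sh{R}_{*w}(\lambda))$ from the two geometrically constructed short exact sequences \eqref{eq:seswithBwlambda-andRwlambda} and \eqref{eq:seswithBwlambdaandRwlambda} by taking cohomology, and then to rigidify the resulting $\mathfrak{g}$-module — which those sequences determine only up to an extension — into $\RVerma(w\cdot\lambda,=)^{\vee}$ with the algebraic input Proposition \autoref{Lemma:nonzerosubofRwdotlabmda=intersects} on the modules of Section \autoref{sec:gmodulesRVermalambdalpha(veesl2)}. First I would apply $\overline{\Ho^j}(X,\cdot)$ to both sequences via Corollary \autoref{Cor:lesforHol0R(lambda)}. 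For $\lambda$ $\rho$-regular antidominant, \cite{KT95}[Theorem 3.4.1] gives $\overline{\Ho^j}(X,\sh{B}_v(\lambda))=0$ for $j>0$ and $\overline{\Ho^0}(X,\sh{B}_v(\lambda))\cong\MVerma(v\cdot\lambda)^{\vee}$ ($v=w,\,s_iw$), while Theorem \autoref{Thm:Hojofs_*M} identifies $\overline{\Ho^j}(X,\widetilde{s_i}_*\sh{B}_v(\lambda))$ with the $\widetilde{s_i}$-twist of the same. Hence $\overline{\Ho^j}(X,\sh{R}_{*w}(\lambda))=0$ for $j>0$, and one obtains short exact sequences of $\mathfrak{g}$-modules
\begin{align*}
0\to(\MVerma(w\cdot\lambda)^{\vee})^{\widetilde{s_i}}\to N\to\MVerma(s_iw\cdot\lambda)^{\vee}\to 0,\qquad 0\to\MVerma(w\cdot\lambda)^{\vee}\to N\to(\MVerma(s_iw\cdot\lambda)^{\vee})^{\widetilde{s_i}}\to 0.
\end{align*}

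Next I would dualize. The functor $(\cdot)^{\vee}$ of \cite{KT95}[Section 1.2] is exact, is an involution on $\mathfrak{g}$-modules with finite-dimensional weight spaces, and commutes with $(\cdot)^{\widetilde{s_i}}$ because the lift $\dot s_i=e^{e_i}e^{-f_i}e^{e_i}$ is compatible with the Chevalley anti-involution (a direct verification, essentially on $\mathfrak{g}_i'$). Applying it to the two sequences exhibits $N^{\vee}$ with submodules $\cong\MVerma(s_iw\cdot\lambda)$ and $\cong\MVerma(s_iw\cdot\lambda)^{\widetilde{s_i}}$, with quotients $N^{\vee}/\MVerma(s_iw\cdot\lambda)\cong\MVerma(w\cdot\lambda)^{\widetilde{s_i}}$ and $N^{\vee}/\MVerma(s_iw\cdot\lambda)^{\widetilde{s_i}}\cong\MVerma(w\cdot\lambda)$; these are the injective $\mathfrak{g}$-maps promised in the introduction.

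I would then produce a cyclic vector for $N^{\vee}$. Lift the highest weight vector of the quotient $\MVerma(w\cdot\lambda)$ to $v\in N^{\vee}$ of weight $w\cdot\lambda$. Since $s_iw<w$ forces $w^{-1}\alpha_i\in\Phi^{<0}$, $\rho$-regular antidominance gives $\langle w(\lambda+\rho),h_i\rangle>0$; combined with the $\mathfrak{h}$-module structure of $N$ from Theorem \autoref{Thm:H0hmod}, a short weight count shows $N^{\vee}$ has no weight $w\cdot\lambda+\beta$ with $0\ne\beta\in\sum_{\gamma\in\Phi^{>0}\setminus\{\alpha_i\}}\Z_{\geq 0}\,\gamma$. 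Hence $\mathfrak{n}_iv=0$ and $e_iv$ lies in the one-dimensional top weight space of $\MVerma(s_iw\cdot\lambda)^{\widetilde{s_i}}\subseteq N^{\vee}$. If $e_iv=0$ then $v$ would be a highest weight vector and $N^{\vee}\cong\MVerma(w\cdot\lambda)\oplus\MVerma(s_iw\cdot\lambda)^{\widetilde{s_i}}$, excluded by the other sequence (its quotient $\MVerma(w\cdot\lambda)^{\widetilde{s_i}}$ is indecomposable, while $\Hom_{\mathfrak{g}}(\MVerma(s_iw\cdot\lambda),\MVerma(s_iw\cdot\lambda)^{\widetilde{s_i}})=0$ on weight grounds). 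So $e_iv\ne0$, the submodule $\MVerma(s_iw\cdot\lambda)^{\widetilde{s_i}}$ is contained in $\Uea\mathfrak{g}\,v$, and $v$ generates $N^{\vee}$. As $\mathfrak{n}_iv=0$, the $\mathfrak{p}_i$-module $\Uea\mathfrak{p}_i\,v$ factors through $\mathfrak{p}_i\twoheadrightarrow\mathfrak{g}_i$, and $f_ie_iv=0$ because $e_iv$ is, up to scalar, the $\widetilde{s_i}$-twisted highest weight vector of $\MVerma(s_iw\cdot\lambda)^{\widetilde{s_i}}$, hence killed by $f_i$. Thus $v$ satisfies the relations defining the $\mathfrak{p}_i$-module $\C_{w\cdot\lambda}\otimes_{\C}\RVerma^{\mathfrak{sl}_2}((w\cdot\lambda)(h_i),=)$, so by Frobenius reciprocity there is a surjective $\mathfrak{g}$-map $\RVerma(w\cdot\lambda,=)\twoheadrightarrow N^{\vee}$. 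Since $N$ and $\RVerma(w\cdot\lambda,=)$ have equal characters as $\mathfrak{h}$-modules (Theorem \autoref{Thm:H0hmod}(2) together with \eqref{eq:RVermalambdaalpha(vee)hmod}), this surjection is an isomorphism; equivalently, Proposition \autoref{Lemma:nonzerosubofRwdotlabmda=intersects}, which controls the nonzero submodules of $\RVerma(w\cdot\lambda,=)$, rules out a nonzero kernel. Hence $N^{\vee}\cong\RVerma(w\cdot\lambda,=)$, i.e. $\overline{\Ho^0}(X,\sh{R}_{*w}(\lambda))\cong\RVerma(w\cdot\lambda,=)^{\vee}$.

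The main obstacle is the passage from $\mathfrak{h}$-modules to $\mathfrak{g}$-modules in the last two steps: the two short exact sequences pin $N$ down only up to an extension class, so one genuinely needs both of them — one to produce the cyclic vector $v$, the other to exclude the split extension — together with the classification of relaxed $\mathfrak{sl}_2$-modules of \cite{Eic15} to recognise $\Uea\mathfrak{p}_i\,v$; this bookkeeping is exactly what Proposition \autoref{Lemma:nonzerosubofRwdotlabmda=intersects} is designed to supply.
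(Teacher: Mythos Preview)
Your argument is correct and reaches the same conclusion, but the route through the ``top level'' differs from the paper's.  Both proofs start identically: apply $\overline{\Ho^j}(X,\cdot)$ to the two short exact sequences \eqref{eq:seswithBwlambda-andRwlambda} and \eqref{eq:seswithBwlambdaandRwlambda}, invoke \cite{KT95}[Theorem 3.4.1] and Theorem~\ref{Thm:Hojofs_*M}, and dualize to obtain the two injections $\MVerma(s_iw\cdot\lambda)\hookrightarrow N^{\vee}$ and $\MVerma(s_iw\cdot\lambda)^{\widetilde{s_i}}\hookrightarrow N^{\vee}$.  Both finish with the character comparison from Theorem~\ref{Thm:H0hmod}(2) and \eqref{eq:RVermalambdaalpha(vee)hmod} (with Proposition~\ref{Lemma:nonzerosubofRwdotlabmda=intersects} as an alternative route to injectivity).

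The divergence is in how the $\mathfrak{g}$-map $\RVerma(w\cdot\lambda,=)\to N^{\vee}$ is produced.  The paper computes the entire top level $L=\bigoplus_{\mu\in\Z\alpha_i+w\cdot\lambda}(N^{\vee})_{\mu}$ as a $\mathfrak{g}_i$-module by a direct geometric calculation: restricting to $N(X_w)_l$ and $N(X_{s_iw})_l$, the actions of $e_i$ and $f_i$ on global sections are identified with the first-order differential operators $\partial_x$ and $\partial_z$ via the explicit formulas near \cite{KT95}[(3.6.10)], and one reads off exactly where each vanishes.  Your approach is purely algebraic: you lift a highest-weight vector from the quotient $\MVerma(w\cdot\lambda)$, establish $\mathfrak{n}_iv=0$ by a weight bound, and then obtain the crucial relation $e_iv\neq 0$ by a splitting contradiction that genuinely uses \emph{both} exact sequences (one to produce the candidate splitting, the other to rule it out via indecomposability of $\MVerma(w\cdot\lambda)^{\widetilde{s_i}}$).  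The relation $f_ie_iv=0$ then falls out of the submodule structure.

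What each buys: the paper's computation is self-contained and yields the full $\mathfrak{g}_i$-structure of $L$ in one stroke, but depends on the explicit differential-operator description from \cite{KT95}.  Your argument avoids that geometric input entirely, trading it for a more delicate extension-class argument; this makes the role of the \emph{pair} of short exact sequences more transparent, and explains why both are needed rather than just one.  A small caveat: your step ``$v$ satisfies the relations defining $\C_{w\cdot\lambda}\otimes_{\C}\RVerma^{\mathfrak{sl}_2}((w\cdot\lambda)(h_i),=)$'' presupposes that the generator of $\RVerma^{\mathfrak{sl}_2}(\Lambda,=)$ sits at weight $\Lambda$ with defining relation $fe\cdot 1=0$; this is correct, but you should state it explicitly (with reference to \cite{Eic15}) rather than leave it implicit.
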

We will prove this theorem in \autoref{ssec:pfofthm} after some preparations. 
The strategy is similar to the highest weight case, namely we adapt the arguments of
\cite{KT95} identifying the $\mathfrak{g}$-module $\overline{\Ho^0}(X,\sh{B}_w(\lambda))$
as the dual Verma module $\MVerma(w\cdot \lambda)^{\vee}$ (this statement itself will
also be an ingredient in our proof).  

\subsubsection{Algebraic statements about $\RVerma(\lambda,=)$}
\emph{In this section we require $\lambda(h_i) \geq 0$.}

\begin{Rem}\label{Rem:basisinRVerma=}
Let us fix a basis $(X_k)_{k \in \Z_{>0}}$ of root vectors
of $\mathfrak{n}^-_i$. 
$\RVerma(\lambda,=)$ is freely generated by the action of $\mathfrak{n}^-_i$
from the ``top level'' subspace $\C_{\lambda}\otimes \RVerma^{\mathfrak{sl}_2} (\lambda(h_i),=)$
of $\RVerma(\lambda,=)$. Thus it
has a basis $X^{n_1}_{k_1}\dots X^{n_N}_{k_N}v$, where $N \in \Z_{\geq 0}$, $n_j \in \Z_{>0}$ 
and $k_1 > \dots > k_N$ and $v$ runs through a basis of the ``top level''.
\end{Rem} 

We will refer to $X^{n_1}_{k_1}\dots X^{n_N}_{k_N} \in \Uea \mathfrak{n}^-_i$ as ordered monomial and abbreviate it as $X$. 

\begin{Lemma}\label{Lemma:Rlambda=hasnononzergiprimefinitevectors}
$\RVerma(\lambda,=)$ does not have nonzero $\mathfrak{g}_i^{\prime}$-finite vectors.
\end{Lemma}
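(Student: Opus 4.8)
The plan is to restrict $\RVerma(\lambda,=)$ to the subalgebra $\mathfrak{g}_i'\cong\mathfrak{sl}_2$, realize it on the PBW factorization $\Uea\mathfrak{g}=\Uea\mathfrak{n}^-_i\otimes_\C\Uea\mathfrak{p}_i$ as $\Uea\mathfrak{n}^-_i\otimes_\C W$ with $\mathfrak{g}_i'$ acting ``diagonally'' ($W$ being the top level), and then to deduce the claim from the corresponding property of $W$ recorded in \cite{Eic15}. The mechanism is a bounded-weight argument: a $\mathfrak{g}_i'$-finite vector would force $e_i$ to act nilpotently on the relevant components in $W$, contradicting its injectivity there.

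\emph{Step 1: the $\mathfrak{g}_i'$-module structure.} First I would note that $\mathfrak{g}_i'=\C e_i\oplus\C h_i\oplus\C f_i\subseteq\mathfrak{b}\oplus\C f_i=\mathfrak{p}_i$, and that $\ad(\mathfrak{g}_i')$ preserves $\mathfrak{n}^-_i=\bigoplus_{\beta\in\Phi^{>0}\setminus\{\alpha_i\}}\mathfrak{g}_{-\beta}$: for such $\beta$, if $\alpha_i-\beta$ is a root it is negative (otherwise $\alpha_i$ would be a sum of two positive roots) and $\neq-\alpha_i$ (since $2\alpha_i\notin\Phi$), and $-\alpha_i-\beta$, if a root, is likewise negative and $\neq-\alpha_i$; hence $\ad(e_i),\ad(f_i)$, and trivially $\ad(h_i)$, map $\mathfrak{g}_{-\beta}$ into $\mathfrak{n}^-_i$. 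Moving $x\in\mathfrak{p}_i$ past $X\in\Uea\mathfrak{n}^-_i$ via $xX=Xx+\ad(x)(X)$, the $\mathfrak{h}$-module isomorphism of Remark \autoref{Rem:basisinRVerma=}, $\RVerma(\lambda,=)\cong\Uea\mathfrak{n}^-_i\otimes_\C W$ with $W=\C_\lambda\otimes\RVerma^{\mathfrak{sl}_2}(\lambda(h_i),=)$, then upgrades to an isomorphism of $\mathfrak{g}_i'$-modules with $x\cdot(X\otimes w)=\ad(x)(X)\otimes w+X\otimes xw$. Since $\mathfrak{g}_i'$ acts on $W$ only through the second tensor factor (via $\mathfrak{g}_i'\hookrightarrow\mathfrak{g}_i$, cf.\ \eqref{eq:projfrompitogi}), one has $W\cong\RVerma^{\mathfrak{sl}_2}(\lambda(h_i),=)$ as a $\mathfrak{g}_i'$-module. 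Finally, as $\ad(e_i),\ad(f_i)$ are locally nilpotent derivations of $\Uea\mathfrak{g}$ and $h_i$ acts with integral eigenvalues, $\Uea\mathfrak{n}^-_i$ is a locally finite $\mathfrak{g}_i'$-module, i.e.\ a sum of finite-dimensional $\mathfrak{sl}_2$-submodules.

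\emph{Step 2: reduction and the bounded-weight argument.} From the defining relations of $\RVerma^{\mathfrak{sl}_2}(\lambda(h_i),=)$ in \cite{Eic15} (here $\lambda(h_i)\ge0$) one reads off that $e_i$ acts injectively on $W$. Now, supposing $u\in\RVerma(\lambda,=)$ were a nonzero $\mathfrak{g}_i'$-finite vector, I would pass, using semisimplicity of the finite-dimensional module $\Uea\mathfrak{g}_i'\cdot u$, to a nonzero highest weight vector and assume $e_iu=0$. Writing $u=\sum_kX_k\otimes w_k$ with finitely many $X_k\in\Uea\mathfrak{n}^-_i$, one has $u\in V_0\otimes_\C W$ where $V_0=\Uea\mathfrak{g}_i'\cdot\spanset_\C\{X_k\}\subseteq\Uea\mathfrak{n}^-_i$ is finite-dimensional by Step 1 and hence admits an $h_i$-weight basis $v_1,\dots,v_m$. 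Writing $u=\sum_jv_j\otimes w_j'$ with $w_j'\in W$ and $e_iv_j=\sum_ka_{jk}v_k$, where $a_{jk}\ne0$ only if $\wt(v_k)=\wt(v_j)+2$, the relation $e_iu=0$ forces $e_iw_j'=-\sum_ka_{kj}w_k'$ for all $j$, so $e_i^Nw_j'$ lies in the span of the $w_k'$ with $\wt(v_k)=\wt(v_j)-2N$. Since the $h_i$-weights of the finite-dimensional $V_0$ are bounded below, $e_i^Nw_j'=0$ for $N\gg0$, and injectivity of $e_i$ on $W$ gives $w_j'=0$ for all $j$, i.e.\ $u=0$ --- a contradiction.

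\emph{Main obstacle.} The only ingredient external to this argument is the injectivity of $e_i$ on $\RVerma^{\mathfrak{sl}_2}(\lambda(h_i),=)$ from \cite{Eic15}. Within the argument itself, the step deserving care is the ``diagonal'' description of the $\mathfrak{g}_i'$-action on the PBW factorization --- that is, the inclusion $[\mathfrak{g}_i',\mathfrak{n}^-_i]\subseteq\mathfrak{n}^-_i$ and the local finiteness of $\Uea\mathfrak{n}^-_i$ as a $\mathfrak{g}_i'$-module --- since it is exactly what lets one trade a hypothetical $\mathfrak{g}_i'$-finite vector for a weight-bounded situation in which $e_i$ is forced to act nilpotently on the $W$-components.
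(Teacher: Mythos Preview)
Your proof is correct and takes a genuinely different route from the paper's. The paper works with an explicit basis $\{X e_i^n v : n>0\}\cup\{X f_i^n v : n\ge 0\}$ of $\RVerma(\lambda,=)$ and tracks leading terms: commuting $e_i^m$ (resp.\ $f_i^m$) past an ordered monomial $X$ shows that the basis element $X e_i^m v$ (resp.\ $X f_i^m v$) survives in $e_i^m(Xv)$ (resp.\ $f_i^m(Xv)$), whence $\C[e_i]w$ or $\C[f_i]w$ is infinite-dimensional for any nonzero $w$. Your argument instead exploits the structural fact that $\Uea\mathfrak{n}_i^-$ is a locally finite $\ad(\mathfrak{g}_i')$-module, so that the PBW identification $\RVerma(\lambda,=)\cong\Uea\mathfrak{n}_i^-\otimes W$ carries a diagonal $\mathfrak{g}_i'$-action; a hypothetical highest weight vector then lives in $V_0\otimes W$ with $V_0$ finite-dimensional, and the weight-boundedness of $V_0$ forces nilpotency of $e_i$ on the $W$-components, contradicting the injectivity of $e_i$ on $\RVerma^{\mathfrak{sl}_2}(\lambda(h_i),=)$.

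Your approach is cleaner and more portable: it isolates exactly the property of the inducing module that is needed (injectivity of $e_i$, or symmetrically of $f_i$) and would apply verbatim to any module induced from $\mathfrak{p}_i$ whose top level has this property. The paper's argument, while more hands-on, avoids invoking local finiteness of $\ad(e_i),\ad(f_i)$ and semisimplicity of finite-dimensional $\mathfrak{sl}_2$-modules, relying instead only on the PBW basis and elementary commutator bookkeeping. Both proofs ultimately draw on the same external input from \cite{Eic15} about the structure of $\RVerma^{\mathfrak{sl}_2}(\lambda(h_i),=)$.
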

\begin{proof}
By construction of $\RVerma^{\mathfrak{sl}_2}(\lambda(h_i),=)$ in \cite{Eic15} there is a weight vector $v$ in the ``top level'' $\C_{\lambda}\otimes \RVerma^{\mathfrak{sl}_2}(\lambda(h_i),=)$
of $\RVerma(\lambda,=)$ such that the (repeated) action of $e_i$ and $f_i$ on $v$ spans the whole ``top level''.
In the argument below we will consider the collection of vectors 
\begin{align}\label{eq:basisinRVermafromgenv}
X e_i^n v\;,\; n > 0\;,\;  X f_i^nv\;,\; n \geq 0\;,
\end{align}
where $X$ runs through the ordered monomials. By Remark \autoref{Rem:basisinRVerma=} this
is a basis of $\RVerma(\lambda,=)$. 
We claim that by letting
$e_i^m$ or $f_i^m$ act on the vector $w=Xv$, where $X$ is an ordered monomial and bringing it to the right, noting $[e_i,\mathfrak{n}^-_i] \subseteq \mathfrak{n}^-_i$
and $[f_i,\mathfrak{n}^-_i] \subseteq \mathfrak{n}^-_i$, we find that the subspace $\C[e_i]w$ and
$\C[f_i]w$ is infinite dimensional. Indeed, 
\begin{align*}
e_i Xv
= X e_i v +\text{``brackets of $e_i$ with $X_{k_j}$''}\;. 
\end{align*}
But these brackets are in $\mathfrak{n}_i^-$ and hence are a linear combination of $X^{\prime}v$, where $X^{\prime}$ is an ordered monomial
(to make the monomials ordered one has to form brackets in $\mathfrak{n}^-_i$).
By induction on $m$ it follows that
the element $Xe_i^m v$ of the basis \eqref{eq:basisinRVermafromgenv} occurs in $e_i^m w$ 
with nonzero coefficient. 
Hence $\C[e_i]w$ is an infinite dimensional subspace. 
The argument for $\C[f_i] w$ is completely similar. 
Given now a general vector $w \in \RVerma(\lambda,=)$, we can expand it in the basis \eqref{eq:basisinRVermafromgenv} and hence write it uniquely in the form
\begin{align*}
w = \sum_{l=1}^{L} A_l f_i^{a_l} v
+ \sum_{p=1}^{P} B_pe_i^{b_p} v\;,
\end{align*}
where $A_l$ and $B_p$ are nonzero linear combinations of ordered monomials and $0 \leq a_1 < a_2 < \dots$ and $0 < b_1 < b_2 < \dots$. 
If $L \geq 1$, then the above argument shows that each basis element of \eqref{eq:basisinRVermafromgenv} occurring in $A_L f_i^{a_L+m}v$
with nonzero coefficient will also occur in $f_i^m w$ with nonzero coefficient. 
Hence $\C[f_i]w$ is infinite dimensional. 
Similarly we show in case $P \geq 1$ that $\C[e_i]w$ is infinite dimensional. 
Thus $w$ is not $\mathfrak{g}_i^{\prime}$-finite unless $L=P=0$, in which case $w=0$. 
\end{proof}

We note that from Remark \autoref{Rem:basisinRVerma=}
it follows that $\MVerma(s_i\cdot \lambda)\oplus \MVerma(s_i\cdot \lambda)^{\widetilde{s_i}}$ can
be considered as a submodule of $\RVerma(\lambda,=)$ in a unique way.
The ``extremal'' weight of $\MVerma(s_i\cdot\lambda)^{\widetilde{s_i}}$ is
$s_i(s_i\cdot \lambda)=\lambda+\alpha_i$.
The following proposition can be regarded as 
a version of \cite{KT95}[Proposition 1.2.2(ii)],
where $\MVerma(\lambda)$ is replaced by $\RVerma(\lambda,=)$ 
and the submodule $\MVerma(s_i \cdot \lambda)$ by 
the submodule $\MVerma(s_i\cdot \lambda)\oplus \MVerma(s_i\cdot \lambda)^{\widetilde{s_i}}$. 

\begin{Prop}\label{Lemma:nonzerosubofRwdotlabmda=intersects}
Any nonzero submodule of $\RVerma(\lambda,=)$ intersects
$\MVerma(s_i\cdot \lambda)\oplus \MVerma(s_i \cdot \lambda)^{\widetilde{s_i}}$ nontrivially. 
\end{Prop}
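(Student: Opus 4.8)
The plan is to argue by contradiction. Write $k=\lambda(h_i)\in\Z_{\geq 0}$, set $R=\RVerma(\lambda,=)$, and let $M=\MVerma(s_i\cdot\lambda)\oplus\MVerma(s_i\cdot\lambda)^{\widetilde{s_i}}\subseteq R$ be the submodule singled out just before the statement. Suppose $N\subseteq R$ is a nonzero submodule with $N\cap M=0$; then the composite $N\hookrightarrow R\twoheadrightarrow R/M$ is injective. The key claim will be that $R/M$ is a \emph{locally $\mathfrak{g}_i^{\prime}$-finite} $\mathfrak{g}_i^{\prime}$-module. Granting this, its submodule $N$ is locally $\mathfrak{g}_i^{\prime}$-finite as well, so for any nonzero $n\in N$ the subspace $\Uea\mathfrak{g}_i^{\prime}\cdot n$ is finite dimensional; since this subspace is literally the same whether computed in $N$ or in $R$, the module $R=\RVerma(\lambda,=)$ would possess a nonzero $\mathfrak{g}_i^{\prime}$-finite vector, contradicting Lemma~\autoref{Lemma:Rlambda=hasnononzergiprimefinitevectors}. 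Thus the whole proof reduces to the local $\mathfrak{g}_i^{\prime}$-finiteness of $R/M$.

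To establish that, first identify $R/M$. By the PBW theorem $R\cong\Uea\mathfrak{n}^-_i\otimes_{\C}T$ as $\mathfrak{h}\ltimes\mathfrak{n}^-_i$-module, where $T=\C_\lambda\otimes\RVerma^{\mathfrak{sl}_2}(k,=)$ is the top level (cf.\ Remark~\autoref{Rem:basisinRVerma=} and \eqref{eq:RVermalambdaalpha(vee)hmod}); moreover the two generators of $M$, of weights $s_i\cdot\lambda=\lambda-(k+1)\alpha_i$ and $\lambda+\alpha_i$, lie in $T$, since $\alpha_i$ is not a nontrivial nonnegative integral combination of positive roots distinct from $\alpha_i$, so those two weight spaces of $R$ are one dimensional and of $\Uea\mathfrak{n}^-_i$-degree zero. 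Hence $M$ is $\Uea\mathfrak{n}^-_i$-free on $M\cap T$, and, by the structure of $\RVerma^{\mathfrak{sl}_2}(k,=)$ from \cite{Eic15} (its two Verma-type submodules exhaust the $h_i$-weights $k+2n$, $n\geq 1$, and $k-2n$, $n\geq k+1$), the quotient $\bar T:=T/(M\cap T)$ is $\C_\lambda\otimes F$ with $F$ the $(k+1)$-dimensional simple $\mathfrak{sl}_2$-module. Therefore $R/M\cong\Uea\mathfrak{n}^-_i\otimes_{\C}(\C_\lambda\otimes F)$, and in fact $R/M$ is the parabolic Verma module $\Uea\mathfrak{g}\otimes_{\Uea\mathfrak{p}_i}(\C_\lambda\otimes F)$ of Section~\autoref{ssec:commentsonlit}, with $\mathfrak{p}_i$ acting through \eqref{eq:projfrompitogi}; equivalently this short exact sequence for $R/M$ is obtained by applying the exact functor $\Uea\mathfrak{g}\otimes_{\Uea\mathfrak{p}_i}(\C_\lambda\otimes-)$ to a sequence $0\to K\to\RVerma^{\mathfrak{sl}_2}(k,=)\to F\to 0$ of $\mathfrak{g}_i^{\prime}$-modules, $K$ being a direct sum of two Verma-type modules.

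It then remains to show that a parabolic Verma module $V=\Uea\mathfrak{g}\otimes_{\Uea\mathfrak{p}_i}(\C_\lambda\otimes F)$ with $F$ finite dimensional is locally $\mathfrak{g}_i^{\prime}$-finite. It is generated over $\Uea\mathfrak{g}$ by the finite-dimensional subspace $\C_\lambda\otimes F$, on which $e_i$ and $f_i$ act nilpotently; since $\ad e_i$ and $\ad f_i$ are locally nilpotent on $\mathfrak{g}$ \cite{Kac90}, an induction on the length of monomials $x_1\cdots x_r v$ — pushing powers of $e_i$ (resp.\ $f_i$) to the right via $e_i^{\,n}x=\sum_j\binom{n}{j}(\ad e_i)^j(x)\,e_i^{\,n-j}$ — shows that $e_i$ and $f_i$ act locally nilpotently on all of $V$. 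Together with the semisimplicity of the $h_i$-action, $V$ becomes an integrable $\mathfrak{g}_i^{\prime}$-module, hence a direct sum of finite-dimensional $\mathfrak{g}_i^{\prime}$-modules \cite{Kac90}, in particular locally $\mathfrak{g}_i^{\prime}$-finite. Applying this to $V=R/M$ completes the argument.

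The step I expect to be the main obstacle is the identification of $R/M$: one must check carefully that $M$ meets the top level $T$ exactly in the two Verma-type submodules of $\RVerma^{\mathfrak{sl}_2}(k,=)$, so that $\bar T$ is genuinely finite dimensional — this is where the precise embedding structure of $\RVerma^{\mathfrak{sl}_2}(k,=)$ in the ``$=$'' case from \cite{Eic15} is indispensable. Once $R/M$ is recognized as a parabolic Verma module, the local $\mathfrak{g}_i^{\prime}$-finiteness and the final appeal to Lemma~\autoref{Lemma:Rlambda=hasnononzergiprimefinitevectors} are routine.
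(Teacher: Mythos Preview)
Your proof is correct and follows the same overall strategy as the paper: both reduce to showing that the quotient $R/M$ is locally $\mathfrak{g}_i^{\prime}$-finite and then invoke Lemma~\ref{Lemma:Rlambda=hasnononzergiprimefinitevectors}. The difference lies only in how local $\mathfrak{g}_i^{\prime}$-finiteness of $R/M$ is established. You identify $R/M$ explicitly as the parabolic Verma module $\Uea\mathfrak{g}\otimes_{\Uea\mathfrak{p}_i}(\C_\lambda\otimes F)$ (using exactness of induction and the structure of $\RVerma^{\mathfrak{sl}_2}(k,=)$) and then run the standard integrability argument that $e_i,f_i$ act locally nilpotently. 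The paper instead observes only that the image $V$ of the top level in $R/M$ is finite dimensional, notes that $\wt(R/M)\subseteq\wt(\Uea\mathfrak{n}_i^-)+\wt(V)$ while $\wt((\Uea\mathfrak{g}_i)w)\subseteq\wt(w)+\Z\alpha_i$, and appeals to \cite{Kas90}[Lemma 5.1.6] to conclude that this intersection is finite, hence $(\Uea\mathfrak{g}_i)w$ is finite dimensional. Your route is more structural and yields the bonus identification of $R/M$ as a parabolic Verma module; the paper's route is shorter and avoids checking that $M$ is exactly the image of the induced submodule, trading that for a citation to Kashiwara's weight-finiteness lemma.
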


\begin{proof}
We consider the quotient $\mathfrak{g}$-module $W= \RVerma(\lambda,=)/\MVerma(s_i\cdot \lambda)\oplus \MVerma(s_i\cdot \lambda)^{\widetilde{s_i}}$.
Let $V$ be the image of the ``top level'' of $\RVerma(\lambda,=)$ in $W$. It is a finite dimensional $\mathfrak{g}_i$-module. 
Then $W$ is spanned by the vectors $w= X v$, where $X$ runs through the ordered monomials and $v$ through a basis of weight vectors of $V$. The weights of $W$ are in particular contained in
$\wt(\Uea\mathfrak{n}_i^-)+\wt(V)$. On the other hand we have $\wt((\Uea\mathfrak{g_i})w) \subseteq \Z\alpha_i + \wt(w)$.  
Thus it follows from \cite{Kas90}[Lemma 5.1.6] that $\wt((\Uea\mathfrak{g_i})w)$ is a finite set. But then it is easy to conclude that $(\Uea\mathfrak{g}_i)w$ is finite dimensional. 
It follows that any (not necessarily weight) vector in $W$ is $\mathfrak{g}_i$-finite. Now the statement follows from Lemma \autoref{Lemma:Rlambda=hasnononzergiprimefinitevectors}. 
\end{proof}

\subsubsection{Proof of the theorem}\label{ssec:pfofthm}
We recall that the $\mathfrak{h}$-module structure of $\overline{\Ho^0}(X,\sh{R}_{*w}(\lambda))$
is known from Theorem \autoref{Thm:H0hmod}(2) and coincides with the one of $\overline{\Ho^0}(X,\sh{R}_{*w}(\lambda))^{\vee}$. This will be used without mention multiple times in the subsequent arguments. We first note that we have a long exact sequence for $\Ho^j\left(X^{\leq w}_l,\cdot\right)$
associated to the short exact sequence \eqref{eq:seswithBwlambda-landRwlambdal}. 
It follows that we have a short exact sequence 
\begin{align*}
0 \rightarrow \overline{\Ho^0}(X,\widetilde{s_i}_*\sh{B}_w(\lambda))
\rightarrow \overline{\Ho^0}(X,\sh{R}_{*w}(\lambda)) \rightarrow \overline{\Ho^0}(X,\sh{B}_{s_iw}(\lambda))
\rightarrow 0 
\end{align*}
of $\mathfrak{g}$-modules. 
Here we applied Theorem \autoref{Thm:Hojofs_*M}, which shows $\overline{\Ho^1}(X,\widetilde{s_i}_*\sh{B}_w(\lambda))
\cong \overline{\Ho^1}(X,\sh{B}_w(\lambda))^{\widetilde{s_i}}=0$. 
(Corollary \autoref{Cor:lesforHol0R(lambda)} on the other hand is not needed.)
Using the identification $\overline{\Ho^0}(X,\sh{B}_{s_iw}(\lambda))
\cong \MVerma(s_iw \cdot \lambda)^{\vee}$ proven in \cite{KT95}[Theorem 3.4.1(ii)]
for $\lambda$ $\rho$-regular antidominant and applying $(\cdot)^{\vee}$ to the above surjection $\overline{\Ho^0}(X,\sh{R}_{*w}(\lambda)) \rightarrow \overline{\Ho^0}(X,\sh{B}_{s_iw}(\lambda))$
we obtain an injection $\psi: \MVerma(s_iw\cdot \lambda) \hookrightarrow 
\overline{\Ho^0}(X,\sh{R}_{*w}(\lambda))^{\vee}$. Similarly, we obtain from the short
exact sequence \eqref{eq:seswithBwlambdaandRwlambda} an injection
$\psi^{\widetilde{s_i}}: \MVerma(s_iw\cdot \lambda)^{\widetilde{s_i}} \hookrightarrow \overline{\Ho^0}(X,\sh{R}_{*w}(\lambda))^{\vee}$. 
\begin{Rem}\label{Rem:uniquenessofpsiandpsisi}
The morphisms $\psi$ and $\psi^{\widetilde{s_i}}$ are the unique, up to scalar, morphisms with given source and target by the universal property of the Verma
module and the fact that the corresponding weight spaces of $\overline{\Ho^0}(X,\sh{R}_{*w}(\lambda))^{\vee}$
are one dimensional. 
\end{Rem}
Let us now describe the ``top level'' of $\overline{\Ho^0}(X,\sh{R}_{*w}(\lambda))^{\vee}$,
i.e. the subspace 
\begin{align*}
L=\bigoplus_{\mu \in \Z\alpha_i+w\cdot\lambda}\overline{\Ho^0}(X,\sh{R}_{*w}(\lambda))^{\vee}_{\mu}\;. 
\end{align*} 
This is a $\mathfrak{g}_i$-submodule of $\overline{\Ho^0}(X,\sh{R}_{*w}(\lambda))^{\vee}$. 
\begin{Lemma}
$L \cong \C_{w\cdot \lambda}\otimes \RVerma^{\mathfrak{sl}_2}((w\cdot\lambda)(h_i),=)$
as $\mathfrak{g}_i$-module, where on the rhs $\mathfrak{g}_i$ acts according to the direct sum decomposition
of \eqref{eq:projfrompitogi} . 
\end{Lemma}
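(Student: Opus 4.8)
The plan is to reduce the statement to a question about $\mathfrak{sl}_2$-modules and then pin it down using the two inclusions $\psi,\psi^{\widetilde{s_i}}$ together with an indecomposability argument. Write $M=\overline{\Ho^0}(X,\sh{R}_{*w}(\lambda))^{\vee}$ and $m=(w\cdot\lambda)(h_i)$; since $\lambda$ is $\rho$-regular antidominant and $s_iw<w$ (so that $w^{-1}\alpha_i<0$), one checks $m\ge 0$, so $\RVerma^{\mathfrak{sl}_2}(m,=)$ is defined and the algebraic results about it apply. First I would record the $\mathfrak{h}$-module structure of $L$: by Theorem \autoref{Thm:H0hmod}(2), and since $(\cdot)^{\vee}$ preserves $\mathfrak{h}$-weights, we know $M$ as an $\mathfrak{h}$-module, and because every nonzero weight of $\Sym\mathfrak{n}^-_i$ involves a positive root other than $\alpha_i$ and so lies outside $\Z\alpha_i$, the weights of $M$ lying in $\Z\alpha_i+w\cdot\lambda$ are exactly $w\cdot\lambda+n\alpha_i$ $(n\in\Z)$, each of multiplicity one. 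Thus $L\cong\C[z,z^{-1}]\otimes_{\C}\C_{w\cdot\lambda}$ ($z$ of weight $\alpha_i$) as an $\mathfrak{h}$-module, matching the $\mathfrak{h}$-module of $\C_{w\cdot\lambda}\otimes\RVerma^{\mathfrak{sl}_2}(m,=)$ by \eqref{eq:toplevelRlambdaalphahmod}. Since $\mathfrak{g}_i$ has $\mathfrak{h}$-weights in $\Z\alpha_i$ it preserves $L$, and $\mathfrak{n}_i$ annihilates $L$ because $M$ carries no weight $\mu+\gamma$ with $\mu\in\Z\alpha_i+w\cdot\lambda$ and $\gamma\in\Phi^{>0}\setminus\{\alpha_i\}$; so, writing $\mathfrak{g}_i=\{h:\alpha_i(h)=0\}\oplus\mathfrak{g}_i^{\prime}$ as in \eqref{eq:projfrompitogi}, we get $L\cong\C_{w\cdot\lambda}\otimes L'$ (the tensor understood as in the statement of the Lemma) with $L'$ a $\mathfrak{g}_i^{\prime}$-module (and $\mathfrak{g}_i^{\prime}\cong\mathfrak{sl}_2$) that is a weight module with weight set $m+2\Z$ and one-dimensional weight spaces, and the task becomes to show $L'\cong\RVerma^{\mathfrak{sl}_2}(m,=)$.

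Next I would locate the two ``ends'' of $L'$. Using $s_iw\cdot\lambda=w\cdot\lambda-(m+1)\alpha_i$ and that the weights of a Verma module (resp.\ its $\widetilde{s_i}$-twist) lying in the coset $\Z\alpha_i+w\cdot\lambda$ form, each with multiplicity one, a highest (resp.\ lowest) weight $\mathfrak{sl}_2$-submodule, intersecting $\psi\colon\MVerma(s_iw\cdot\lambda)\hookrightarrow M$ and $\psi^{\widetilde{s_i}}\colon\MVerma(s_iw\cdot\lambda)^{\widetilde{s_i}}\hookrightarrow M$ with $L$ exhibits inside $L'$ a submodule $A$ isomorphic to the highest weight $\mathfrak{sl}_2$-Verma module of highest weight $-m-2$, concentrated in $h_i$-weights $\le -m-2$, and a submodule $B$ isomorphic to its $\widetilde{s_i}$-twist, concentrated in $h_i$-weights $\ge m+2$. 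These have disjoint weight supports, and $L'/(A\oplus B)$ is $(m+1)$-dimensional with weights $\{-m,-m+2,\dots,m\}$ of multiplicity one; being generated by its (highest) weight-$m$ vector and finite dimensional, it is the $(m+1)$-dimensional simple $\mathfrak{sl}_2$-module. Now suppose $L'$ is indecomposable. As $A$ and $B$ and $L'/(A\oplus B)$ are irreducible, the only consecutive-weight raising/lowering maps of $L'$ that could vanish are $f_i$ on $L'_{-m}$ and $e_i$ on $L'_m$, and either vanishing would split off $A$ or $B$; so for a generator $u$ of $L'_m$ we obtain $e_iu\ne0$, hence $f_ie_iu=0$ (as $e_iu$ lies in the bottom line of $B$), and the nonvanishing of all the intermediate maps gives $\Uea\mathfrak{g}_i^{\prime}\cdot u=L'$. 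Since $u$ therefore satisfies the defining relations of the generator of $\RVerma^{\mathfrak{sl}_2}(m,=)$ of \cite{Eic15}, there is a $\mathfrak{g}_i^{\prime}$-map $\RVerma^{\mathfrak{sl}_2}(m,=)\to L'$ carrying the generator to $u$; it is surjective because $L'$ is cyclic on $u$, and an isomorphism because both sides have the same formal character. This gives $L\cong\C_{w\cdot\lambda}\otimes\RVerma^{\mathfrak{sl}_2}(m,=)$ as a $\mathfrak{g}_i$-module.

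The remaining point — and the one I expect to be the main obstacle — is the indecomposability of $L'$, equivalently of $M$. Since $\mathfrak{n}_i$ kills $L$, $\mathfrak{p}_i$ preserves $L$, and $M$ is $\Uea\mathfrak{n}^-_i$-free on $L$ (from the $\mathfrak{h}$-structure found above), a decomposition of the $\mathfrak{g}_i$-module $L$ would induce a $\mathfrak{g}$-decomposition of $M$; moreover a summand of $L'$ equal to $A$ (resp.\ $B$) would, through the short exact sequences \eqref{eq:seswithBwlambdaandRwlambda} and \eqref{eq:seswithBwlambda-andRwlambda}, present $M$ as $\MVerma(s_iw\cdot\lambda)\oplus\MVerma(w\cdot\lambda)^{\widetilde{s_i}}$ (resp.\ $\MVerma(w\cdot\lambda)\oplus\MVerma(s_iw\cdot\lambda)^{\widetilde{s_i}}$), each of which has two distinct simple quotients. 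So it is enough that $M$ have a unique simple quotient, i.e.\ that $\overline{\Ho^0}(X,\sh{R}_{*w}(\lambda))$ have a unique simple submodule. I would derive the latter from the facts that $\sh{R}_{*w}(\lambda)$ has $\sh{L}_w(\lambda)$ as its unique simple sub-$\sh{D}$-module (Remark \autoref{Rem:LwlambdaequalsR!*wlambda}), that $\overline{\Ho^0}(X,\cdot)$ is left exact on $\Hol_0^{\RVerma}(\lambda)$ (Corollary \autoref{Cor:lesforHol0R(lambda)}) with $\overline{\Ho^0}(X,\sh{L}_w(\lambda))$ simple and nonzero for $\lambda$ $\rho$-regular antidominant (\cite{KT95}), and a descending induction on $\length(w)$ bounding the composition factors of $\overline{\Ho^0}(X,\sh{R}_{*w}(\lambda)/\sh{L}_w(\lambda))$, whose support is the proper relaxed admissible closed subset $\overline{X_w}\setminus(X_w\cap s_iX_w)$.
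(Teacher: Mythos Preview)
Your approach is genuinely different from the paper's and, as you anticipated, the indecomposability step is where it breaks down.

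The paper does not argue abstractly with $\psi$, $\psi^{\widetilde{s_i}}$ and indecomposability at all. Instead it computes the $e_i$- and $f_i$-actions on the top level \emph{explicitly}. Using \eqref{eq:R*wrestrtoNXswisBw} one realizes $L^{\prime}=\bigoplus_{\mu\in\Z\alpha_i+w\cdot\lambda}\overline{\Ho^0}(X,\sh{R}_{*w}(\lambda))_{\mu}$ inside $\Ho^0(N(X_{s_iw})_l,\sh{B}_w(\lambda)_l)$, where (following \cite{KT95}[near (3.6.10)]) $f_i$ acts as the first order operator $\partial_z$ in the coordinate $z$ of \eqref{eq:transfunction}; this shows $f_i$ kills exactly the weight $s_iw\cdot\lambda+\alpha_i$. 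Symmetrically, realizing $L^{\prime}$ inside $\Ho^0(N(X_w)_l,\sh{B}_w(\lambda)_l)$, the action of $e_i$ is $\partial_x$ in the coordinate $x$ of $U(\alpha_i)$ and kills exactly the weight $w\cdot\lambda$. Passing to $(\cdot)^{\vee}$ then identifies $L$ with $\C_{w\cdot\lambda}\otimes\RVerma^{\mathfrak{sl}_2}((w\cdot\lambda)(h_i),=)$ directly.

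The gap in your argument is the sentence ``$M$ is $\Uea\mathfrak{n}^-_i$-free on $L$ (from the $\mathfrak{h}$-structure found above)''. Theorem \autoref{Thm:H0hmod}(2) gives only the $\mathfrak{h}$-character; it says nothing about the $\mathfrak{n}^-_i$-module structure of $M=\overline{\Ho^0}(X,\sh{R}_{*w}(\lambda))^{\vee}$. Without that freeness, a $\mathfrak{g}_i$-decomposition $L=L_1\oplus L_2$ does not a priori propagate to a $\mathfrak{g}$-decomposition of $M$: you would need to know $M\cong\Uea\mathfrak{g}\otimes_{\Uea\mathfrak{p}_i}L$, which is precisely the content of Theorem \autoref{Thm:Ho0R*wlambdaasgmod} that this Lemma is meant to feed into. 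So the step is circular. Your fallback route via ``$M$ has a unique simple quotient'' is also incomplete: even granting that $\sh{L}_w(\lambda)$ is the unique simple sub-$\sh{D}$-module of $\sh{R}_{*w}(\lambda)$ and that $\overline{\Ho^0}(X,\sh{L}_w(\lambda))$ is simple, left exactness of $\overline{\Ho^0}$ and a bound on composition factors of the quotient do not by themselves force $\overline{\Ho^0}(X,\sh{R}_{*w}(\lambda))$ to have a \emph{unique} simple submodule; that would require control of extensions, not just of multiplicities. The paper's direct differential-operator computation avoids all of this.
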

\begin{proof}
We can consider $L^{\prime}=\bigoplus_{\mu \in \Z\alpha_i+w\cdot\lambda}\overline{\Ho^0}(X,\sh{R}_{*w}(\lambda))_{\mu}$
as a subspace of $\Ho^0(N(X_{s_iw})_l,\sh{R}_{*w}(\lambda)_l)$.
We recall \eqref{eq:R*wrestrtoNXswisBw}. There is an explicit description of the action of $f_i$ by
the first order differential operator $\partial_z$ on $L^{\prime} \subseteq \Ho^0(N(X_{s_iw})_l,\sh{B}_w(\lambda)_l)$, where $z$ is the affine coordinate appearing in \eqref{eq:transfunction}.
Indeed, such a description is given in \cite{KT95}[near (3.6.10)] for $\sh{M}_w(\lambda)_l$ 
instead of $\sh{B}_w(\lambda)_l$ and renders to the case of $\sh{B}_w(\lambda)_l$. It shows
\begin{align*}
f_i \overline{\Ho^0}(X,\sh{R}_{*w}(\lambda))_{s_i w\cdot \lambda + \alpha_i}=0\qquad f_i \overline{\Ho^0}(X,\sh{R}_{*w}(\lambda))_{\mu} \neq 0\quad \mu \in (\Z\alpha_i + w\cdot\lambda)
\setminus\{s_iw\cdot \lambda+\alpha_i\}\;. 
\end{align*}
Similarly, $\sh{R}_{*w}(\lambda)_l \vert N(X_w)_l \cong \sh{B}_w(\lambda)_l \vert N(X_w)_l$
and we can consider $L^{\prime}$ as a subspace of $\Ho^0(N(X_w)_l,\sh{R}_{*w}(\lambda)_l)$.
The action of $e_i$ is given by a $\C$-linear map $\Ho^0(N(X_w)_{l+1},\sh{B}_w(\lambda)_{l+1}) \rightarrow
\Ho^0(N(X_w)_l,\sh{B}_w(\lambda)_l)$, which on $L^{\prime}$ can be described as the first order differential operator $\partial_x$, where $x$ is the affine coordinate on $U(\alpha_i)$, see \cite{KT95}[p. 50]. 
This shows
\begin{align*}
e_i \overline{\Ho^0}(X,\sh{R}_{*w}(\lambda))_{w\cdot \lambda} = 0\qquad e_i \overline{\Ho^0}(X,\sh{R}_{*w}(\lambda))_{\mu} \neq 0\quad \mu \in (\Z\alpha_i+w\cdot\lambda)
\setminus\{ w\cdot \lambda\}\;. 
\end{align*}
The statement then follows by applying the definition of $(\cdot)^{\vee}$. 
\end{proof}

The isomorphism of this lemma induces an injection of $\mathfrak{p}_i$-modules 
\begin{align*}
\C_{w\cdot \lambda}\otimes \RVerma^{\mathfrak{sl}_2}((w\cdot\lambda)(h_i),=)
\hookrightarrow \overline{\Ho^0}(X,\sh{R}_{*w}(\lambda))^{\vee}\;,
\end{align*} 
where $\mathfrak{p}_i$ acts via \eqref{eq:projfrompitogi} on the source. 
By Definition \autoref{Def:RVermalambdalapha} of $\RVerma(w\cdot \lambda,=)$ we obtain a $\mathfrak{g}$-map
$\phi: \RVerma(w\cdot \lambda,=) \rightarrow \overline{\Ho^0}(X,\sh{R}_{*w}(\lambda))^{\vee}$. 
The restrictions $\phi \vert \MVerma(s_iw \cdot \lambda)$ and $\phi \vert \MVerma(s_iw\cdot \lambda)^{\widetilde{s_i}}$ are both nonzero by construction and hence by
Remark \autoref{Rem:uniquenessofpsiandpsisi} a nonzero multiple of $\psi$ and $\psi^{\widetilde{s_i}}$,
respectively. Hence $\ker \phi$ does not intersect $\MVerma(s_iw\cdot \lambda)\oplus \MVerma(s_iw\cdot \lambda)^{\widetilde{s_i}}$ and we infer $\ker \phi = 0$ by Proposition \autoref{Lemma:nonzerosubofRwdotlabmda=intersects}. Thus $\phi$ is an injection and hence an isomorphism. This concludes the proof of Theorem \autoref{Thm:Ho0R*wlambdaasgmod}.

\bibliographystyle{alpha}
\bibliography{references}

\begin{thebibliography}{FSST98}

\bibitem[BB81]{BB81}
A.~Beilinson and J.~Bernstein.
\newblock Localisation de $\mathfrak{g}$-modules.
\newblock {\em C. R. Acad. Sci. Paris S\'er. I Math.}, 292(1):15--18, 1981.

\bibitem[BD91]{BD91}
A.~Beilinson and V.~Drinfeld.
\newblock {\em Quantization of {Hitchin}'s integrable system and {Hecke}
  eigensheaves}.
\newblock 1991.

\bibitem[Ber]{Ber}
J.~Bernstein.
\newblock Algebraic theory of {D}-modules.
\newblock
  \href{http://www.math.uchicago.edu/~mitya/langlands/Bernstein/Bernstein-dmod.ps}{available
  at \nolinkurl{http://www.math.uchicago.edu/~mitya/langlands.html}}.
\newblock unpublished.

\bibitem[BGG76]{BGG76}
J.~Bernstein, I.~M. Gelfand, and S.~I. Gelfand.
\newblock On some category of $\mathfrak{g}$-modules.
\newblock {\em {Functional Analysis and its Applications}}, 10(2):1--8, 1976.

\bibitem[BK81]{BK81}
J.L. Brylinski and M.~Kashiwara.
\newblock {Kazhdan-Lusztig} {Conjecture} and {Holonomic} {Systems}.
\newblock {\em Invent. math.}, 64:387--410, 1981.

\bibitem[Eic15]{Eic15}
C.~Eicher.
\newblock Cohomology of twisted $\mathcal{D}$-modules on $\mathbb{P}^1$
  obtained as extensions from $\mathbb{C}^{\times}$.
\newblock arXiv:1509.05299 [math.RT], 2015.

\bibitem[Eic16]{Eic16}
C.~Eicher.
\newblock {\em Relaxed Highest Weight Modules from $\mathcal{D}$-Modules on the
  Kashiwara Flag Scheme}.
\newblock PhD thesis, ETH Zurich, 2016.
\newblock
  \href{https://doi.org/10.3929/ethz-a-010782385}{\nolinkurl{https://doi.org/10.3929/ethz-a-010782385}}.

\bibitem[FG09]{FG09}
E.~Frenkel and D.~Gaitsgory.
\newblock Localization of $\mathfrak{g}$-modules on the affine {Grassmannian}.
\newblock {\em Annals of Mathematics}, 170(3):1339--1381, 2009.

\bibitem[Fje11]{Fje11}
J.~Fjelstad.
\newblock On duality and extended chiral symmetry in the {SL}(2,$\mathbb{R}$)
  {WZW} model.
\newblock {\em Journal of Physics A}, 44, 2011.

\bibitem[Fre07]{Fre07}
E.~Frenkel.
\newblock {\em Langlands Correspondence for Loop Groups}, volume 103 of {\em
  Cambridge studies in advanced mathematics}.
\newblock Cambridge University Press, 2007.

\bibitem[FSST98]{FSST98}
B.L. Feigin, A.M. Semikhatov, V.A. Sirota, and I.Yu. Tipunin.
\newblock Resolutions and characters of irreducible representations of the
  {N=2} superconformal algebra.
\newblock {\em Nuclear Physics B}, 536(3):617--656, 1998.

\bibitem[FST98]{FST98}
B.L. Feigin, A.M. Semikhatov, and I.~Yu. Tipunin.
\newblock Equivalence between chain categories of representations of affine
  sl(2) and {$N = 2$} superconformal algebras.
\newblock {\em J. Math. Phys.}, 39:3865--3905, 1998.

\bibitem[Gab01]{Gab01}
M.R. Gaberdiel.
\newblock Fusion rules and logarithmic representations of a {WZW} model at
  rational level.
\newblock {\em Nucl. Phys. B.}, 618, 2001.

\bibitem[Gai05]{Gai05}
D.~Gaitsgory.
\newblock Geometric {R}epresentation {T}heory, {F}all 2005, 2005.
\newblock Lecture notes.

\bibitem[Hum08]{Hum08}
J.E. Humphreys.
\newblock {\em {Representations of Semisimple Lie algebras in the BGG Category
  $\mathcal{O}$}}.
\newblock AMS, 2008.

\bibitem[Kac90]{Kac90}
V.~Kac.
\newblock {\em Infinite dimensional {Lie} algebras}.
\newblock Cambridge University Press, 3rd edition, 1990.

\bibitem[Kas90a]{Kas90}
M.~Kashiwara.
\newblock The flag manifold of {Kac}-{Moody} {Lie} algebra.
\newblock {\em Lie algebra, algebraic analysis, geometry and number theory},
  1990.

\bibitem[Kas90b]{Kas900}
M.~Kashiwara.
\newblock {Kazhdan-Lusztig} conjecture for a symmetrizable {Kac-Moody Lie}
  algebra.
\newblock In {\em The Grothendieck Festschrift II}, pages 407--433.
  Birkh\"auser, 1990.

\bibitem[Kas00]{Kas00}
M.~Kashiwara.
\newblock {\em {D-modules and Microlocal Calculus}}.
\newblock Translations of mathematical monographs 217. A.M.S., 2000.

\bibitem[Kem77]{Kem76}
G.~Kempf.
\newblock {\em The geometry of homogeneous space versus induced representations
  of reductive groups}.
\newblock Johns Hopkins University Press, 1977.

\bibitem[Kem93]{Kem93}
G.~Kempf.
\newblock {\em Algebraic Varieties}.
\newblock Cambridge University Press, 1993.

\bibitem[KT95]{KT95}
M.~Kashiwara and T.~Tanisaki.
\newblock {Kazhdan-Lusztig conjecture for affine Lie algebras with negative
  level}.
\newblock {\em Duke Math. J.}, 77(1):21--62, 1995.

\bibitem[KT96]{KT96}
M.~Kashiwara and T.~Tanisaki.
\newblock {Kazhdan-Lusztig} conjecture for affine {Lie} algebras with negative
  level {II}: nonintegral case.
\newblock {\em Duke Math. J.}, 84(3), 1996.

\bibitem[Kum02]{Kum02}
S.~Kumar.
\newblock {\em Kac-Moody groups, their Flag Varieties and Representation
  Theory}, volume 204 of {\em Progress in Mathematics}.
\newblock Birkh\"auser, 2002.

\bibitem[Rid10]{Rid10}
D.~Ridout.
\newblock $sl(2)_{-1/2}$ and the triplet model.
\newblock {\em Nuclear Physics B}, 835:314--342, 2010.

\bibitem[Rid11]{Rid11}
D.~Ridout.
\newblock Fusion in fractional level $\widehat{\mathfrak{sl}}(2)$-theories with
  $k=-\frac{1}{2}$.
\newblock {\em Nuclear Physics B}, 848:216--250, 2011.

\bibitem[RW15]{RW15}
D.~Ridout and S.~Wood.
\newblock Relaxed singular vectors, {Jack} symmetric functions and fractional
  level $\widehat{sl}(2)$ models.
\newblock {\em Nuclear Physics B}, 894:621--664, 2015.

\bibitem[Sha10]{Sha10}
P.~Shan.
\newblock {\em Bases canoniques et graduations associ\'ees aux alg\`ebres de
  Hecke doublement affines rationnelles}.
\newblock PhD thesis, Universit\'e Paris Diderot, 2010.

\bibitem[SS97]{SS97}
A.M. Semikhatov and V.A. Sirota.
\newblock Embedding diagrams of {$N=2$} {Verma} modules and relaxed
  $\widehat{sl}(2)$ {Verma} modules.
\newblock arXiv:hep-th/9712102, 1997.

\bibitem[SVV14]{SVV14}
P.~Shan, E.~Vasserot, and M.~Varagnolo.
\newblock {Koszul} duality of affine {Kac}-{Moody} algebras and cyclotomic
  rational double affine {Hecke} algebras.
\newblock {\em Advances in Mathematics}, 262:370--435, 2014.

\end{thebibliography}

\end{document}